\newtheorem{theorem}{Theorem}[section]
\newtheorem{proposition}[theorem]{Proposition}
\newtheorem{lemma}[theorem]{Lemma}
\newtheorem{definition}[theorem]{Definition}
\newtheorem{remark}[theorem]{Remark}
\newtheorem{example}[theorem]{Example}
\numberwithin{equation}{section}
\newcommand\fractional[1]{\left\{#1\right\}}
\newcommand\floor[1]{\lfloor{#1}\rfloor}%\newcommand{\p}{{\mathfrak{p}}}
\newcommand{\uu}{{\mathfrak{u}}}
\newcommand{\lp}{{\mathfrak{l}}}
\newcommand{\C}{{\mathbb C}}
\newcommand{\R}{{\mathbb R}}
\newcommand{\Z}{{\mathbb Z}}
\newcommand{\CA}{{\mathcal A}}
\newcommand{\CF}{{\mathcal F}}
\newcommand{\CH}{{\mathcal H}}
\newcommand{\CN}{{\mathcal N}}
\newcommand{\CP}{{\mathcal P}}
\newcommand{\CW}{{\mathcal W}}
\newcommand{\Cone}{\operatorname{Cone}}
\newcommand{\Id}{\operatorname{Id}}
\renewcommand{\mod}{\operatorname{mod}}
\newcommand{\Sym}{\operatorname{Sym}}
\newcommand{\Res}{\operatorname{Res}}
\newcommand{\SL}{\operatorname{SL}}
\newcommand{\SU}{\operatorname{SU}}
\newcommand{\U}{\operatorname{U}}
\renewcommand{\a}{{\mathfrak{a}}}
\renewcommand{\c}{{\mathfrak{c}}}
\newcommand{\g}{{\mathfrak{g}}}
\renewcommand{\k}{{\mathfrak{k}}}
\renewcommand{\l}{{\mathfrak{l}}}
\newcommand{\z}{{\mathfrak{z}}}
\renewcommand{\t}{{\mathfrak{t}}}
\renewcommand{\u}{{\mathfrak{u}}}
\newcommand{\la}{\langle}
\newcommand{\ra}{\rangle}
\let\@wraptoccontribs\wraptoccontribs
\begin{document}
\title{Computation of dilated Kronecker coefficients}
\renewcommand\rightmark{\MakeUppercase{Computation of dilated Kronecker coefficients}}
\author{V. Baldoni}
\address{Velleda Baldoni: Dipartimento di Matematica, Universit\`a degli studi di Roma ``Tor Vergata'', Via della ricerca scientifica 1, I-00133, Italy}
\email{baldoni@mat.uniroma2.it}
\author{M. Vergne}
\address{Mich\`ele Vergne: Universit\'e Paris 7 Diderot, Institut Math\'ematique de
Jussieu, Sophie Germain, case 75205, Paris Cedex 13}
\email{vergne@math.jussieu.fr}
\contrib[with an appendix by]{Michael Walter}
%\author{M. Walter}
\address{Michael Walter: Stanford Institute for Theoretical Physics, Stanford University, Stanford, California 94305, USA}
\email{michael.walter@stanford.edu}
\date{}
%%%%%%%%%%%%%%%%%%%%%%%%%%%%%%%%%%%%%%%%%%%%%%%%%%%%%%%%%%%%%%%%%%%%%

%\keywords{Denumerants, Ehrhart quasi-polynomials, partitions, polynomial-time algorithm}

\begin{abstract}
The computation of Kronecker coefficients is a challenging problem with a variety of applications.
In this paper we present an approach based on methods from symplectic geometry and residue calculus.
We outline a general algorithm for the problem and then illustrate its effectiveness in several interesting examples.
Significantly, our algorithm does not only compute individual Kronecker coefficients, but also symbolic formulas that are valid on an entire polyhedral chamber.
As a byproduct, we are able to compute several Hilbert series.
\end{abstract}

\maketitle
\tableofcontents

%%%%%%%%%%%%%%%%%%%%%%%%%%%%%%%%%%%%%%%%%%%%%%%%%%%%%%%%%%%%%%%%%%%%%
\section{Introduction}
%%%%%%%%%%%%%%%%%%%%%%%%%%%%%%%%%%%%%%%%%%%%%%%%%%%%%%%%%%%%%%%%%%%%%

Let $N=n_1n_2\cdots n_s$ where $n_1,\, n_2,\ldots ,n_s$ are positive integers, and write $\C^N=\C^{n_1} \otimes \C^{n_2} \otimes \cdots  \otimes \C^{n_s} $.
Endow each $\C^{n_i}$ with the usual Hermitian inner product.
Then the group $U(n_1)\times \cdots \times U(n_s)$ acts unitarily on the complex vector space $\C^{N}$ via the exterior tensor product, i.e.\ $(k_1,\ldots ,k_s)(v_1\otimes \cdots\otimes v_s)=k_1v_1\otimes \cdots \otimes k_sv_s.$
By means of this action we obtain a representation of  $U(n_1)\times \cdots \times U(n_s)$ on $\Sym(\C^N)$, the full symmetric algebra of $\C^{N}$.
The aim of this article is to give an algorithm to compute the multiplicity of an irreducible representation of $U(n_1)\times \cdots \times U(n_s)$ in $\Sym(\C^N)$.
As a corollary, we obtain an algorithm to compute the corresponding Hilbert series.

The algebra $\Sym(\C^N)$ has a natural grading by degree.
For notational convenience we let $\Sym^c(\C^N)$ denote the space of symmetric tensors of degree $c$, so we have $\Sym(\C^N)=\bigoplus_{c=0}^{\infty}\Sym^c(\C^N).$
The action of $U(n_1)\times \cdots \times U(n_s)$ provides a refinement of this decomposition, namely,
\begin{equation}\label{eq:kron sym}
  \Sym(\C^N) = \bigoplus_{\nu_1,\dots,\nu_s} g(\nu_1,\nu_2,\ldots, \nu_s) \, V^{U(n_1)}_{\nu_1}\otimes \cdots \otimes V^{U(n_s)}_{\nu_s},
\end{equation}
where the $V_{\nu_j}^{U(n_j)}$ are polynomial representations of $U(n_j)$ indexed by their highest weights $\nu_j$, which we can identify with Young diagrams that have no more than $n_j$ rows.
The content of a Young diagram is the number of its boxes, and we denote the content of the Young diagram $\nu$ by $\lvert\nu\rvert$.
If one considers the action of the center of $U(n_j)$, one sees that all diagrams $\nu_1,\dots,\nu_s$ such that $V^{U(n_1)}_{\nu_1}\otimes \cdots \otimes V^{U(n_s)}_{\nu_s}$ occurs in $\Sym^c(\C^N)$ have the same content $c$.
Thus the decomposition~\eqref{eq:kron sym} indeed provides a refinement of the grading of the symmetric algebra by degree.
Moreover, the diagrams $\nu_1,\dots,\nu_s$ all index irreducible representations $\pi_{\nu_j}$ of the symmetric group $\mathfrak S_c$.
By Schur-Weil duality, $g(\nu_1,\dots,\nu_s)$ is thus also equal to the multiplicity of the trivial representation of the symmetric group $\mathfrak S_c$ in the tensor product $\pi_{\nu_1}\otimes \cdots \otimes \pi_{\nu_s}$.

The numbers $g(\nu_1,\nu_2,\ldots,\nu_s)$ are called the \emph{Kronecker coefficients}.
The functions $k \mapsto g(k\nu_1,k\nu_2,\ldots,k\nu_s)$, $k\in \{0,1,2,\ldots\}$, are known as the \emph{dilated Kronecker coefficients} or \emph{stretching function} indexed by the Young diagrams $\nu_1,\dots,\nu_s$.
It follows from the $[Q,R]=0$ theorem, obtained by Meinrenken-Sjamaar~\cite{M-S}, that each dilated Kronecker coefficient is given by a \emph{quasi-polynomial function} for $k\geq1$.
In fact, there exist a decomposition of the space of $s$-tuples $(\nu_1,\dots,\nu_s)$ of Young diagrams with $n_j$ rows into polyhedral convex cones, each on which the function $g(\nu_1,\dots,\nu_s)$ is a quasi-polynomial.
We recall the general definition of a quasi-polynomial function and the precise quasi-polynomiality results in Section~\ref{subsec:quasipol}.
The Kronecker coefficients have a long history in representation theory and algebraic combinatorics.
% In addition, the dilated Kronecker coefficients fully describe the Duis\-ter\-maat-Heck\-man measure on the corresponding Kirwan cone, and thereby the Kirwan cone itself~\cite{Heck}.
More recently, new and important applications have arisen in quantum information and geometric complexity theory and much recent work on Kronecker coefficients has been motivated by this connection.
In both fields, computational methods are required for experimentation and checks of mathematical hypotheses.

\medskip

In this paper we shall present an algorithm that computes the function $g(\nu_1,\nu_2,\ldots ,\nu_s)$ locally.
More precisely, given as input a fixed $s$-tuple $(\nu_1^0,\nu_2^0,\ldots ,\nu_s^0)$, our algorithm produces a \emph{symbolic} function which coincides with $g(\nu_1,\nu_2,\ldots, \nu_s)$ in a conic neighborhood of this fixed $s$-tuple.
In particular, this allows us to symbolically compute the dilated Kronecker coefficients $k\mapsto g(k\nu_1,k\nu_2,\ldots ,k\nu_s)$ as a quasi-polynomial function of $k$.
A \textsc{Maple} implementation of our algorithm is available at~\cite{Kroneckerwebpage}.

We now describe briefly our approach.
By definition, the Kronecker coefficient $g(\nu_1,\dots,\nu_s)$ amounts to a branching problem, namely it is equal to the multiplicity of an irreducible representation of $U(n_1)\times\dots\times U(n_s)$ in $\Sym^c(\C^N)$, which is an irreducible representation of $U(N)$.
For $s=2$, the decomposition of $\Sym(\C^{n_1}\otimes\C^{n_2})$ is known and given by the Cauchy formula (see Lemma~\ref{lem:Cauchy formula}).
For $s>2$, we can use the Cauchy formula to reduce the number of factors $s$ by one.
Without loss of generality, we may assume that $n_1 = \max n_j$ and $n_1\leq M=n_2\cdots n_s$ (see Lemma~\ref{lem:Cauchy reduction}).
Using the Cauchy formula, we can write
$$\Sym(\C^N)=\Sym(\C^{n_1}\otimes \C^M)=\bigoplus_{\nu_1} V_{\nu_1}^{U(n_1)} \otimes V_{\tilde\nu_1}^{U(M)}$$
where $\nu_1$ indexes the finite dimensional irreducible polynomial representations of $U(n_1)$ and $\tilde{\nu_1}$ denotes the polynomial representation of $U(M)$ obtained from $\nu_1$ by adding more zeros on the right.
Now consider the homomorphism from $K=U(n_2)\times\cdots\times U(n_s)$ to $G=U(M)$ given by the tensor product.
If we restrict an irreducible representation $V^{U(M)}_{\tilde\nu_1}$ of $G$ to $K$, we obtain a decomposition
$$V_{\tilde\nu_1}^{U(M)}=\bigoplus_{\nu_2,\dots,\nu_s} m_{G,K}(\tilde{\nu_1}, \nu_2\otimes\cdots \otimes\nu_s) \, V_{\nu_2}^{U(n_2)}\otimes \cdots \otimes V_{\nu_s}^{U(n_s)},$$
where $m_{G,K}(\tilde{\nu_1}, \nu_2\otimes\cdots \otimes\nu_s)$ is the \emph{branching coefficient} computing the multiplicity of the representation $V_{\nu_2}^{U(n_2)}\otimes\cdots\otimes V_{\nu_s}^{U(n_s)}$ in the restriction. % of the irreducible representation $V_{\tilde{\nu_1}}^G$ to $K=U(n_2)\times U(n_3)\times \cdots \times U(n_s)$.
By comparing with \eqref{eq:kron sym}, we obtain
\begin{equation}\label{eq:kron via branching}
g(\nu_1,\ldots, \nu_s)=\begin{cases}
m_{G,K}(\tilde{\nu_1}, \nu_2\otimes\cdots \otimes\nu_s) & \text{if $\lvert\nu_1\rvert=\cdots=\lvert\nu_s\rvert$}, \\
0 & \text{otherwise.}
\end{cases}
\end{equation}
Thus the objective of the paper is to give an explicit formula for the branching coefficient and to propose an algorithm that implements it.

We state our explicit formula for $m_{G,K}(\lambda,\mu)$ in Theorem~\ref{theo:branchingsingular} in the more general context of branching rules.
Our starting point is the notion of a \emph{Jeffrey-Kirwan residue}, which allows us to produce a symbolic function of $(\lambda,\mu)$ that coincides with $m_{G,K}(\lambda,\mu)$ in a conic neignborhood of a given pair $(\lambda^0,\mu^0)$.
This problem is part of the more general problem of symbolically computing multiplicities.
In addition, we use the results of Meinrenken-Sjamaar~\cite{M-S} on the piecewise quasi-polynomial behavior of multiplicity functions in order to justify our method.
Our algorithm then is based on a variation of the Kostant formula for weight multiplicities~\cite{Kos}, together with the iterated residues formula for computing partition functions due to Szenes-Vergne~\cite{SzeVer}.

\medskip

Some of the results herein were presented for the first time at the conference on `Quantum Marginals' at the Isaac Newton Institute in Cambridge~\cite{Vergne-cambridge}.
The preprint~\cite{BV2015} is an extended version of the talk of the second author at this conference and is not intended to be published.
However, it may be interesting to the reader to consult that paper, since various aspects of the theory that come into play (Hamiltonian geometry, convexity, quasi-polynomial behavior, Jeffrey-Kirwan residues) are presented in~\cite{BV2015}, together with an extended bibliography.
In contrast, our focus in the present article is on the application of our general methods for computing branching coefficients to the particularly challenging case of Kronecker coefficients.

Motivated by discussions with M.~Walter, we consider also here the important special case of rectangular Kronecker coefficients, and adapted our algorithm for this case.
A new highly optimized Maple code,  \cite{Kroneckerwebpage},  was then written by M.~Walter and it is presented here in the Appendix.
%....................................................................
\subsection{Examples}
%....................................................................

Throughout the text we have included many examples.
Some of them are known and show the consistency of our results with other techniques of computation.
In addition, we give several new examples to show the power, and the limitations, of our computational approach.

For instance, in Section~\ref{exa:632} we consider the case of $n_1=6,n_2=3,n_3=2$.
Then, given as input a point $(\alpha^0,\beta^0,\gamma^0)$, where $\alpha^0$ is a Young diagram with  $6$ rows, $\beta^0$  with $3$ rows and $\gamma^0$ with $2$ rows, we effectively compute the Kronecker coefficient $g(\alpha,\beta,\gamma)$ symbolically in a conic neighborhood of the given $(\alpha^0,\beta^0,\gamma^0)$.
We also compute the dilated Kronecker coefficient $g(k\alpha,k\beta,k \gamma)$ with $\alpha,\beta,\gamma$ Young diagrams with $3$ rows.
For general $\alpha,\beta,\gamma$ with $3$ rows, we obtain a quasi-polynomial of degree $11$, with coefficients that are periodic functions of period at most $12$ in $k$ (see Section~\ref{C333}).
When $\alpha,\beta,\gamma$ are special, the degree might be much smaller.
We refer to Section~\ref{subsec:dilated} for several other examples of dilated Kronecker coefficients.

Rectangular Kronecker coefficients, i.e.\ those involving partitions of rectangular shape, are of special interest because of their direct relation with invariant theory:
If we consider the group $U(n_1)\times U(n_2)\times\cdots\times U(n_s)$ acting on $\C^{n_1}\otimes\C^{n_2}\otimes\cdots \otimes\C^{n_s}$ as well as characters $\chi_j=[p_j,\ldots,p_j]$ for $j=1,\dots,s$ with equal content $c=p_jn_j$, then
$g(k\chi_1,k\chi_2,\ldots, k\chi_s)$
is equal to
\[ \dim \left[ \Sym^{kc}\bigl( \C^{n_1}\otimes\C^{n_2}\otimes\cdots\otimes\C^{n_s}\bigr) \right]^{\SL(n_1)\times SL(n_2)\times\cdots\times SL(n_s)}. \]
Thus the power series
$R(t)=\sum_{k=0}^{\infty} g(k\chi_1,k\chi_2,\ldots, k\chi_s) t^{k}$
is the \emph{Hilbert series} of the ring of invariant polynomials under the action considered (see Example~\ref{Hilbertseries}).
Particularly challenging examples are the Hilbert series for $5$ qubits given in~\cite{LUTHI} and the Hilbert series of measures of entanglement for $4$ qubits computed in~\cite{Nolan}.
We compute these (and correct a misprint in~\cite{LUTHI}) using our techniques in Section~\ref{subsec:HS}, see in particular Section~\ref{Wallach.ex}.
We recently learned of a different, yet-unpublished, approach of M.~Grassl and R.~Zeier for computing Hilbert series, which is more efficient than our method for the $5$ qubits example (cf.~\cite{Grassl,Zeier}).
We conclude with two explicit examples, first a classical one~\cite{Kac,Vinberg,Kac-Vinberg}, then a new one.

\begin{example}
The dilated Kronecker coefficient
\[ m(k) = g(k[1,1,1],k[1,1,1],k[1,1,1]) \]
%  for $\chi=[1,1,1]$,
corresponds to the Hilbert series of the ring of invariants for the action of $SL(\C^3)\times SL(\C^3)\times SL(\C^3)$ on $\C^3\otimes \C^3\otimes \C^3$.
An efficient way to represent periodic functions is by using step polynomials (see~\cite{BBDKV,Verdo}).
In this representation, $m(k)$ is given by the following quasi-polynomial:
\begin{align*}
m(k) &= 1-\frac{3}{2}\fractional{\frac{1}{3}k}+\frac{3}{2}\fractional{\frac{1}{3}k}^2-\frac{3}{2}\fractional{\frac{1}{2}k}-\fractional{\frac{3}{4}k}^2 \\
&+\fractional{\frac{3}{4}k}\fractional{\frac{1}{2}k}+\fractional{\frac{1}{2}k}^2 +\left(\frac{1}{4}-\frac{1}{4}\fractional{\frac{1}{2}k}\right)k+\frac{1}{48}k^2.
\end{align*}
Here we write $\fractional{s}=s-\floor{s} \in [0,1)$ for $s\in\R$, where $\floor{s}$ denotes the largest integer smaller or equal to $s$.
It is easy to check that the corresponding Hilbert series is given by
$$\sum_{k=0}^{\infty} m(k)t^k=\frac{1}{(1-t^2)(1-t^3)(1-t^4)}.$$
Thus the result of our algorithm agrees with the determination of the ring of invariants, $[\bigoplus_{k=0}^{\infty}\Sym^{3k}(\C^3\otimes \C^3\otimes \C^3)]^{SL(\C^3)\times SL(\C^3)\times SL(\C^3)}$, which is known to be freely generated with generators in degree $2,3,4$.  % (see \cite{MAN}).
\end{example}

\begin{example}
Another example is
\begin{align*}
  m(k) &= g(k[3,3,3,3],k[4,4,4],k[4,4,4]) \\
  &=\dim \left[\Sym^{12 k}(\C^4\otimes \C^3\otimes \C^3) \right]^{SL(\C^4)\times SL(\C^3)\times SL(\C^3)}.
\end{align*}
Our algorithm gives the Hilbert series
$$\sum_{k=0}^{\infty} m(k)t^k={\frac {1+{t}^{9}}{  \left( 1-t\right) \left( 1-{t}^{2} \right) ^{2}  \left(1- {t}^{3} \right)  \left(1- {t}^{4} \right)}}.$$
\end{example}

%....................................................................
\subsection{Outline of the article}
%....................................................................

In Section~\ref{sec:setup}, we consider the action of a connected compact Lie group $K$ on an finite-dimensional Hermitian space $\CH$ and the corresponding decomposition
$$\Sym(\CH)=\sum_\lambda m_K^{\CH}(\lambda) \, V_\lambda^K.$$
We also introduce the Kirwan cone, which is the asymptotic support of the multiplicity function $\lambda\mapsto m_K^{\CH}(\lambda)$.
The Kirwan cone is a polyhedral cone.
Each facet (that is, a face of codimension one) of this cone generates a hyperplane that we call a wall of $C_K(\CH)$.

In Section~\ref{sec:multiplicities}, we define the notions of topes and Orlik-Solomon bases.
We discuss quasi-polynomial functions and recall the Szenes-Vergne iterated residue formula for the piecewise quasi-polynomial function $m_K^{\CH}(\lambda)$ in the case that $K$ is a torus.
We then discuss the Meinrenken-Sjamaar theorem on the piecewise quasi-polynomial behavior of $m_K^{\CH}(\lambda)$, when $K$ is a compact connected Lie group.
This theorem is important for our present work, since it allows us to compute $m_K^\CH(\lambda)$ by a deformation argument.
We also state some corollaries on the degrees of the functions $k\mapsto m_K^{\CH}(k\lambda)$ on the interior of the Kirwan cone, as well as on its faces.

In Section~\ref{sec:branch}, we then consider the general branching problem:
Given a homomorphism $K\to G$ and an irreducible representation $V_\lambda^G$ of $G$, decompose it into irreducible representations of $K$.
That is, $V_\lambda^G=\bigoplus m_{G,K}(\lambda,\mu) V_\mu^K$.
In Theorem~\ref{theo:mGK}, we recall some of the qualitative properties of the function $m_{G,K}(\lambda,\mu)$ implied by the Meinrenken-Sjamaar theorem.
This allows us to likewise compute $m_{G,K}(\lambda,\mu)$ by deformation arguments.
We discuss the regions where $m_{G,K}(\lambda,\mu)$ is given by a quasi-polynomial function.
Our main result is the general branching formula in Theorem~\ref{theo:branchingsingular}.

In Section~\ref{sec:examples}, we list several explicit examples of dilated Kronecker coefficients and of Hilbert series computed using our method.

In Appendix~\ref{app:thealgoKro}, we summarize the algorithm for computing Kronecker coefficients in detail.
As discussed before, we in essence use the Kostant multiplicity formula, together with the method of iterated residues, to compute $m_{G,K}(\lambda,\mu)$ using Theorem~\ref{theo:branchingsingular}.
When the stabilizer of $\lambda$ in the Weyl group of $G$ is large, we explain how to take advantage of this situation.
Similarly as in~\cite{C2,C-D-W}, our method for computing $g(k\lambda_1,k\lambda_2,\dots,k\lambda_s)$ is effective for small $n_1,n_2,\dots,n_s$. %, largely independently but does not depend on the size of the $\lambda_i$.
More precisely, if we fix the number $n_i$ of rows then the algorithm runs in time polynomial in the input size (and, in practice, relatively quick).

%%%%%%%%%%%%%%%%%%%%%%%%%%%%%%%%%%%%%%%%%%%%%%%%%%%%%%%%%%%%%%%%%%%%%
\section{Towards a multiplicity formula: the setup}\label{sec:setup}
%%%%%%%%%%%%%%%%%%%%%%%%%%%%%%%%%%%%%%%%%%%%%%%%%%%%%%%%%%%%%%%%%%%%%

We start by introducing some general notation.
Let  $K$ be a compact connected Lie group and $T_K$ a maximal torus of $K$.
We denote by $\k $ and $\t_\k$ the corresponding Lie algebras.
We use the superscript $^*$ for dual spaces and the subscript $_\C$ for complexifications; for example, we write $\t_\k^*$ and $\k_\C$.
We denote by $\CW_\k$ the Weyl group and by $w\mapsto \epsilon(w)=\det_{\t_\k} w$ its determinant representation.
The weight lattice $\Lambda_K$ of $T_K$ is a lattice in $i\t_\k^*$, $i=\sqrt{-1}$.
Any $\lambda\in \Lambda_K$ determines a one-dimensional representation of $T_K$ by $t\mapsto e^{\langle \lambda,X\rangle}$, with $t=\exp X, \ X\in \t_\k$.
As $\lambda$ takes imaginary values on $\t_\k$, $e^{\langle \lambda,X\rangle}$ is of modulus $1$.
We denote by $\Gamma_K\subset i\t_\k$ the dual lattice of $\Gamma_K$.
It consists of all elements $\gamma\in\Gamma_K$ such that, for all $\lambda\in \Lambda_K$, $\la \lambda,\gamma\ra$ is an integer.
Let $\Delta_\k\subset i\t_\k^*$ be the root system of $\k$ with respect to $\mathfrak t_\k$.
If $\alpha\in \Delta_\k$, its coroot $H_\alpha$ is in $i\t_\k$ and satisfies $\la \alpha,H_\alpha\ra=2$.
Fix a positive system $\Delta_\k^+$ for $\Delta_\k$, and let
$$i\t^*_{\k,\geq 0}=\left\{\xi, \langle \xi, H_\alpha\rangle \geq 0, \alpha \in \Delta_\k^+\right\}$$
denote the corresponding positive Weyl chamber.
Let $\rho_\k=\frac12\sum_{\alpha\in \Delta_\k^+} \alpha$ denote half the sum of the positive roots.
We write $\Lambda_{K,\geq 0}$ for the semigroup of dominant weights, that is, the set $\Lambda_K\cap i\t^*_{\k,\geq 0}$. % (it is pointed if $K$ is semisimple).
We can parameterize the set of classes of irreducible finite-dimensional representations of $K$ by $\Lambda_{K,\geq 0}$:
given $\lambda \in \Lambda_{K,\geq 0}$, we denote by $V_\lambda^K$ the corresponding irreducible representation of $K$ with highest weight $\lambda$.
We denote by $\lambda^*$ the highest weight of the contragredient representation, $V_{\lambda^*}^K = (V_\lambda^K)^*$.
A dominant weight $\lambda$ is \emph{regular} if $\la\lambda,H_\alpha\ra\neq 0$ for all roots $\alpha\in \Delta_\g$.
Otherwise, we say that $\lambda$ is \emph{singular}.

When the group $K$ is understood, we abbreviate $\Lambda_K$ by $\Lambda$, $T_K$ by $T$, $\t_\k$ by $\t$, etc.
Finally, when dealing with several groups $K,G,\dots$, as in Section~\ref{sec:branch}, then we will use Fraktur letters $\k,\g,\dots$ for the Lie algebras and subscripts to distinguish the corresponding objects, e.g., $\Lambda_K,\Lambda_G,\t_\k,\t_\g,\ldots$.

\medskip

Since the unitary groups are important for the definition of the Kronecker coefficients in Eq.~\eqref{eq:kron sym}, we briefly pause to explain the notation just introduced for $K=U(n)$, the group of unitary $n\times n$ matrices.
We choose $T_{U(n)}\subset K$ as the set of diagonal unitary matrices.
Then the Lie algebra $\u(n)$ consists of the $n\times n$ anti-Hermitian matrices and $i\u(n)$ is the space of Hermitian matrices.
If we identify $\u(n)$ and $\u(n)^*$ via the bilinear form ${\rm Tr}(AB)$ then $\t_{\u(n)}=\t_{\u(n)}^*$  is the set of diagonal anti-Hermitian matrices.
The positive Weyl chamber is $i\t^*_{\u(n),\geq 0}=\{\xi=[\xi_1,\xi_2,\ldots, \xi_n]\}$ with $\xi_j\in \R$ and $\xi_1\geq \xi_2\geq \cdots \geq \xi_n$, where $\xi$ represents the Hermitian matrix with diagonal entries $\xi_1,\dots,\xi_n$.
The weight lattice is $\Lambda_{U(n)}=\{\lambda=[\lambda_1,\lambda_2,\ldots, \lambda_n]\}$ with $\lambda_j\in \Z$ and we write $\Gamma_{U(n)}\subset i\t$ for the dual lattice.
The semigroup of dominant weights is $\Lambda_{U(n),\geq 0}=\{\lambda=[\lambda_1,\lambda_2,\ldots, \lambda_n]\}$ with $\lambda_j\in \Z$ and $\lambda_1\geq \lambda_2\geq \cdots \geq \lambda_n$.

If  $\lambda \in\Lambda_{U(n),\geq 0}$ is such that  $\lambda_n\geq 0$, then $\lambda$ indexes a finite-dimensional irreducible \emph{polynomial} representation of $GL(n,\C)$.
The corresponding subset of $\Lambda_{U(n),\geq 0}$ will be denoted by $P\Lambda_{U(n),\geq 0}$ ($P$ for polynomial).
In this case, we may also identify $\lambda$ with a \emph{Young diagram} with no more than $n$ rows ($\lambda_j$ denotes the length of the $j$-th row).
As in the introduction, the \emph{content} $\lvert\lambda\rvert$ of the corresponding diagram is the number of boxes, that is, $\lvert\lambda\rvert=\sum_j\lambda_j$.
The dominant weight $[k,k,\ldots,k]$, corresponding to a \emph{rectangular} Young diagram with $n$ rows and $k$ columns, indexes the one-dimensional representation $g\mapsto\det(g)^k$ of $U(n)$.

Given $N\geq n$, there is a natural injection from $P\Lambda_{U(n),\geq 0}$ to $P\Lambda_{U(N),\geq 0}$ obtained by extending $\lambda$ with zeros on the right, and we denote by $\tilde{\lambda} = (\lambda,0,\dots,0)$ the highest weight so obtained.

Lastly, we similarly define the notation $Pi\t^*_{\u(n)}=\{\xi \in i\t^*_{\geq 0}, \xi_n\geq 0\}$, $\lvert\xi\rvert=\sum_j\xi_j$, and $\tilde{\xi}=(\xi,0,\dots,0)$ for $\xi\in Pi\t^*_{\u(n)}$.

\medskip

Let $\CH$ be a finite dimensional Hermitian vector space provided with a representation of $K$ by unitary transformations.
Assume (temporarily) that $K$ contains the subgroup of homotheties $\{e^{i\theta}\Id_\CH\}$.
Consider $\Sym(\CH)$, the space of symmetric tensors, so we have
\begin{equation}\label{eq:multiplicities}
  \Sym(\CH)=\bigoplus_{\lambda\in \Lambda_{K,\geq 0}} m_K^{\CH}(\lambda) \, V_\lambda^K,
\end{equation}
where $m_K^{\CH}(\lambda),$  the multiplicity of $V_\lambda^K$ in  $\Sym(\CH),$ is finite.
We also write  $$\Sym(\CH)=\bigoplus_{\mu} m_{T_K}^{\CH}(\mu) \, e^{\mu},$$  where $m_{T_K}^{\CH}(\mu)$ is the multiplicity of the weight $\mu.$
The multiplicities for $K$ and $T_K$ will be related in Lemma~\ref{lem:KandTmult}.
% \begin{equation}\label{test}
%  m_K^{\CH}(\lambda)=\sum_{w\in \CW_\k} \epsilon(w) m_{T_K}^{\CH}(\lambda+\rho_\k-w(\rho_\k)). \end{equation}

Before going on, we give two examples.
The first one is the main example we will be interested in this paper and the second is related to the computation of Hilbert series.

\begin{example}[Kronecker coefficients]\label{ex:kron}
  Consider $\CH=\C^{n_1}\otimes \cdots\otimes \C^{n_s}$ with the exterior tensor product action of $K=U(n_1)\times \cdots \times U(n_s)$.
  Given Young diagrams $\nu_j\in P\Lambda_{U(n_j),\geq0}$, $j=1,\dots,s$, the $s$-tuple $\lambda=(\nu_1,\dots,\nu_s)$ is a highest weight for $K$.
  The Kronecker coefficients~\eqref{eq:kron sym} are given by
  $g(\nu_1,\nu_2,\ldots, \nu_s) = m_K^\CH(\lambda).$
\end{example}

The case $s=2$ is well-known:

\begin{lemma}[Cauchy formula, {\cite[p.~64]{KP}}]\label{lem:Cauchy formula}
  Let $N, n$ be positive integers, and assume that $N\geq n$.
  Consider $\CH=\C^n\otimes \C^N$ with the action of $K=U(n)\times U(N)$.
  Then the decomposition of $\Sym(\C^{n}\otimes \C^N)$ with respect to $\U(n)\times U(N)$ is given by the \emph{Cauchy formula}
  \begin{equation}\label{exa:Cauchy formula}
    \Sym(\C^{n}\otimes \C^N)=\bigoplus_{\nu_1 \in P\Lambda_{U(n),\geq 0}}V_{\nu_1}^{U(n)}\otimes V_{\tilde\nu_1}^{U(N)}.
  \end{equation}
  In other words, $g(\nu_1,\nu_2)=\delta_{\tilde\nu_1,\nu_2}$.
\end{lemma}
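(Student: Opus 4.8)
The plan is to realize $\Sym(\C^n\otimes\C^N)$ as a $\GL(n,\C)\times\GL(N,\C)$-module and then decompose it using classical polynomial functor identities, finally restricting back to the compact forms $U(n)\times U(N)$. First I would identify $\C^n\otimes\C^N$ with the space of $n\times N$ matrices $M_{n,N}(\C)$, so that $\Sym(\C^n\otimes\C^N)$ is the polynomial ring $\C[M_{n,N}(\C)]$ on this space, with $\GL(n,\C)\times\GL(N,\C)$ acting by $(g,h)\cdot X = gXh^t$ (or $gXh^{-1}$, depending on conventions). Since the compact group $U(n)\times U(N)$ is Zariski-dense in $\GL(n,\C)\times\GL(N,\C)$, the decomposition of $\Sym(\C^n\otimes\C^N)$ into irreducibles is the same whether we work with the compact or the complex group, and all occurring irreducibles of $U(n)$, $U(N)$ are polynomial.

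The key algebraic input is the Cauchy identity for the symmetric algebra of a tensor product of two vector spaces: as a bimodule,
\begin{equation*}
  \Sym(\C^n\otimes\C^N) \;\cong\; \bigoplus_{\lambda} \S_\lambda(\C^n)\otimes \S_\lambda(\C^N),
\end{equation*}
where the sum runs over all partitions $\lambda$, and $\S_\lambda$ denotes the Schur functor. This is the dual statement to the usual Cauchy formula $\prod_{i,j}(1-x_iy_j)^{-1} = \sum_\lambda s_\lambda(x)s_\lambda(y)$ for symmetric functions, and a clean representation-theoretic proof goes via Schur--Weyl duality: $\Sym^c(\C^n\otimes\C^N) = (\,(\C^n)^{\otimes c}\otimes(\C^N)^{\otimes c}\,)^{\mathfrak S_c}$, where $\mathfrak S_c$ acts diagonally; decomposing each tensor power with Schur--Weyl, $(\C^n)^{\otimes c} = \bigoplus_\lambda \S_\lambda(\C^n)\otimes \pi_\lambda$ and similarly for $\C^N$, and then using $(\pi_\lambda\otimes\pi_\mu)^{\mathfrak S_c} = \delta_{\lambda\mu}$ (Schur's lemma for the symmetric group, since $\pi_\mu^*\cong\pi_\mu$) yields exactly the summands above, with $\lambda$ ranging over partitions of $c$. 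Summing over $c$ gives the full identity.

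Now I would translate this into the stated form. A partition $\lambda$ with at most $n$ rows gives a nonzero $\S_\lambda(\C^n)$, and $\S_\lambda(\C^n) = V_{\nu_1}^{U(n)}$ with $\nu_1 = \lambda \in P\Lambda_{U(n),\geq0}$. The same $\lambda$, viewed as a partition with at most $N$ rows (using $N\geq n$), gives $\S_\lambda(\C^N) = V_{\tilde\nu_1}^{U(N)}$, since extending $\nu_1$ by zeros on the right is precisely the passage from a diagram with $\le n$ rows to the same diagram regarded inside $\le N$ rows. If $\lambda$ has more than $n$ rows, then $\S_\lambda(\C^n) = 0$ and the term drops out. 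Hence the surviving index set is exactly $\{\nu_1 \in P\Lambda_{U(n),\geq 0}\}$ and we obtain~\eqref{exa:Cauchy formula}. Comparing with the definition of the Kronecker coefficient in Example~\ref{ex:kron} for $s=2$, namely $g(\nu_1,\nu_2) = m_K^\CH((\nu_1,\nu_2))$, reading off the multiplicity of $V_{\nu_1}^{U(n)}\otimes V_{\nu_2}^{U(N)}$ gives $g(\nu_1,\nu_2) = \delta_{\tilde\nu_1,\nu_2}$.

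The one point requiring a little care — the main obstacle, such as it is — is bookkeeping of conventions: the action on $\C^n\otimes\C^N$ must be arranged (possibly by dualizing one factor, harmless since $\C^N\cong(\C^N)^*$ as $U(N)$-modules up to relabeling, or rather the relevant point is which of $gXh^t$ vs.\ $gXh^{-1}$ one uses) so that \emph{both} factors appear through the same Schur functor $\S_\lambda$ rather than $\S_\lambda\otimes\S_{\lambda^*}$; with the honest symmetric-algebra (polynomial) action this works out, and only polynomial representations occur, matching the $P\Lambda$ constraint in the statement. Everything else is a direct citation of Schur--Weyl duality and the Cauchy identity, so the proof is short; indeed the reference \cite[p.~64]{KP} already records it, and one may simply cite it or reproduce the Schur--Weyl argument above.
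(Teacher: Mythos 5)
The paper does not prove this lemma; it is stated as a classical fact with a citation to Kraft--Procesi. Your reconstruction via Schur--Weyl duality is correct and is the standard argument: $\Sym^c(\C^n\otimes\C^N)=\bigl((\C^n)^{\otimes c}\otimes(\C^N)^{\otimes c}\bigr)^{\mathfrak S_c}$, Schur--Weyl on each factor, and $(\pi_\lambda\otimes\pi_\mu)^{\mathfrak S_c}=\delta_{\lambda,\mu}$ by self-duality of symmetric-group representations. This is in fact the same mechanism the paper already invokes in the introduction when it identifies $g(\nu_1,\dots,\nu_s)$ with $\dim(\pi_{\nu_1}\otimes\cdots\otimes\pi_{\nu_s})^{\mathfrak S_c}$; for $s=2$ that identity collapses to $\delta_{\nu_1,\nu_2}$, which is the content of the lemma. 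Your aside about $gXh^t$ versus $gXh^{-1}$ is a legitimate caution but could be stated more crisply: the paper's action is the exterior tensor action $(k_1,k_2)\cdot(v_1\otimes v_2)=k_1v_1\otimes k_2v_2$, i.e.\ $X\mapsto k_1Xk_2^t$ on matrices, and it is this \emph{polynomial} action (not the conjugation-type $gXh^{-1}$) for which both factors carry the same Schur functor $\S_\lambda$, so only polynomial highest weights appear, matching $P\Lambda_{U(n),\geq0}$.
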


As mentioned in the introduction, the calculation of Kronecker coefficients can be reduced in the following way:

\begin{lemma}\label{lem:Cauchy reduction}
  Let $n_1,\dots,n_s$ be positive integers, and $M=n_2 n_3\cdots n_s$.
  Let $\nu_j \in P\Lambda_{U(n_j),\geq0}$ for $j=1,\dots,s$.
  \begin{enumerate}
  \item[(i)] The Kronecker coefficients are invariant under permuting the Young diagrams: $g(\nu_1,\dots,\nu_s)=g(\nu_{\pi(1)},\dots,\nu_{\pi(s)})$ for $\pi\in\mathfrak S_s$.
  \item[(ii)] If $n_1>M$ then the Kronecker coefficients stabilize in the sense that $g(\nu_1,\dots,\nu_s)$ is non-zero only if $\nu_1=\tilde\eta_1$, where $\tilde\eta_1$ is obtained from an element $\eta_1\in P\Lambda_{U(M),\geq0}$ by adding zeros on the right. In this case,
    $g(\nu_1,\nu_2,\ldots,  \nu_s)=g(\eta_1, \nu_2,\ldots, \nu_s).$
  \end{enumerate}
  As a consequence we may assume that $n_1=\max_j n_j$ and $n_1\leq M$ when computing Kronecker coefficients.
\end{lemma}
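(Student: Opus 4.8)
The plan is to prove the three assertions in order, then combine them for the concluding reduction.

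\textbf{Part (i): Symmetry in the Young diagrams.} The statement $g(\nu_1,\dots,\nu_s)=g(\nu_{\pi(1)},\dots,\nu_{\pi(s)})$ for $\pi\in\mathfrak S_s$ follows directly from the Schur--Weyl description of $g$ recalled in the introduction. Indeed, $g(\nu_1,\dots,\nu_s)$ equals the multiplicity of the trivial representation of $\mathfrak S_c$ in $\pi_{\nu_1}\otimes\cdots\otimes\pi_{\nu_s}$ (equivalently, $\langle \chi_{\nu_1}\cdots\chi_{\nu_s},1\rangle_{\mathfrak S_c}$), and this is manifestly symmetric under permuting the factors since the tensor product of $\mathfrak S_c$-modules is commutative up to isomorphism. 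Alternatively, one can argue geometrically: permuting the tensor factors $\C^{n_i}$ is realized by an isomorphism of $\CH=\C^{n_1}\otimes\cdots\otimes\C^{n_s}$ intertwining the $K$-action with the $K$-action twisted by the corresponding automorphism of $K$, hence it preserves $\Sym(\CH)$ and carries $V_{\nu_1}^{U(n_1)}\otimes\cdots$ to $V_{\nu_{\pi(1)}}^{U(n_{\pi(1)})}\otimes\cdots$; by Example~\ref{ex:kron} this gives the claim.

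\textbf{Part (ii): Stabilization when $n_1>M$.} Here I would use the reduction to branching coefficients recorded in the introduction. Setting $M=n_2\cdots n_s$ and $G=U(M)$, $K'=U(n_2)\times\cdots\times U(n_s)$, we have from Eq.~\eqref{eq:kron via branching} that for $\lvert\nu_1\rvert=\cdots=\lvert\nu_s\rvert=c$, $g(\nu_1,\nu_2,\dots,\nu_s)=m_{G,K'}(\tilde\nu_1,\nu_2\otimes\cdots\otimes\nu_s)$, where now $\tilde\nu_1$ is the polynomial highest weight of $U(n_1)$ to which the Cauchy formula of Lemma~\ref{lem:Cauchy formula} (applied with $n=\min(n_1,M)$, $N=\max(n_1,M)$) associates $\nu_1$. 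The key point is that a polynomial representation of $U(M)$ has highest weight with at most $M$ rows; so if $\nu_1$ has more than $M$ nonzero parts, then $\Sym(\C^{n_1}\otimes\C^M)$ contains no $V_{\nu_1}^{U(n_1)}$-isotypic component and $g$ vanishes. If $\nu_1$ has at most $M$ rows, write $\nu_1=\tilde\eta_1$ with $\eta_1\in P\Lambda_{U(M),\geq0}$; the Cauchy formula identifies $V_{\nu_1}^{U(n_1)}\otimes V_{\eta_1}^{U(M)}$ as the summand of $\Sym(\C^{n_1}\otimes\C^M)$, and matching against~\eqref{eq:kron sym} as in~\eqref{eq:kron via branching} shows $g(\nu_1,\nu_2,\dots,\nu_s)=m_{G,K'}(\eta_1,\nu_2\otimes\cdots\otimes\nu_s)=g(\eta_1,\nu_2,\dots,\nu_s)$, the last equality again by~\eqref{eq:kron via branching} with the roles of $n_1$ replaced by $M$ (and noting the content constraint is preserved since $\lvert\eta_1\rvert=\lvert\nu_1\rvert$). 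I expect this bookkeeping with the ``tilde'' maps and the content constraints to be the main obstacle — one has to be careful that the Cauchy formula is being applied in the correct direction depending on whether $n_1\gtrless M$, and that the stabilized coefficient is genuinely a Kronecker coefficient for the smaller system $(M,n_2,\dots,n_s)$ and not merely a branching coefficient.

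\textbf{Concluding reduction.} Combining (i) and (ii): given any $s$-tuple, use (i) to reorder so that $n_1=\max_j n_j$. If $n_1\leq M=n_2\cdots n_s$ we are done. Otherwise $n_1>M$, so by (ii) the coefficient either vanishes or equals $g(\eta_1,\nu_2,\dots,\nu_s)$ with $\eta_1$ having at most $M$ rows, i.e.\ we have replaced $n_1$ by (at most) $M$. Now $\max(M,n_2,\dots,n_s)\leq M$ trivially — indeed each $n_j\leq M$ for $j\geq 2$ since $M$ is their product (assuming, as we may after discarding trivial factors $n_j=1$, that $s\geq 3$ and all $n_j\geq 2$; the degenerate cases $s\leq 2$ are covered by Lemma~\ref{lem:Cauchy formula}) — so the new first index is already the maximum and is bounded by the product of the others. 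Hence after at most one application of (ii), and a final relabeling via (i), we arrive at a system with $n_1=\max_j n_j$ and $n_1\leq M$, as claimed.
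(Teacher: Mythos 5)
The paper states Lemma~\ref{lem:Cauchy reduction} without proof, so there is no ``paper's own proof'' to compare against; your argument fills this gap and is essentially the intended one. Part~(i) via the Schur--Weyl/$\mathfrak S_c$ description (or the equivariant isomorphism of $\CH$) is exactly right. In part~(ii), the key observation is that when $n_1>M$ the Cauchy formula forces every $U(n_1)$-isotype appearing in $\Sym(\C^{n_1}\otimes\C^M)$ to be of the form $V_{\tilde\eta}^{U(n_1)}$ with $\eta\in P\Lambda_{U(M),\geq0}$, so the coefficient vanishes if $\nu_1$ has more than $M$ nonzero parts; otherwise both sides reduce to the same branching multiplicity $m_{U(M),K'}(\eta_1,\nu_2\otimes\cdots\otimes\nu_s)$, and you correctly re-expand this as the Kronecker coefficient of the smaller system $(M,n_2,\dots,n_s)$ by applying the Cauchy formula once more (now with both factors of size $M$). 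One notational caution: you write $m_{G,K'}(\tilde\nu_1,\cdot)$ using the paper's tilde, but when $n_1>M$ the relevant $U(M)$-weight is the truncation $\eta_1$ of $\nu_1$, not a zero-padding of it; you acknowledge and resolve this in the next sentence, so the argument stands, but the first display is slightly misleading as written. The concluding reduction is fine: after one application of (ii) the first dimension becomes $M=n_2\cdots n_s$, which automatically dominates each $n_j$ ($j\geq2$) and satisfies $n_1'\leq M'$ with equality; your aside about discarding $n_j=1$ factors and treating $s\leq2$ separately is harmless but not strictly necessary, since those degenerate cases pass through the same computation.
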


\begin{example}[Hilbert series]\label{Hilbertseries}
  Let $\k=\z \oplus [\k,\k]$, and assume that the center $\z=\R J$ of $\k$ acts by homotheties on $\CH$.
  Consider an element $\chi\in \Lambda_K$ such that $\chi(iJ)=1$ and $\chi=0$ on $i(\t_\k\cap [\k,\k])$.
  Then,
  \[ m_K^{\CH}(k\chi)=\dim \left[ \Sym^k(\CH) \right]^{[K,K]}, \]
  and it follows that, by definition, the series $R(t) = \sum_{k=0}^\infty m_K^\CH(k\chi) \, t^k$ is the \emph{Hilbert series} of the ring of invariant polynomials under the action of $[K_\C,K_\C]$.
\end{example}

\begin{remark}\label{rem:gorenstein}
  The ring of invariants is a Gorenstein ring, so its Hilbert series is of the form
  $$R(t)=\frac{P(t)}{\prod_{j=1}^N (1-t^{a_j})},$$
  where $P(t)$ is a palindromic polynomial~\cite{Mu}.
  Furthermore, it follows from~\cite{M-S} that the degree of $P(t)$ is strictly less than $\sum_j a_j$.
\end{remark}

The action of $K$ on $\CH$ admits a \emph{moment map} (in the sense of symplectic geometry~\cite{Guillemin})
$\Phi_K:\CH\to i\k^*$ given by
$$\Phi_K(v)(X)=\langle Xv,v\rangle,  v\in \CH, X\in \k,$$
where we note that $\langle Xv,v\rangle$ is purely imaginary since $K$ acts unitarily on $\CH$.
We consider $i\t^*_{\k,\geq 0}$ as a subset of $i\k^*$.
Then the \emph{Kirwan cone} is defined as the intersection of the image of the moment map with the positive Weyl chamber:
\begin{equation}\label{eq:kirwan cone}
  C_K(\CH)=\Phi_K(\CH)\cap i\t^*_{\k,\geq 0}
\end{equation}
Kirwan's convexity theorem implies that $C_K(\CH)$ is a rational polyhedral cone.
The cone $C_K(\CH)$  is related to the multiplicities through the following basic result, which is a particular case of Mumford's theorem~\cite{Mum} (a proof following closely Mumford's argument can be found in~\cite{Be}):

\begin{proposition}[Mumford]\label{prp:mumford}
  If $\lambda\notin C_K(\CH)$ then $m_K^{\CH}(\lambda)=0$.
  % We have $m_K^{\CH}(\lambda)=0$ if $\lambda\notin C_K(\CH)$.
  Conversely, if $\lambda$ is a dominant weight belonging to $C_K(\CH)$, there exists an integer $k>0$ such that $m_K^{\CH}(k\lambda)$  is non-zero.
\end{proposition}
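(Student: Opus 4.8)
The plan is to exploit the Hilbert--Mumford / geometric invariant theory correspondence between representation-theoretic multiplicities and invariants on a suitable cover, together with the fact that the image of the moment map detects semistability. First I would reduce both statements to a semistability criterion for a line bundle. Fix a dominant weight $\lambda$ and form the irreducible $K$-module $V_\lambda^K$; then $m_K^{\CH}(\lambda)$ is the dimension of the space of $K$-equivariant maps $\Sym(\CH)\to V_\lambda^K$, equivalently (after passing to the complexification $G=K_\C$ and using the Borel--Weil realization of $V_\lambda^K$ as sections of a line bundle $\CL_\lambda$ over the flag manifold $G/B$) of the space of $G$-invariant sections of the line bundle $\CL_{-\lambda}$ over $\Sym(\CH^*)\times G/B$, graded by degree in the $\CH^*$-factor. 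Thus $m_K^{\CH}(k\lambda)$ is nonzero for some $k$ precisely when this projective variety with its $G$-linearized line bundle carries a nonzero invariant section, i.e.\ when the semistable locus is nonempty.

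The second step is the Kempf--Ness / Hilbert--Mumford picture. By Mumford's theorem (as in~\cite{Mum}, with the moment-map formulation that a point is semistable for the $(\lambda)$-twisted linearization iff the closure of its $G$-orbit meets the zero level set of the shifted moment map $\Phi_K - \lambda$), the semistable locus is nonempty if and only if $\lambda$ lies in the image of the moment map intersected with the positive chamber, i.e.\ $\lambda \in C_K(\CH)$. Concretely: if $\lambda \in C_K(\CH)$ there is $v \in \CH$ with $\Phi_K(v)$ conjugate to $\lambda$ (using the homothety assumption so that the relevant degree is positive), hence a semistable point, hence a nonzero invariant section in some degree $k>0$, which gives $m_K^{\CH}(k\lambda)\neq 0$. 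Conversely, if $\lambda\notin C_K(\CH)$ then for every $k$ there is no point whose moment-map value is $k\lambda$, hence the semistable locus for each twist is empty, hence $m_K^{\CH}(k\lambda)=0$; in particular $m_K^\CH(\lambda)=0$. The direction ``$\lambda\notin C_K(\CH)\Rightarrow m_K^\CH(\lambda)=0$'' can alternatively be seen directly: a highest weight $\lambda$ occurring in $\Sym(\CH)$ is in particular a weight occurring in $\Sym(\CH)$, and the convex hull of the weights of $\Sym(\CH)$ is dominated by $\Phi_K(\CH)$ by the standard estimate $\Phi_K(v)(X)=\langle Xv,v\rangle$ applied to highest weight vectors, so all occurring dominant weights lie in $C_K(\CH)$.

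The final step is to assemble these into the stated form and check the homothety hypothesis is used correctly: because $K$ contains $\{e^{i\theta}\Id_\CH\}$, the grading of $\Sym(\CH)$ by degree is recovered from the central character, so "$m_K^\CH(k\lambda)\neq 0$ for some $k>0$" is literally the statement that $\CL$ admits an invariant section in a positive graded piece, matching the GIT setup. The main obstacle I expect is the converse direction done carefully: one must pass correctly between the real moment map $\Phi_K$ with values in $i\k^*$, its intersection with the chamber, and the algebraic notion of semistability for the twisted linearization, and verify that asymptotic (rational) membership in the cone $C_K(\CH)$ — rather than integral membership — is exactly what guarantees a nonzero invariant section after dilating $\lambda$ by a suitable integer $k$ to clear denominators. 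This is precisely the content of Mumford's argument as reproduced in~\cite{Be}, so I would cite that for the delicate semistability equivalence and only spell out the easy direction and the reduction in detail.
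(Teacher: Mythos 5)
Your main route — GIT semistability via the Kempf--Ness/Hilbert--Mumford correspondence — is exactly the argument the paper relies on: the paper itself supplies no proof of this proposition, but simply cites Mumford's theorem~\cite{Mum} (and its moment-map exposition in~\cite{Be}), which is what you sketch. So to the extent you lean on the citation for the delicate semistability equivalence, you are on the paper's track.

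However, the ``alternative'' direct argument you offer for the first assertion does not work. You argue that a dominant weight $\lambda$ occurring in $\Sym(\CH)$ is in particular a weight of $\Sym(\CH)$, hence lies in the cone generated by the weights of $\CH$, and then try to conclude $\lambda\in C_K(\CH)$. The first step only places $\lambda$ in $\Cone(\Psi)=C_{T_K}(\CH)$, the Kirwan cone for the maximal torus, which is in general strictly larger than $C_K(\CH)$; the subsequent appeal to ``$\Phi_K(v)(X)=\langle Xv,v\rangle$ applied to highest weight vectors'' conflates the moment map of $\CH$ with that of $\Sym^c(\CH)$ — a highest-weight vector $w\in\Sym^c(\CH)$ of weight $\lambda$ shows that $\lambda$ (suitably scaled) lies in the image of the moment map of the $K$-action on $\Sym^c(\CH)$, not in $\Phi_K(\CH)$, and there is no elementary containment between the two. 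Concretely, for $K=U(2)^{\times 3}$ acting on $\CH=\C^2\otimes\C^2\otimes\C^2$, the triple $([2,0],[2,0],[1,1])$ has equal contents and is visibly a nonnegative integer combination of the weights $\epsilon^1_i+\epsilon^2_j+\epsilon^3_k$ (take $x_{111}=x_{112}=1$), so it lies in $\Cone(\Psi)$; yet by the Higuchi--Sudbery--Szulc inequalities (Example~\ref{ex:N qubit cone}) it violates $\xi_3^2\le\xi_1^2+\xi_2^2$ and hence is \emph{not} in $C_K(\CH)$, with $g(k[2,0],k[2,0],k[1,1])=0$ for all $k$. So the first assertion is genuinely the ``GIT'' direction too; a weight-cone estimate cannot replace it.
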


Thus the support of the function $m_K^{\CH}(\lambda)$ is contained in the Kirwan cone $C_K(\CH)$ and its asymptotic support is exactly $C_K(\CH)$.

\begin{remark}
  As $C_K(\CH)$ is a rational polyhedral cone, it can in principle be described by a finite number of inequalities determined by elements $X_a\in \Gamma_K$ as:
  $C_K(\CH)=\{\xi\in i\t^*_{\k,\geq 0}, \langle X_a,\xi\rangle \geq 0 \, \forall a\}.$
  It is in general quite difficult to determine these inequalities explicitly.
  An algorithm to describe the inequalities of $C_K(\CH)$, based on Ressayre's notion of dominant pairs~\cite{Ressayre-inventiones}, is given by Vergne-Walter~\cite{V-W}.
\end{remark}

Define $\CH_{pure}=\{v\in \CH, \langle v, v\rangle=1\}$, the set of elements of $\CH$ of norm~$1$.
The \emph{Kirwan polytope} is the rational polytope defined by
\[ \Delta_K(\CH)=\Phi_K(\CH_{pure}) \cap i\t_{\k,\geq0}^*. \]
Note that the Kirwan cone is the cone over the Kirwan polytope, $C_K(\CH)=\R_{\geq 0} \Delta_K(\CH)$.

\begin{example}[Qubits]\label{ex:N qubit cone}
In quantum mechanics, the elements of $\CH_{pure}$ are called pure states.
The defining representation $\C^2$ of $\SU(2)$ is known as a \emph{qubit}, and the exterior tensor power representation of $K=\SU(2)^{\times s}$ on $\CH=(\C^2)^{\otimes s}$ as \emph{$s$ qubits}.

Higuchi-Sudbery-Szulc~\cite{H-Su-Sz} have determined the Kirwan cone for $s$ qubits:
Consider $\xi_1=[\xi_1^1,\xi_2^2],\ldots, \xi_s=[\xi_1^s,\xi_2^s]$, a sequence of $s$ elements of $Pi\t^*_{\u(2),\geq 0}$ (that is $\xi_j^1\geq \xi_j^2\geq 0$).
Then $(\xi_1,\ldots,\xi_s)\in C_K(\CH)$ if and only if, for any $j=1,2,\ldots,s,$ $\xi_j^2\leq \sum_{k\neq j} \xi_k^2$, as well as $\lvert\xi_1\rvert=\lvert\xi_2\rvert=\dots=\lvert\xi_s\rvert$, where we recall that $\lvert\xi_j\rvert=\xi_j^1+\xi_j^2$.%\lambda_1^1+\lambda_1^2=\lambda_2^1+\lambda_2^2=\cdots=\lambda_s^1+\lambda_s^2.$
\end{example}

We conclude this section with one more example of the connection between multiplicities and Kirwan cone:

\begin{example}[Cauchy formula, revisited]\label{ex:Cauchy cone}
  As in Lemma~\ref{lem:Cauchy formula}, let $\CH=\C^n\otimes \C^N$ under the action of $K=U(n)\times U(N)$, where $N, n$ are positive integers such that $N\geq n$.
  Using the Hermitian inner product, we may identify $A\in\CH$ with a matrix $A:\C^n\to\C^N$.
  Then the moment map $\Phi_K\colon\CH\to i\k^*$ is given by $\Phi_K(A)=[AA^*,A^*A]$, with values in the spaces of Hermitian matrices of size $n$ and $N$, respectively.
  It is easy to see that the Kirwan cone is precisely the ``diagonal''
  \[ C_K(\CH) = \big\{ (\xi,\tilde\xi), \xi\in Pi\t^*_{\u(n),\geq0}. \big\} \]
  Thus the multiplicity function determined by the Cauchy formula~\eqref{exa:Cauchy formula} is supported exactly on the set $\Lambda_K\cap C_K(\CH)$ (and with value 1), in agreement with Proposition~\ref{prp:mumford}.
\end{example}

The Kirwan cone $C_K(\CH)$ is always contained in the real vector space spanned by the weights of the representation.
In the case of the Kronecker coefficients, Example~\ref{ex:kron}, this corresponds precisely to the equations $\lvert\xi_1\rvert=\dots=\lvert\xi_s\rvert$.
However, it is not always true that the cone $C_K(\CH)$ has non-empty interior in this vector space, as the preceding Example~\ref{ex:Cauchy cone} demonstrates (a special case is Example~\ref{ex:N qubit cone} for $s=2$).

%%%%%%%%%%%%%%%%%%%%%%%%%%%%%%%%%%%%%%%%%%%%%%%%%%%%%%%%%%%%%%%%%%%%%
\section{Multiplicities and partitions functions}\label{sec:multiplicities}
%%%%%%%%%%%%%%%%%%%%%%%%%%%%%%%%%%%%%%%%%%%%%%%%%%%%%%%%%%%%%%%%%%%%%

In this section, we first revisit the definition of iterated residues, topes, and Orlik-Solomon basis.
We then recall the quasi-polynomial nature of the multiplicities and close by discussing its behavior on the facets of the Kirwan cone.

%....................................................................
\subsection{Iterated residues, topes, and Orlik-Solomon bases}
%....................................................................

If $f$ is a meromorphic function in one variable $z$, consider its Laurent series $\sum_n a_n z^n$ at $z=0$.
The coefficient of $z^{-1}$ is denoted by $\Res_{z=0}f$.

We now recall the notion of an iterated residue.
Let $E$ be a real vector space, $r=\dim E$, equipped with a volume form $\det$.
Consider an ordered basis of $E$:
$\overrightarrow{\sigma}=[\alpha_1,\alpha_2,\ldots, \alpha_r].$
For $z\in E^*_\C,$ let $z_j=\langle z,\alpha_j\rangle$.
Then $(z_1,z_2,\ldots, z_r)$ are coordinates for $E^*_\C\sim \C^r$.
Given a meromorphic function $f$ on $E^*_\C$ with poles on a finite union of hyperplanes, use the coordinates obtained from $\overrightarrow\sigma$ to express it as a function $f(z)=f(z_1,\dots,z_r)$.
In particular, $f$ may have poles on $z_j=0$.
Then the iterated residue of $f$ associated to $\overrightarrow\sigma$ is defined as follows:

\begin{definition}[Iterated residue]\label{def:ires}
  Then the \emph{iterated residue} of $f$ with respect to the ordered basis $\overrightarrow\sigma$ is defined by
  \[
    \Res_{{\overrightarrow{\sigma}}}(f) = \frac 1 {\lvert\det\sigma\rvert} \Res_{z_1=0}(\Res_{z_2=0}\cdots(\Res_{z_r=0}f(z_1,z_2,\ldots,z_r))\cdots),
  \]
  where we recall that $\det$ refers to the volume form of $E$.
\end{definition}

\begin{remark}
  The iterated residue depends on the order of the basis elements in $\overrightarrow\sigma$.
\end{remark}

\begin{definition}[Admissible hyperplane, regular element, tope, basis]\label{def:admissible etc}
  Let $\Psi=[\psi_1,\dots,\psi_N]$ be a finite list of vectors in $E$.
  We say that a hyperplane $H\subset E$ is \emph{$\Psi$-admissible} if $H$ is generated by elements of $\Psi$.
  We denote by $\CA(\Psi)$ the set of admissible hyperplanes.

  We say that $\xi\in E$ is \emph{$\Psi$-regular} if $\xi$ doesn't belong to any of the admissible hyperplanes in $\CA(\Psi)$.
  We call $\a\subset E$ a \emph{$\Psi$-tope} if it is a connected component of the complement of the union of the $\Psi$-admissible hyperplanes.
  We will often omit the prefix $\Psi$ when the context is clear.
  Any regular element $\xi$ determines a tope that we denote by $\a(\xi)$.

  Lastly, if $\overrightarrow\sigma$ is a sublist of $\Psi$ such that its elements form a basis of $E$ then we say that $\sigma$ is an \emph{ordered basis} of $\Psi$.
  The underlying set $\sigma$ will be called simply a \emph{basis} of $\Psi$.
  We write $\Cone(\sigma)$ for the convex cone generated by the elements of $\sigma$.
\end{definition}

\begin{example}\label{exE}
  Let $E=\R\epsilon_1 \oplus \R\epsilon_2$ denote a two-dimensional vector space with basis vectors $\epsilon_1,\epsilon_2$.
  Consider the list of vector $\Psi=[\psi_1,\psi_2,\psi_3]=[\epsilon_1,\epsilon_2,\epsilon_1+\epsilon_2]$.
  Then $\{ [\psi_1,\psi_2], [\psi_1,\psi_3], [\psi_2,\psi_3] \}$ is the set of ordered bases of $\Psi$.
\end{example}

From now on we will assume that the elements of $\Psi$ generate a lattice $L \subseteq E$.

\begin{definition}[Index]\label{def:index}
  For $\sigma$ a basis of $\Psi$, let $d_\sigma$ be the smallest integer such that $d_\sigma L$ is contained in the lattice $\Z \sigma$ generated by $\sigma$.
  Then we define the \emph{index} of $\Psi$ with respect to $L$ as
  \[ q(\Psi) = \operatorname{lcm} \{ d_\sigma, \sigma \text{ basis of } \Psi \}, \]
  where $\operatorname{lcm}$ denotes the least common multiple.
\end{definition}

We now define the central concept of an Orlik-Solomon basis.

\begin{definition}[Orlik-Solomon basis]\label{OS}
  Let $\overrightarrow\sigma=[\psi_{i_1},\psi_{i_2},\dots,\psi_{i_r}]$ be an ordered basis of $\Psi$.
  Then $\overrightarrow\sigma$ is an \emph{Orlik-Solomon (OS) basis} if, for each $l=1,\dots,r$, there is no $1\leq j<i_l$ such that the elements $\{\psi_j,\psi_{i_l},\dots,\psi_{i_r}\}$ are linearly dependent.
  Note that the notion of an OS basis depends on the order of $\Psi$.

  We write $\mathcal{OS}(\Psi)$ for the set of OS bases.
  In addition, if $\a$ is a $\Psi$-tope then we denote
  \[ \mathcal{OS}(\Psi,\a) = \{ \overrightarrow\sigma \in \mathcal{OS}(\Psi), \a \subset \Cone(\sigma) \}. \]
  Its elements are called the \emph{OS bases adapted to $\a$}.

\end{definition}

\begin{example}
  We continue with Example~\ref{exE}.
  We compute $\mathcal{OS}(\Psi)=\{[\psi_1,\psi_2], [\psi_1,\psi_3]\}$.
  Moreover, $\a=\R_{>0}\epsilon_2\oplus \R_{>0}(\epsilon_1+\epsilon_2)$ is a $\Psi$-tope, and $\mathcal{OS}(\Psi,\a)=\{ [\psi_1,\psi_2]\}$.
\end{example}

For an algorithm for computing $\mathcal{OS}(\Psi,\a(\xi))$, given as input a $\Psi$-regular element $\xi$, we use the method described in~\cite[Section 4.9.6]{BV2015} (see Appendix~\ref{app:thealgoKro}).
It is based on the notion of \emph{maximal nested sets} of De Concini-Procesi~\cite{DCP} and developed in~\cite{BBCV}.

%....................................................................
\subsection{Quasi-polynomial functions and multiplicities}\label{subsec:quasipol}
%....................................................................

We now describe the nature of the function $m_K^{\CH}(\lambda)$ on $C_K(\CH)$, defined in Eq.~\eqref{eq:multiplicities}.
In particular, the results apply to the Kronecker coefficients, as explained in Example~\ref{ex:kron}.

Let $L$ be a lattice in a real vector space $E$.
Given an integer $q$, a function $c$ on $L$ has \emph{period} $q$ if $c(\lambda+q \nu)=c(\lambda)$ for all $\lambda,\nu$ in $L$.
We say that $c$ is a \emph{periodic function} on $L$ if there exists a $q$ such that $c$ has period $q$.
Note that, if $q=1$, then $c$ is a constant function on $L$.

Let $\lambda_0\in L$ and $q$ an integer.
The restriction of a polynomial function on $E$ to a coset $\lambda_0+qL$ will be called a \emph{polynomial function} on the coset $\lambda_0+qL$.

\begin{definition}[Quasi-polynomial function]
  A \emph{quasi-polynomial function} $p$ on $L$ is a linear combination of products of polynomial functions with periodic functions.
  In other words, a quasi-polynomial function $p$ can be written as
  \[ p(\lambda) = \sum_i c_i(\lambda) p_i(\lambda), \]
  where the $p_i$ are polynomial functions and the ``coefficients'' $c_i$ are periodic functions on $L$.

  We say that $p$ has \emph{period} $q$ if all the $c_i(\lambda)$ have period $q$.
  In this case, for any $\lambda_0\in L$, the function $\lambda\mapsto p(\lambda_0+q\lambda)$ is a polynomial function on $L$.
  It follows that we can represent a quasi-polynomial function of period $q$ as a family of polynomials, indexed by $L/qL$.
\end{definition}

If $q$ is very large then the above description is not very efficient, as the number of cosets can be quite large.
In the present work, we will only have to consider relatively small periods $q$.

The space of quasi-polynomial functions is graded:
we say that $p$ is \emph{homogeneous} of degree $k$ if %$p(\lambda)=\sum_{i,j} c_i(\lambda) p_j(\lambda)$ where
the polynomials $p_j$ are homogeneous of degree $k.$ %and the functions  $c_i$ periodic, we say that $p$ is homogeneous of degree $k$.
 As for polynomials, we say that $p$ is of \emph{degree} $k$ if $p$ is a sum of homogeneous terms of degree less or equal to $k$, and the term of degree $k$ is non-zero.

\begin{example}\label{ex:simple quasipol}
  The function
  $$m(k)=\frac12 k^2+k+\frac34+\frac14(-1)^k$$
  is a quasi-polynomial function of $k\in \Z$, of degree $2$ and period $2$.
  On each of the $2$ cosets, $m(k)$ coincides with a polynomial, namely
  \[
  m(k) = \begin{cases}
  m_0(k)=\frac12 k^2+k+\frac34+\frac14 & \text{if $k=0\pmod2$}, \\
  m_1(k)=\frac12 k^2+k+\frac34-\frac14 & \text{if $k=1\pmod2$}.
  \end{cases}
  \]
\end{example}

In practice, we will naturally obtain a quasi-polynomial $p$ as a sum of quasi-polynomial functions $p_1,p_2,\ldots,p_u$ of periods $q_1,q_2,\ldots, q_{u}$.
Thus $p$ is of period $q$, where $q$ is the least common multiple of $q_1,q_2,\ldots,q_u$.
Furthermore, in our examples, when the period $q_i$ is large, the degree of the corresponding quasi-polynomial $p_i$ is usually small.
Thus it is more efficient to keep $p$ as represented as $\sum p_i$, the number of cosets needed to describe each $p_i$ being $q_i$, since $\sum q_i$ is usually much smaller that $q$.
We will thus refer to the set $\{q_1,q_2,\ldots,q_{u}\}$ as a \emph{set of periods} of the quasi-polynomial function $p$.

\begin{example}
  The quasi-polynomial $m(k)$ in Example~\ref{ex:simple quasipol} is the sum of the polynomials
  \[ p_1(k)=\frac12 k^2+k+\frac34, \quad p_2(k) = \frac14(-1)^k. \]
  The first has degree 2 and period 1 (i.e., it is an ordinary polynomial), while the second has degree 0 and period 2.
  Thus $m$ has a set of periods $\{1,2\}$.
\end{example}

As we will discuss below, the dilated Kronecker coefficients are quasi-polynomial functions.
We do not in general know the set of periods as a function of the number of rows $n_1,\dots,n_s$ of the Young diagrams.
However, here are some concrete examples obtained by using our algorithm (see Section~\ref{sec:examples} for more detail):

\begin{example}\label{ex:periods}
For $n_1=n_2=n_3=3$, the function $k\mapsto g(k\lambda,k\mu,k\nu)$ is a quasi-polynomial function with set of periods included in $\{1,2,3,4\}$, leading to polynomial behavior on the cosets $f+12\Z$.

For the case of four qubits, $n_1=n_2=n_3=n_4=2$, the function $k\mapsto g(k\nu_1, k\nu_2, k\nu_3, k\nu_4)$ is a quasi-polynomial function with set of periods included in $\{1,2,3\}$, leading to polynomial behavior on the cosets $f+6\Z$.

For the case of five qubits, $n_1=n_2=n_3=n_4=n_5=2$, the function $k\mapsto g(k\nu_1, k\nu_2, k\nu_3, k\nu_4, k\nu_5)$ is a quasi-polynomial function with set of periods included in  $\{1,2,3,4,5\}$ leading to polynomial behavior on cosets $f+60\Z$.
\end{example}

In the case of one variable, we can give the following characterization of quasi-polynomiality.
If the function $p(k)$ is quasi-polynomial, its generating series $\sum_{k=0}^{\infty} p(k) t^k$ is the Taylor expansion at $t=0$ of a rational function
\[ R(t)=\frac{P(t)}{\prod_{i=1}^s(1-t^{a_i})}, \]
where the $a_i$ are integers, and $P(t)$ a polynomial in $t$ of degree strictly less than $\sum_i a_i$.
The correspondence is  as follows.
Consider   a quasi-polynomial $p(k)$ of period $q$, equal to $0$ on all cosets except the coset $f+q\Z$, with $0\leq f< q$.
Write  the polynomial function   $j\mapsto p(f+qj)$  of degree $R$ in terms of binomials: $p(f+qj)=\sum_{n=0}^R a(n) \binom{j+n}{n}$.
Then $$\sum_{j=0}^{\infty} p(f+qj) t^{f+qj}=t^f\sum_{n=0}^Ra(n)\frac{1}{(1-t^q)^{n+1}}.$$
In our examples, the degree of the quasi-polynomial function $p$, as well as its period, will not be very large, so there is no computational difficulty in obtaining the rational function $R(t)$ starting from $p(k)$, and conversely.
We give a striking example of the function $R$, giving the Hilbert series of measures of entanglement for $4$ qubits~\cite{Nolan} and the corresponding quasi-polynomial $p(k)$ in Section~\ref{Wallach.ex} (cf.\ Example~\ref{Hilbertseries}).

\medskip

We now return to the general setting of $K$ acting on an $N$-dimensional Hermitian vector space $\CH$ by unitary transformations.
Let $\bar K$ denote the image of $K$ in $U(\CH)$.
For the rest of this chapter, we choose $L=\Lambda_{\bar K}$ and $E = i\t^*_{\bar\k}$.
Since $\bar K$ is a quotient of $K$, we may think of $L \subseteq \Lambda_K$ and $E \subseteq i\t^*_{\k}$.
% Let us for the rest of this article choose $E=i\t^*_\k$ and $L=\Lambda_K$ as the weight lattice of $K$.
We equip $E$ with a volume form such that the fundamental cell of $L$ has unit measure.
We choose an order on the weights for the action of the maximal torus $T_K$ and write
$$\Psi=[\psi_1,\psi_2,\ldots,\psi_N]$$
with each $\psi_i\in \Lambda_{\bar K}\subseteq\Lambda_K\subset E$.
Note that $\Psi$ generates the lattice $L=\Lambda_{\bar K}$ (essentially by definition).
Now, we do no longer assume that the action of $K$ contains the homotheties $e^{i\theta} {\rm \Id}_\CH$.
Instead, we will require that the cone $\Cone(\Psi)$ generated by $\Psi$ is a pointed cone:
$\Cone(\Psi)\cap-\Cone(\Psi)=\{0\}$.
This condition ensures that the multiplicities for the action of $T_K$ are finite.

\begin{example}
  In the situation of Example~\ref{ex:kron}, where $K=U(n_1)\times\dots\times U(n_s)$ acts on $\CH=\C^{n_1}\otimes\dots\otimes\C^{n_s}$, we can identify $\bar K$ with the quotient $K/Z$, where $Z$ is the subgroup of the center that consist of the elements $(z_1\Id,\dots,z_s\Id)$ with $z_j\in U(1)$ and $z_1\cdots z_s=1$.
  The lattice $L=\Lambda_{\bar K}$ can be identified with the set of $\lambda=(\lambda_1,\dots,\lambda_s) \in \Lambda_K$ that satisfy $\lvert\lambda_1\rvert=\dots=\lvert\lambda_s\rvert$.
\end{example}

Let $\CP_\Psi$ be the function on $\Lambda_K$  that computes the number of ways we can write $\mu\in \Lambda_K$ as $\sum x_i\psi_i$ with $x_i$ nonnegative integers.
The function $\CP_\Psi(\mu)$ is known as the \emph{(Kostant) partition function} with respect to $\Psi$.
It is immediate to see that
\begin{equation}\label{eq:easy kostant}
  m_{T_{K}}^{\CH}(\mu)=\CP_\Psi(\mu).
\end{equation}
Moreover, the cone $C_{T_K}(\CH)$ is just the cone $\Cone(\Psi)$ generated by the list $\Psi$ of weights.
We now record the following fundamental relation between the multiplicities for $T_K$ and for $K$, which follows from the Weyl character formula:

% Remembering that $\Sym(\CH)=\oplus _{\lambda\in \Lambda_{K,\geq 0}} m_K^{\CH}(\lambda) V_\lambda^K$  and $\Sym(\CH)=\oplus_{\mu\in \Lambda_K} m_{T_K}^{\CH}(\mu) e^{\mu}$, then  for $\lambda\in \Lambda_{K,\geq 0}$, we have

\begin{lemma}\label{lem:KandTmult}
  Let $\lambda\in\Lambda_{K,\geq 0}$ be a dominant weight.
  Then:
  \[ m_K^{\CH}(\lambda)=\sum_{w\in \CW_\k} \epsilon(w) \CP_\Psi(\lambda+\rho_\k-w(\rho_\k))=\sum_{w\in \CW_\k} \epsilon(w) m_{T_K}^{\CH}(\lambda+\rho_\k-w(\rho_\k)), \]
  where we recall that $\CW_\k$ denotes the Weyl group and $\rho_\k$ half the sum of the positive roots.
\end{lemma}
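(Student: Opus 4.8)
The second equality is precisely~\eqref{eq:easy kostant}, so it suffices to establish the first, which is Kostant's multiplicity formula~\cite{Kos} applied to the $T_K$-character of $\Sym(\CH)$. The plan is a straightforward character computation, carried out in a suitable completion of the group algebra of $\Lambda_K$: because $\Cone(\Psi)$ is pointed there is a linear functional positive on it, and one completes in that direction. This is what makes the a priori infinite sum over $\lambda$ below meaningful (using that the $m_K^{\CH}(\lambda)$ are finite), and it is compatible with $\CP_\Psi$ being supported on $\Cone(\Psi)$, so that only finitely many terms contribute to any fixed Fourier coefficient.

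First I would write down the two $T_K$-character identities coming from the decomposition of $\Sym(\CH)$. From~\eqref{eq:multiplicities} together with the Weyl character formula,
$$\sum_{\mu}m_{T_K}^{\CH}(\mu)\,e^{\mu}=\sum_{\lambda\in\Lambda_{K,\geq0}}m_K^{\CH}(\lambda)\,\chi_\lambda \quad\text{and}\quad \chi_\lambda\cdot\Bigl(\sum_{w\in\CW_\k}\epsilon(w)e^{w\rho_\k}\Bigr)=\sum_{w\in\CW_\k}\epsilon(w)e^{w(\lambda+\rho_\k)},$$
where $\chi_\lambda$ denotes the character of $V_\lambda^K$. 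Next I would multiply the first identity through by the Weyl denominator $\sum_{w}\epsilon(w)e^{w\rho_\k}$, substitute the second one on the right, and use $m_{T_K}^{\CH}=\CP_\Psi$ on the left, obtaining
$$\sum_{w\in\CW_\k}\epsilon(w)\sum_{\mu}\CP_\Psi(\mu)\,e^{\mu+w\rho_\k}=\sum_{\lambda\in\Lambda_{K,\geq0}}m_K^{\CH}(\lambda)\sum_{w\in\CW_\k}\epsilon(w)e^{w(\lambda+\rho_\k)}.$$

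Finally I would compare, for a fixed dominant $\lambda$, the coefficients of $e^{\lambda+\rho_\k}$ on the two sides. Since $\lambda$ is dominant, $\lambda+\rho_\k$ is regular dominant, hence is the unique regular dominant weight in its $\CW_\k$-orbit and has trivial $\CW_\k$-stabilizer; therefore on the right-hand side only the term $(\lambda,w=e)$ contributes, giving coefficient $m_K^{\CH}(\lambda)$. On the left-hand side, the equation $\mu+w\rho_\k=\lambda+\rho_\k$ forces $\mu=\lambda+\rho_\k-w(\rho_\k)$, so the coefficient equals $\sum_{w\in\CW_\k}\epsilon(w)\CP_\Psi(\lambda+\rho_\k-w(\rho_\k))$. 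Equating the two proves the lemma. There is no genuine obstacle; the only point that needs mild care is fixing the ambient completion adapted to the pointed cone $\Cone(\Psi)$ before manipulating the characters, after which multiplication by the finite Weyl denominator and coefficient extraction are routine, with the regularity of $\lambda+\rho_\k$ doing the essential work.
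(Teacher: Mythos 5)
Your proof is correct, and it is exactly the argument the paper implicitly intends: the text before the lemma simply asserts that the identity "follows from the Weyl character formula," and your computation (multiply the character identity $\sum_\mu m_{T_K}^{\CH}(\mu)e^\mu=\sum_\lambda m_K^{\CH}(\lambda)\chi_\lambda$ by the Weyl denominator, substitute $\chi_\lambda\cdot\sum_w\epsilon(w)e^{w\rho_\k}=\sum_w\epsilon(w)e^{w(\lambda+\rho_\k)}$, and read off the coefficient of $e^{\lambda+\rho_\k}$ using that $\lambda+\rho_\k$ is regular dominant) is precisely the standard derivation of Kostant's formula in this setting. Your remark about working in the completion of the group algebra adapted to the pointed cone $\Cone(\Psi)$ correctly handles the only subtlety (the infinite sum over $\lambda$), which the paper leaves unstated.
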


\begin{remark}
  For each $y\in i\t_\k^*$, define the polytope
  $$\Pi_\Psi(y)=\Big\{[x_1,\ldots,x_N] \in \R^N, x_i\geq 0, \sum_{a=1}^N x_a\psi_a=y\Big\}.$$
  Then $m_{T_K}^{\CH}(\mu) = \mathcal P_\Psi(\mu)$ ($\mu\in \Lambda_K$) is the number of integral points in the polytope $\Pi_\Psi(\mu)$.
  Christandl-Doran-Walter~\cite{C-D-W} compute the multiplicities $m_K^{\CH}(\lambda)$ by using the formula in Lemma~\ref{lem:KandTmult} together with the preceding observation.
  Indeed, the multiplicities for $T_K$ are given by the number of integral points in the polytopes $\Pi_\Psi(\lambda+\rho_\k-w(\rho_\k))$, and they employ Barvinok's algorithm~\cite{Bar}, as implemented in~\cite{Verdo,latte}, to count these efficiently.
  Their method is of polynomial runtime in the input data when $K$ and $\CH$ are fixed (for the Kronecker coefficients, when the numbers of rows are fixed).
\end{remark}

We now describe our own approach to computing multiplicities based on the iterated residues formula from Szenes-Vergne~\cite{SzeVer}.
For $z\in (\t_{\k})_\C$, define:
$$S_{T_K}^{\Psi}(\mu,z)=e^{\langle\mu,z\rangle }\frac{1}{\prod_{\psi\in \Psi} (1-e^{-\langle\psi,z\rangle })}$$
The relationship between $S_{T_K}^\Psi$ and the multiplicities for $T_K$ is as follows:
If $\Re \langle\psi,z\rangle>0$ for all $\psi\in\Psi$ then it is clear from the geometric series and Eq.~\eqref{eq:easy kostant} that
\begin{equation}\label{eq:geometric}
  \frac{1}{\prod_{\psi \in \Psi}(1-e^{-\langle\psi,z\rangle})}
=\!\!\!\! \sum_{\nu \in \Cone(\Psi)}\!\!\CP_\Psi(\nu) e^{-\langle \nu,z\rangle}
=\!\!\!\! \sum_{\nu \in \Cone(\Psi)}\!\!m_{T_K}^\CH(\nu) e^{-\langle \nu,z\rangle}.
\end{equation}
% $$S_{T_K}^{\Psi}(\mu,z)=e^{\langle\mu,z\rangle } \sum_{\nu \in \Cone(\Psi)}\CP_\Psi(\nu)e^{\langle\mu-\nu,z\rangle}.$$
Thus $S_{T_K}^\Psi(\mu,z)$ is the Laplace transform of translates of the multiplicity function $m_{T_K}^\CH$.
We now explain how the multiplicities can be recovered by using iterated residues.

\smallskip

Denote by $\Gamma_{\bar K}$ the dual lattice of $\Lambda_{\bar K}$ and by $q(\Psi)$ the index of $\Psi$ with respect to $L=\Lambda_{\bar K}$ (see Definition~\ref{def:index}).
So, if $\sigma$ is a basis of $\Psi$ and $q$ a multiple of $q(\Psi)$, then $q \Lambda_{\bar K} \subset \Z\sigma$.

\begin{definition}\label{def:szenes vergne quasipol}
  Let $\a$ be a $\Psi$-tope and $q$ a multiple of the index $q(\Psi)$.
  Define
  $$p_\a^\Psi(\mu)=\sum_{\gamma\in \Gamma_{\bar K}/q\Gamma_{\bar K}} \sum_{{\overrightarrow{\sigma}}\in \mathcal{OS}(\Psi,\a)} \Res_{\overrightarrow{\sigma}} S_{T_K}^{\Psi}(\mu, z+\frac{2i\pi}{q} \gamma).$$
\end{definition}

\begin{remark}\label{rem:szenes vergne quasipol}
  The role of $\a$ in the definition of $p_\a^\Psi(\mu)$ is to select the set $\mathcal{OS}(\Psi,\a)$, that is, the paths along which to calculate the iterated residue in $z$ of the function $z\mapsto S_{T_K}^{\Psi}(\mu, z+\frac{2i\pi}{q} \gamma)$.
  The function $p_\a^\Psi(\mu)$ is independent of the choice of the multiple $q$ of $q(\Psi)$.
  Thus, in principle, it is most economical to choose $q=q(\Psi)$.
  However, we will later have to compute several such partitition functions and it will be more convenient to choose a $q$ that works for all of them (i.e., one that is a multiple of all the $q(\Psi)$ involved).

  If $\gamma\in \Gamma_{\bar K}$, the iterated residue of the function $z\mapsto S_{T_K}^{\Psi}(\mu, z+\frac{2i\pi}{q} \gamma)$ is a quasi-polynomial function of $\mu$ and period $q$.
  Indeed, the residue depends on $\mu$ through the Taylor series at $z=0$ of $e^{\langle\mu,z+\frac{2i\pi}{q} \gamma\rangle}=e^{\frac{2i\pi}{q}\langle\mu,\gamma\rangle} e^{\langle\mu,z\rangle}$, and $e^{\frac{2i\pi}{q} \langle\mu,\gamma\rangle}$ is a periodic function of $\mu$ of period $q$.
\end{remark}

The key result that we will use is the following theorem, which asserts that the multiplicities for $T_K$ (equivalently, by Eq.~\eqref{eq:easy kostant}, the values of the Kostant partition function) agree in the closure of any given tope $\a$ with the corresponding quasi-polynomial $p_\a^\Psi$ from Definition~\ref{def:szenes vergne quasipol}.

\begin{theorem}[Szenes-Vergne, {\cite{SzeVer}}]\label{thm:multT}
  Let $\a$ be a $\Psi$-tope such that $\a\subseteq\Cone\Psi$.
  Then, for all $\mu\in\overline\a \cap \Lambda_K$,
  $$m_{T_K}^{\CH}(\mu)=\CP_\Psi(\mu)=p_\a^\Psi(\mu).$$
\end{theorem}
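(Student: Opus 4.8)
The plan is to recover $\CP_\Psi(\mu)$ from the generating function $S_{T_K}^\Psi(\mu,z)$ by Fourier inversion on a torus, to evaluate the resulting multidimensional contour integral by iterated one-variable residues, and then to match the outcome combinatorially with the sum over Orlik--Solomon bases adapted to $\a$; the statement on the \emph{closed} tope follows afterward by a density argument. Throughout it suffices to treat $\mu\in\a\cap\Lambda_K$, since both sides of the claimed identity vanish off the monoid generated by $\Psi$.

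First I would fix $\zeta\in(\t_\k)_\C$ with $\Re\langle\psi,\zeta\rangle>0$ for all $\psi\in\Psi$, so that~\eqref{eq:geometric} converges absolutely and $S_{T_K}^\Psi(\mu,z)=\sum_{\nu}\CP_\Psi(\nu)\,e^{\langle\mu-\nu,z\rangle}$ near $\zeta$. Integrating over the real $r$-torus through $\zeta$ in the directions $i\t_{\bar\k}$, reduced modulo the common period lattice of the characters $z\mapsto e^{\langle\psi,z\rangle}$, and using orthogonality of characters isolates the term $\nu=\mu$, giving
\[
  \CP_\Psi(\mu)=\frac{1}{\vol}\int S_{T_K}^\Psi(\mu,\zeta+\theta)\,d\theta .
\]
I would then compute this integral one coordinate at a time, in coordinates adapted to a basis of $\Psi$ and rescaled by a multiple $q$ of the index $q(\Psi)$ so that $q\Lambda_{\bar K}\subset\Z\sigma$ for every basis $\sigma$; this rescaling is exactly what makes each pole locus $\langle\psi,z\rangle\in 2\pi i\Z$ split into the $q$ sheets indexed by $\gamma\in\Gamma_{\bar K}/q\Gamma_{\bar K}$. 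In each one-dimensional step the residue theorem replaces the circle by a finite sum of residues, and the factor $e^{\langle\mu,z\rangle}$ dictates toward which end of the imaginary axis one closes the contour; it is here that $\mu\in\a$ enters, since for $\mu$ in the tope $\a$ these closing directions, hence the enclosed poles, are unambiguous. Organizing the poles collected across the $r$ iterations by the maximal-nested-set / iterated-residue bookkeeping of De Concini--Procesi~\cite{DCP} and Szenes--Vergne~\cite{SzeVer} (see also~\cite{BBCV}) rewrites the integral as a sum of iterated residues $\Res_{\overrightarrow\sigma}S_{T_K}^\Psi(\mu,z+\tfrac{2i\pi}{q}\gamma)$.

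The main obstacle is to show that this sum is exactly the one in Definition~\ref{def:szenes vergne quasipol}, i.e.\ that the ordered bases that survive are precisely the $\overrightarrow\sigma\in\mathcal{OS}(\Psi,\a)$. This requires two facts: iterated residues along non-Orlik--Solomon orderings vanish, or cancel in pairs; and the deformation-direction constraints forced by $\mu\in\a$ are equivalent to the adaptation condition $\a\subset\Cone(\sigma)$. This identification is the technical core of~\cite{SzeVer}; alternatively one can reprove it by induction on $\#\Psi$ using the deletion recursion $\CP_\Psi(\mu)=\sum_{k\ge0}\CP_{\Psi\setminus\{\psi\}}(\mu-k\psi)$ and tracking how topes and OS bases restrict under removing one vector.

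Finally, to upgrade from $\a$ to $\overline\a$: by Remark~\ref{rem:szenes vergne quasipol} the function $p_\a^\Psi$ is a quasi-polynomial of period $q$, while $\CP_\Psi=m_{T_K}^\CH$ is (classically, or as a special case of~\cite{M-S}) piecewise quasi-polynomial and agrees on the closure of the tope $\a$ with a single quasi-polynomial of period dividing $q$. Since the two quasi-polynomials agree on $\a\cap\Lambda_K$, and every coset of $q\Lambda_{\bar K}$ is Zariski-dense in the full-dimensional cone $\a$, they agree coset by coset as polynomials, hence on all of $\overline\a\cap\Lambda_K$.
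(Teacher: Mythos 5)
The paper does not prove Theorem~\ref{thm:multT}; it is quoted as an external result of Szenes--Vergne and used as a black box. Your proposal is therefore a reconstruction of the argument in~\cite{SzeVer}, and at the level of strategy it is on target: one does indeed start from the Laplace-transform identity~\eqref{eq:geometric}, recover $\CP_\Psi(\mu)$ by integrating over a compact $r$-torus, expand that integral as iterated one-variable residues, and match the surviving contributions with the OS bases adapted to the tope. So there is no ``different route from the paper'' to compare against -- the paper takes no route.

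That said, as a \emph{proof} your sketch has a genuine gap precisely at the place you label ``the main obstacle.'' You need two facts: (a) the contributions that survive the iterated integration are indexed exactly by $\mathcal{OS}(\Psi,\a)$, with the residues taken at the $q$-torsion points $\tfrac{2i\pi}{q}\gamma$, and (b) the result is independent of the auxiliary choices (ordering of coordinates, base point $\zeta$, choice of multiple $q$ of $q(\Psi)$). Your treatment of (a) is ``this is the technical core of~\cite{SzeVer}; alternatively one can reprove it by induction using the deletion recursion.'' Citing~\cite{SzeVer} here is circular for a blind proof of a theorem attributed to~\cite{SzeVer}, and the one-sentence inductive alternative does not actually carry out the bookkeeping of how topes, admissible hyperplanes and OS bases behave under removing a vector from $\Psi$, which is where all the work sits. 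Point (b) is not addressed at all, and it is not innocuous: the collection of poles encountered genuinely depends on the order of the coordinate-by-coordinate residues, and the cancellation/independence statement (or the equivalent reformulation via the Jeffrey--Kirwan residue functional and total residue, cf.\ Lemma~\ref{lem:OS bases}) is what makes the formula well-defined.

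Two smaller remarks. First, the ``closing the contour toward one end of the imaginary axis'' language is loose for a compact torus integral; what the exponent $\langle\mu,z\rangle$ actually governs, at each one-variable step, is whether the residues at $u=0$ or $u=\infty$ (equivalently, inside or outside the fixed circle) are collected, and that this choice is uniform as $\mu$ ranges over the fixed tope $\a$. Second, the closing density argument implicitly relies on the fact that $\CP_\Psi$ agrees with a \emph{single} quasi-polynomial on the whole closed tope $\overline\a$; this is itself a nontrivial structural fact (proved by Szenes--Vergne, or earlier by Dahmen--Micchelli), so invoking it to finish the proof of the Szenes--Vergne theorem is again somewhat circular unless you separate the two statements and prove the structural one first. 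None of this makes your outline wrong, but it does mean the proposal is a plan for a proof rather than a proof.
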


\medskip

We now discuss a consequence of the Meinrenken-Sjamaar theorem~\cite{M-S} (see also \cite{Par-Ve} for a different proof).
It will later be important to justify the correctness of our algorithm.

Recall that a \emph{cone decomposition} of a rational polyhedral cone $C$ is a set $\{\c_1,\c_2,\ldots, \c_m\}$ of (closed) rational polyhedral cones such that
\begin{enumerate}
\item[(i)] $C=\bigcup_{i=1}^m \c_i$,
\item[(ii)] $\c_1,\c_2,\ldots,\c_m$ all have the same dimension $\dim C$,
\item[(iii)] $\c_1,\c_2,\ldots,\c_m$ intersect along faces (that is $\c_a\cap \c_b$ is a face of both $\c_a$ and $\c_b$).
\end{enumerate}

\begin{theorem}[Meinrenken-Sjamaar, {\cite{M-S}}]\label{theo:multH}
  There exists a cone decomposition $C_K(\CH)=\bigcup_a \c_a$ and, for each $a$, a quasi-polynomial function $p_{K,a}^\CH$ on the lattice $\Lambda_K$, all of the same degree $d=\dim_\C \CH- \lvert\Delta_\k^+\rvert-\dim C_K(\CH)$, such that, for all $\lambda\in\overline{\c_a} \cap \Lambda_K$,
  $$m_K^{\CH}(\lambda)=p_{K,a}^{\CH}(\lambda).$$
\end{theorem}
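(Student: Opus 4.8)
I propose to argue as follows. The plan is to reduce the statement to the torus case, where Theorem~\ref{thm:multT} already provides a piecewise quasi-polynomial description, and then to reorganize the resulting pieces into a genuinely \emph{conical} decomposition; the symplectic input of~\cite{M-S} enters precisely at this last step and in the computation of the degree. The combinatorial core (Kostant's formula together with Szenes--Vergne) is what makes the quasi-polynomials explicit, which is what our algorithm actually needs.

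First I would apply Lemma~\ref{lem:KandTmult} to write
\[
  m_K^{\CH}(\lambda)=\sum_{w\in\CW_\k}\epsilon(w)\,\CP_\Psi\bigl(\lambda+\rho_\k-w(\rho_\k)\bigr),\qquad \lambda\in\Lambda_{K,\geq0},
\]
and note that each shift $\rho_\k-w(\rho_\k)$ is a nonnegative integral combination of positive roots, which are weights of $\bar K$, hence lies in the lattice $L=\Lambda_{\bar K}$; thus $m_K^{\CH}$ is a finite $\Z$-combination of lattice translates of the single partition function $\CP_\Psi$. By Theorem~\ref{thm:multT} there is a finite family of $\Psi$-topes $\a$, each contained in and together covering $\Cone(\Psi)$, with $\CP_\Psi$ coinciding on $\overline\a\cap\Lambda_K$ with the quasi-polynomial $p_\a^\Psi$. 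Consequently, for each $w$, the function $\lambda\mapsto\CP_\Psi(\lambda+\rho_\k-w(\rho_\k))$ agrees with a quasi-polynomial of $\lambda$ on the translated closed tope $\overline\a-(\rho_\k-w(\rho_\k))$. Taking, over all $w\in\CW_\k$ and all $\a$, the common refinement of these affine polyhedral subdivisions, further cut by the walls of the dominant chamber and by the walls of the (polyhedral, by Kirwan convexity) cone $C_K(\CH)$, I obtain a finite polyhedral subdivision of $C_K(\CH)$ on the closure of each piece of which $m_K^{\CH}$ coincides with the corresponding alternating sum $\sum_w\epsilon(w)\,p_{\a_w}^\Psi(\,\cdot+\rho_\k-w(\rho_\k))$, a quasi-polynomial in $\lambda$; outside $C_K(\CH)$ the multiplicity vanishes by Proposition~\ref{prp:mumford}.

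The hard part is passing from this \emph{affine} polyhedral subdivision to a \emph{conical} decomposition $C_K(\CH)=\bigcup_a\c_a$: the shifts $\rho_\k-w(\rho_\k)$ displace the relevant hyperplanes off the origin, so the pieces above are in general bounded in some directions, and the naive degree count on them is wrong near the boundary. Here I would invoke the $[Q,R]=0$ theorem of~\cite{M-S}, which guarantees that the multiplicity function is eventually quasi-polynomial along rays and that its jumps are organized conically: the quasi-polynomial valid near a point $\lambda$ in the relative interior of $C_K(\CH)$ depends only on which cone of a fixed rational conical subdivision of $C_K(\CH)$ the ray $\R_{\geq0}\lambda$ lies in. Taking these cones (each of full dimension $\dim C_K(\CH)$) as the $\c_a$ and using that two quasi-polynomials agreeing on a full-dimensional subset of $\overline{\c_a}$ agree on all of $\overline{\c_a}$, one identifies $p_{K,a}^\CH$ with the alternating sum produced above on the appropriate sub-region. (Alternatively one can run the refinement only ``near infinity'' and homogenize, but the existence of a globally consistent conical structure is exactly the content of~\cite{M-S}, which is why the theorem is attributed there.) This is the step I expect to be the main obstacle: Steps~1--2 alone only yield an affine subdivision, and it is the symplectic geometry that upgrades it and controls the boundary behavior.

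For the degree, observe that since $\dim\c_a=\dim C_K(\CH)$ the relative interior of $\c_a$ lies in the relative interior of $C_K(\CH)$; picking $\lambda$ there, the Riemann--Roch part of $[Q,R]=0$ identifies $m_K^{\CH}(k\lambda)$ asymptotically with $\vol(\CH/\!/_{k\lambda}K)$, of order $k^{d}$ with $2d=\dim_\R(\CH/\!/_\lambda K)$. In the principal case where $C_K(\CH)$ has nonempty interior in the span of the weights (so $\dim C_K(\CH)=\dim\t_{\bar\k}$), a direct count — the generic fibre of $\Phi_K$ over a coadjoint orbit of dimension $2\lvert\Delta_\k^+\rvert$, taken modulo $K$ — gives $2d=2\bigl(\dim_\C\CH-\lvert\Delta_\k^+\rvert-\dim C_K(\CH)\bigr)$, and the value is independent of $a$; the general formula is the one recorded in~\cite{M-S}. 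Hence all the $p_{K,a}^\CH$ share the common degree $d=\dim_\C\CH-\lvert\Delta_\k^+\rvert-\dim C_K(\CH)$.
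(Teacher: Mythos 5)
The paper does not prove this statement; it is stated as Theorem~\ref{theo:multH} with a direct citation to Meinrenken--Sjamaar \cite{M-S}, and the remark that follows explicitly classifies it as a particular instance of their $[Q,R]=0$ theorem (with $M=\CH$) and defers further details to \cite{BV2015}. Your proposal therefore goes further than the paper: you try to reconstruct the result from the combinatorics available (Lemma~\ref{lem:KandTmult} plus the Szenes--Vergne Theorem~\ref{thm:multT}) and then invoke \cite{M-S} for the conical organization and the degree. That reduction is a reasonable way to \emph{understand} the theorem, and you are right to flag that the combinatorial refinement only produces an \emph{affine} subdivision whose pieces need not be cones, and that genuine symplectic input is required to pass to a conical decomposition and to control what happens near the boundary and the origin — which is precisely why the theorem carries the Meinrenken--Sjamaar attribution. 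So as a comparison: the paper simply cites; you split the claim into a combinatorial part and an $[Q,R]=0$ part. Your version buys an explicit description of the quasi-polynomials (the alternating sum of translated $p_\a^\Psi$) — which is essentially what powers the algorithm later in the paper — at the cost of still needing \cite{M-S} for both critical steps, so it is not a self-contained proof.

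Two smaller points. First, the claim that $\rho_\k-w(\rho_\k)$ lies in $L=\Lambda_{\bar K}$ is not justified: $\Lambda_{\bar K}$ is generated by the weights of $T_K$ on $\CH$, whereas $\rho_\k-w(\rho_\k)$ is a sum of roots of $\k$ (weights of the adjoint representation), so in general it lies only in $\Lambda_K$. This is harmless for your argument, since Theorem~\ref{thm:multT} and $\CP_\Psi$ are stated on $\Lambda_K$, but the sentence as written is incorrect. Second, your degree computation via $2d=\dim_\R(\CH/\!/_\lambda K)$ is only carried out in the principal (solid) case; for the general case — which does occur in this paper, e.g.\ Example~\ref{ex:Cauchy cone} where $\dim C_K(\CH)<\dim\t_\k$ — you explicitly defer to \cite{M-S}. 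That is acceptable given the theorem's attribution, but it should be stated as an appeal rather than a derivation.
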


\begin{remark}
  Theorem~\ref{theo:multH}, as well as Theorem~\ref{theo:mGK} below, are two particular instances of the $[Q,R]=0$ theorem of Mein\-ren\-ken-Sja\-maar~\cite{M-S}.
  This theorem gives a geometric formula for the quantization of a $K$-Hamiltonian manifold $M$.
  In Theorem~\ref{theo:multH}, $M=\CH$, while in Theorem~\ref{theo:mGK}, $M=T^*G$.
  For more details, see~\cite{BV2015}.
\end{remark}

The significance of Theorem~\ref{theo:multH} is that it shows that the multiplicity function $\lambda\mapsto m_K^{\CH}(\lambda)$ is a ``piecewise'' quasi-polynomial function supported on the Kirwan cone.
However, it is quite challenging to find an explicit decomposition of $C_K(\CH)$ into ``cones of quasi-polynomiality''.
Even when $K$ is a torus, this is the difficult problem of describing the decomposition of the cone $\Cone(\Psi)$ in so-called \emph{chambers} (see~\cite{BDV}).

To circumvent this difficulty, our iterated residues algorithm will produce, for a given input $\lambda^0\in C_K(\CH)$, a quasi-polynomial function that coincides with $m_K^\CH$ \emph{locally} in a closed cone $C$ containing $\lambda^0$, by deforming $\lambda^0$ in the direction of $C_K(\CH)$.
More precisely, we find a regular $\delta\in C_K(\CH)$ that is sufficiently small such that $\xi=\lambda^0+\delta$ is regular and the tope $\a=\a(\xi)$ determined by $\xi$ (Definition~\ref{def:admissible etc}) contains $\lambda^0$ in its closure.
Consider $C=\bigcap_{\sigma\in \mathcal{OS}(\Psi,\a)} \Cone(\sigma)$, which is an intersection of closed simplicial cones, all containing the tope~$\a$.
Then $\lambda^0\in C$, and our algorithm produces a quasi-polynomial that agrees with $m_K^\CH$ on $C\cap\Lambda_{K,\geq0}$.
(If $\lambda^0\not\in C_K(\CH)$ then $\xi\not\in C_K(\CH)$ and our method produces the zero quasi-polynomial, as desired.)

%....................................................................
\subsection{Degrees and faces}\label{subsec:faces}
%....................................................................

As a consequence of Theorem~\ref{theo:multH}, for any dominant weight $\lambda$ in $C_K(\CH)$, the dilated multiplicities $k\to m^{\CH}_K(k\lambda)$ are a quasi-polynomial function of $k$.
Thus it is of the form
$$m^{\CH}_K(k\lambda)=\sum_{i=0}^{N}c_i(k) k^i,$$
where $c_i(k)$ are periodic functions of $k$.
This formula is valid \emph{for all} $k\geq 0$ (so $c_0(0)=1$).
As before, the highest degree term for which this function is non-zero will be called the \emph{degree} of the quasi-polynomial function $m_K^{\CH}(k\lambda)$.
For any $\lambda$ contained in the relative interior of $C_K(\CH)$, this degree is $d$.
If $\lambda$ is in the boundary of $C_K(\CH)$, it is clear that the degree of the quasi-polynomial function $k\to m_K^{\CH}(k\lambda)$ cannot be larger than $d$, as it is the restriction of a quasi-polynomial of degree $d$.
Thus if the degree of $k\to m^\CH(k\lambda)$ is strictly smaller than $d$ then the corresponding point $\lambda$ is necessarily in the boundary of the Kirwan cone.

\begin{example}[Kronecker coefficients]\label{ex:kron degree}
  For the Kronecker case (Example~\ref{ex:kron}) $\dim C_K(\CH) \leq \sum_{j=1}^s (n_j-1) + 1$ (see~\cite{V-W} for necessary and sufficient conditions on when this is attained).
  It follows that the dilated Kronecker coefficients $k \mapsto g(k\nu_1,\ldots,k\nu_s)$ are quasi-polynomial functions of degree at most
$d = \prod_{j=1}^s n_j - \sum_{j=1}^s \frac {n_j(n_j-1)}2 - \sum_{j=1}^s (n_j - 1) - 1$.
\end{example}

Now consider a decomposition $C_K(\CH)=\bigcup_a \c_a$ into cones of quasi-po\-ly\-no\-mi\-a\-li\-ty.
We already remarked that the degree $d$ of the quasi-polynomial $p_{K,a}^{\CH}$ is the same for each $\c_a$.
However, the periods of the quasi-polynomials $p_{K,a}^{\CH}$ will in general be different in different cones $\c_a$.
On the example when $\CH=\C^6\otimes \C^3\otimes \C^2$ (see Section~\ref{exa:632}), we produce a cone $\c_a$ where $p_{K,a}^{\CH}$ is of degree $8$ and has the set of periods $\{1,2,3\}$, and a cone $\c_b$, for which $p_{K,b}^{\CH}$ is of degree $8$ and polynomial, that is, with period $\{1\}$.

When $F$ is a face of $C_K(\CH)$ then we can write $F=\bigcup_a (F\cap \c_a)$, where we restrict the union to those cones $\c_a$ for which $\dim (F\cap\c_a)=\dim F$ (i.e., to those closed cones $\c_a$ that contain a point $\xi_F$ in the relative interior of $F$).
We will say that these are the cones that are \emph{adjacent} to the face $F$.
The restriction to $F\cap \c_a$ of the function $m_K^{\CH}$ agrees with the restriction of the quasi-polynomial $p_{K,a}^{\CH}$.
Thus the multiplicity function $m_K^{\CH}$, restricted to a face $F$, is again a piecewise quasi-polynomial function.
Its degree drops, but if $F$ is a regular face (that is, a face that intersects the interior of the Weyl chamber), then the degree is the same on each cone $F\cap\c_a$ and can be computed by a formula analogous to the one in Theorem~\ref{theo:multH}.
In fact, the geometric Meinrenken-Sjamaar formula for multiplicities implies a reduction principle for the multiplicities on regular faces:
the function $m_K^{\CH}$ restricted to a regular face $F$ coincides with a multiplicity function $m_{K_0}^{\CH_0}$ for smaller data (see~\cite{BV2015} for details).
An example of this phenomenon in the context of Kronecker coefficients is given in Section~\ref{exa:632}.

\begin{remark}
  When $K$ is a torus, and $\c_a$ a cone of quasi-polynomiality adjacent to a facet $F$, then the quasi-polynomial $p_{K,a}^{\CH}$ vanishes on a certain number of affine hyperplanes parallel to the hyperplane generated by the facet $F$, leading to divisibility properties.
  We believe this is also the case more generally for the functions $p_{K,a}^{\CH}$, however, we do not have a precise guess.
  We give a striking example of this divisibility property in Section~\ref{exa:632}.
\end{remark}

%%%%%%%%%%%%%%%%%%%%%%%%%%%%%%%%%%%%%%%%%%%%%%%%%%%%%%%%%%%%%%%%%%%%%
\section{Branching rules}\label{sec:branch}
%%%%%%%%%%%%%%%%%%%%%%%%%%%%%%%%%%%%%%%%%%%%%%%%%%%%%%%%%%%%%%%%%%%%%

The results of the preceding section, in particular Theorem~\ref{thm:multT} and Lemma~\ref{lem:KandTmult}, can be turned into an algorithm for computing the multiplicities $m_K^\CH$ (see \cite[Theorem 59]{BV2015} for details).

As explained in the introduction, we will instead extend our considerations to the general branching problem for a pair of compact connected Lie groups.
Our main result, Theorem~\ref{theo:branchingsingular}, will give an explicit formula for the branching multiplicities that can be taylored for singular highest weights.
It forms the basis of our algorithm.

%....................................................................
\subsection{Branching cones}
%....................................................................

Consider a pair $K\subset G$ of compact connected Lie groups, with Lie algebras $\k,\g$ respectively.
Let $\pi:i\g^*\to i\k^*$ denote the corresponding projection.
As explained in Section~\ref{sec:setup}, we let $T_G$, $T_K$ denote maximal tori of $G$,$K$, with corresponding Cartan subalgebras $\t_\g$, $\t_\k$.
We may assume, and we do so, that $T_K\subset T_G$.
Given $\xi \in i\t^*_\g$, denote by $\overline {\xi}=\xi_{|_{i\t_\k}}$ its restriction to $i\t_\k$.
We may also choose compatible positive root systems on $K$, $G$: if $\lambda$ is dominant for $G$, then its restriction $\overline\lambda$ to $i\t_{\k}$ is dominant.
We denote the Weyl chambers by $i\t^*_{\g,\geq 0}$, $i\t^*_{\k,\geq 0}$, and the semigroups of dominant weights by $\Lambda_{G,\geq 0}$, $\Lambda_{K,\geq 0}$.

For $G\times K$, we denote by $\Lambda_{G,K,\geq 0}$ the sum $\Lambda_{G,\geq 0}\oplus\Lambda_{K,\geq 0},$ by $i\t^*_{\g,\k,\geq 0}$ the sum $i\t^*_{\g,\geq 0}\oplus i\t^*_{\k,\geq 0}$ of the positive Weyl chambers for $G,K$, and by $i\t^*_{\g,\k,> 0}$ its interior.
Define
$$V=\bigoplus_{\lambda\in \Lambda_{G,\geq 0}} V_\lambda^G\otimes V_{\lambda^*}^G.$$
So, under the action of $G\times K$,
$$V=\bigoplus_{\lambda,\mu} m_{G,K}(\lambda,\mu) V_\lambda^G\otimes V_{\mu^*}^K,$$
where $\lambda$ varies in $\Lambda_{G,\geq 0}$, and $\mu$ in $\Lambda_{K,\geq 0}$.
Here, $m_{G,K}(\lambda,\mu)$ is the multiplicity of the representation $V_\mu^K$ in the restriction of $V_\lambda^G$ to $K$.
The problem of \emph{branching rules} for $G,K$ is to determine these multiplicities.

We now define the \emph{branching cone} by
$$C_{G,K}=\Big\{ (\xi,\eta)\in i\t^*_{\g,\geq 0}\oplus i\t^*_{\k,\geq 0}, \eta \in \pi(G\cdot \xi)\Big\} \subseteq i\t^*_{\g,\k,\geq0}.$$
It is another instance of the theorems of Kirwan and Mumford that $C_{G,K}$ is a polyhedral cone, that the support of the function $m_{G,K}(\lambda,\mu)$ is contained in
 $C_{G,K}$ and that its asymptotic support is exactly the cone $C_{G,K}$ (cf.\ Proposition~\ref{prp:mumford}).

If $G=K$, the cone $C_{G,K}$ is just the diagonal $\{(\xi,\xi), \xi\in i\t^*_{\g,\geq 0}\}$ in $i\t^*_{\g,\g,\geq 0}$.
From now on, we will assume that \emph{no non-zero ideal of $\k$ is an ideal of $\g$}.
This condition excludes the preceding case and implies that the cone $C_{G,K}$ is solid~\cite{Duflo-pc}.
We say that a cone $\c\subset i\t_\k^*$ is solid if its interior is non-empty.
As anticipated after Theorem~\ref{theo:multH}, the Meinrenken-Sjamaar theorem also implies the following result:

\begin{theorem}[Meinrenken-Sjamaar, {\cite{M-S}}]\label{theo:mGK}
  There exists a cone decomposition $C_{G,K}=\bigcup_a \c_a$ into solid polyhedral cones $\c_a$ such that $m_{G,K}(\lambda,\mu)$ is given by a non-zero quasi-polynomial function on each $\c_a\cap (\Lambda_G\oplus \Lambda_K)$.
\end{theorem}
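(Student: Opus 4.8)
The plan is to deduce the statement from the $[Q,R]=0$ theorem of Meinrenken--Sjamaar~\cite{M-S}, applied to the Hamiltonian manifold $M=T^*G$ equipped with the $G\times K$-action in which $G$ acts by left translations and $K$ acts by right translations through the inclusion $K\subset G$. First I would recall that $T^*G\cong G\times\g^*$ carries its canonical symplectic form, that this $G\times K$-action is Hamiltonian with moment map $\Phi\colon T^*G\to i\g^*\oplus i\k^*$ proper onto its image, and that by Kirwan's convexity theorem $\Phi(T^*G)\cap i\t^*_{\g,\k,\geq 0}$ is a rational polyhedral cone; unravelling the formula for $\Phi$ one identifies this cone with the branching cone $C_{G,K}$. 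Under the standing hypothesis that no non-zero ideal of $\k$ is an ideal of $\g$, $C_{G,K}$ is solid~\cite{Duflo-pc}. Next I would identify the equivariant quantization of $M$: as a $G\times G$-representation it recovers the two-sided regular representation $\bigoplus_{\lambda\in\Lambda_{G,\geq 0}} V_\lambda^G\otimes V_{\lambda^*}^G = V$, and restricting the right factor to $K$ yields exactly $V=\bigoplus_{\lambda,\mu} m_{G,K}(\lambda,\mu)\,V_\lambda^G\otimes V_{\mu^*}^K$. Thus $m_{G,K}(\lambda,\mu)$ is the multiplicity of the irreducible $G\times K$-representation $V_\lambda^G\otimes V_{\mu^*}^K$ in the quantization of $M$.

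Now I would invoke $[Q,R]=0$: it identifies $m_{G,K}(\lambda,\mu)$ with the quantization $\dim Q(M_{(\lambda,\mu)})$ of the symplectic reduction of $M$ at the $G\times K$-coadjoint orbit through $(\lambda,\mu)$, a compact (possibly singular) symplectic orbifold. The wall-and-chamber structure of $\Phi$ --- the locus in $C_{G,K}$ across which the topology of the reduced space jumps --- subdivides $C_{G,K}$ into finitely many closed rational polyhedral subcones $\c_a$, all of dimension $\dim C_{G,K}$ and meeting along common faces, which by solidity of $C_{G,K}$ may be taken solid. On the relative interior of each $\c_a$ the reduced spaces form a family of symplectic orbifolds with fixed underlying orbifold and with prequantum data depending linearly on $(\lambda,\mu)$, so Kawasaki's orbifold Riemann--Roch theorem shows that $\dim Q(M_{(\lambda,\mu)})$ is a quasi-polynomial function of $(\lambda,\mu)$ there; by continuity this quasi-polynomial agrees with $m_{G,K}$ on all of $\c_a\cap(\Lambda_G\oplus\Lambda_K)$. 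For non-vanishing: each $\c_a$ meets the relative interior of $C_{G,K}$, and the branching analogue of Mumford's theorem (Proposition~\ref{prp:mumford}, from Kirwan--Mumford) gives $m_{G,K}(k\lambda,k\mu)\neq 0$ for some $k>0$ at such a point, so the quasi-polynomial on $\c_a$ cannot vanish identically.

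The main obstacle is precisely the machinery being cited: establishing $[Q,R]=0$ for the \emph{non-compact} manifold $T^*G$ with proper moment map, and extracting from the structure of symplectic reductions across walls, together with Kawasaki Riemann--Roch, the piecewise quasi-polynomiality in the joint variable $(\lambda,\mu)$. Since this is exactly the content of~\cite{M-S} (and is reviewed in~\cite{BV2015}), the work here reduces to the correct identification of the Hamiltonian manifold $M=T^*G$, of its quantization with $V$, and of its moment cone with $C_{G,K}$; finiteness of the chamber decomposition, solidity of its pieces, and non-vanishing of the associated quasi-polynomials then follow from Kirwan convexity, the Duflo solidity criterion, and Mumford's theorem respectively.
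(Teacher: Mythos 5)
The paper does not supply a proof of this theorem: it cites it to~\cite{M-S} and, in the remark following Theorem~\ref{theo:multH}, merely notes that it is the instance of the $[Q,R]=0$ theorem for the Hamiltonian $G\times K$-manifold $M=T^*G$, referring to~\cite{BV2015} for details. Your sketch elaborates exactly that intended route (Peter--Weyl to identify the quantization of $T^*G$ with $V$, Kirwan convexity for the moment cone, $[Q,R]=0$ plus Kawasaki's orbifold Riemann--Roch for the piecewise quasi-polynomiality, and Mumford's theorem for non-vanishing), so it is correct and matches the paper's approach.
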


In particular, for any pair  $(\lambda,\mu)$ of dominant weights contained in $C_{G,K}$, the function $k\mapsto m_{G,K}(k\lambda,k\mu)$ is a quasi-polynomial and thus of the form
$m_{G,K}(k\lambda,k\mu)=\sum_{i=0}^{N}E_i(k) k^i,$
where the $E_i(k)$ are periodic functions of $k$. This formula is valid \emph{for all} $k\geq 0$; in particular, $E_0(0)=1$.

To describe the cone $C_{G,K}$ is difficult and has been the object of numerous works, notably by Berenstein-Sjamaar, Belkale-Kumar, Kumar, and Ressayre.
We refer to the survey article~\cite{Brion-bourbaki} for further detail.
The complete description of the multiplicity function $m_{G,K}$, and, in particular, the decomposition of $C_{G,K}$ into $\bigcup_a \c_a$ is even more so.
However, we will give an algorithm where, given as input $(\lambda,\mu)$, the output is the dilated function $k\mapsto m_{G,K}(k\lambda,k\mu)$.
In particular, we can  test if the point $(\lambda,\mu)$ is in the cone $C_{G,K}$ or not, according to whether the output is not zero or zero.

%....................................................................
\subsection{A piecewise quasi-polynomial branching formula}\label{newsec:branch}
%....................................................................

In principle, the function $m_{G,K}(\lambda,\mu)$ can be computed by the Heckman formula~\cite{Heck}.
However, in the cases that we are interested in, the Heckman formula is of formidably difficulty, but on the other hand the parameter $\lambda$ will often be quite singular.
In this section, we will obtain formulae for $m_{G,K}(\lambda,\mu)$ that are perhaps less beautiful but of much smaller complexity, taking avantage of the fact that $\lambda$ vanishes on a large number of coroots $H_\alpha$.
We will comment in Remark~\ref{heckversussingular} on the advantages of writing a specific formula for the singular case instead of using Kostant-Heckman branching theorem.

To start, we consider the list $\Psi$ of \emph{non-zero} restrictions of the roots $\Delta_\g^+$ to $i\t_\k$ and choose an order.
We say that $\Psi$ is the list of \emph{restricted roots} for the pair $\g,\k$.
As in Definition~\ref{def:admissible etc}, we write $\CA(\Psi)$ for the set of $\Psi$-admissible hyperplanes.

\begin{definition}\label{def:tope for F}
  Let $H\in\CA(\Psi)$ be an admissible hyperplane, with normal vector $X\in i\t_k$, so that $H=X^\perp$, and $w \in W_\g$ an element in the Weyl group of $G$.
  We define the hyperplane
  \[ H(w) = \left\{ (\xi,\nu) \in i\t_\g^* \oplus i\t_\k^*, \langle \overline{w(\xi)}, X \rangle - \langle \nu, X \rangle = 0 \right\}. \]
  Let $\CF$ denote the finite set of hyperplanes in $i\t_\g^*\oplus i\t_\k^*$ obtained by varying $H$ in $\CA(\Psi)$ and $w$ in the Weyl group of $G$.

  We say that $\tau$ is a \emph{tope for $\CF$} if it is a connected component of the complement of the union of the hyperplanes in $\CF$.
\end{definition}

Any tope $\tau$ for $\CF$ is an open polyhedral conic subset of $i\t_\g^* \oplus i\t_\k^*$.
By definition, if $(\xi,\nu)\in\tau$ then for any $\Psi$-admissible hyperplane $H=X^\perp$ and $w\in \CW_\g$, we have that
$\langle \overline{w(\xi)} - \nu,X\rangle \neq 0$.
% $$\langle \overline{w(\xi)} - \nu,X\rangle = \langle \xi,w^{-1}X\rangle -\langle \nu,X\rangle \neq 0.$$
Thus, for each $w\in \CW_\g$, the element $\overline{w(\xi)}-\nu$ is $\Psi$-regular and hence determines a $\Psi$-tope $\a(\overline{w(\xi)}-\nu)$.
This tope only depends on $w$ and $\tau$, but not on the choice of $(\xi,\nu)\in\tau$, so the following definition makes sense:

\begin{definition}\label{def:induced tope}
  Let $\tau$ be a tope for $\CF$ and $w\in \CW_\g$.
  We denote by $\a_w^\tau$ the unique $\Psi$-tope equal to $\a(\overline{w(\xi)}-\nu)$ for all $(\xi,\nu)\in\tau$.
\end{definition}

The facets of the cones $\c_a$ in Theorem~\ref{theo:mGK} generate hyperplanes that belong to the family $\CF$ defined above, as follows from the description of the Duistermaat-Heckman measure~\cite{Heck}.
Thus, if $\c_a$ is such a cone and $\tau$ a tope for $\CF$, then $\tau\cap i\t^*_{\g,\k,\geq 0}$ is either fully contained in $\c_a$ or disjoint from $\c_a$.
In particular, each closed cone $\c_a$ is the union of the closures of the sets $\tau\cap i\t^*_{\g,\k,\geq 0}$ over the $\tau$ such that $\tau\cap \c_a$ is non-empty.
In general, there might be several topes $\tau$ needed to obtain $\c_a$.

\medskip

We now extend these definitions so that we can later take advantage of singular highest weights $\lambda$.
Thus fix a subset $\Sigma$ of the simple roots in $\Delta_\g^+$.
Let $i \t_{\g,\Sigma}^*$ be the set of the elements $\xi\in i\t_\g^*$ such that $\la \xi,H_\alpha\ra=0$ for all $\alpha\in \Sigma$.
We define correspondingly $\t^*_{\g,\k,\Sigma, \geq 0}=i\t^*_{\g,\Sigma} \oplus i\t^*_\k$,
$\Lambda^{\Sigma}_{G}=\Lambda_G \cap i \t_{\g,\Sigma}^*$ (a lattice in $i\t_{\g, \Sigma}^*$), $\Lambda_{G,K,\geq 0}^\Sigma=\Lambda_{G,K,\geq 0}\cap i \t^*_{\g, \k,\Sigma}$, etc.
Lastly, we define the corresponding branching cone by
$$C^\Sigma_{G,K}=\{(\xi,\nu)\in C_{G,K}, \xi\in i \t_{\g,\Sigma}^*\} \subseteq \t^*_{\g,\k,\Sigma, \geq 0}.$$
If $\Sigma$ is empty, then $C^\Sigma_{G,K}=C_{G,K}$.
Otherwise, $(\xi,\nu)\in C^\Sigma_{G,K}$ if $\nu$ belongs to the projection to $i\k^*$ of the singular orbit $G\xi$.
Thus the cone $C^\Sigma_{G,K}$ is contained in the cone $C_{G,K}$ and is in its boundary if $\Sigma$ is non-empty.
We would like to derive formulae for the function $m_{G,K}$ restricted to this cone $C^\Sigma_{G,K}$.

Note that the cone $C^\Sigma_{G,K}$ is solid in $i \t_{\g, \Sigma}^*$ if and only if there exists $\xi\in i \t_{\g,\Sigma}^*$ such that the projection to $i\k^*$ of the singular orbit
$G\xi$ has a non-zero interior in $i\k^*$.
In other words, $C^\Sigma_{G,K}$ is solid if and only if the Kirwan polytope $\pi(G\xi)\cap i\t^*_\k$ for some $\xi\in i\t_{\g,\Sigma}^*$ is solid.

The following example will later be used in the algorithm to determine Kronecker coefficients.

\begin{example}[{\cite{V-W}}]\label{ex:kron solid}
  Let $s\geq3$ and $n_2,\dots,n_s\geq2$, and consider the embedding of $K=SU(n_2)\times\dots\times SU(n_s)$ into $G=U(M)$, where $M=n_2 \cdots n_s$.
  Let $\lambda\in P\Lambda_{U(M),\geq0}$ be a dominant weight with two or more non-zero coordinates.
  Then $\pi(G\lambda)$ has interior in $i\k^*$.
  It follows that the cones $C^\Sigma_{G,K}$ are solid whenever $\Lambda^{\Sigma}_{U(M),\geq0}$ contains such a $\lambda$.
  % In this case, we can apply our method specialized to $\Sigma$, described below, to compute the Kronecker coefficients $g(\nu_1,\nu_2,\nu_3)$ for $\nu_1\in P\Lambda^{\Sigma}_{U(n_2n_3),\geq0}$ and arbitrary $\nu_2\in P\Lambda_{U(n_2),\geq0}$, $\nu_3\in P\Lambda_{U(n_3),\geq0}$.
\end{example}

The following definition extends Definition~\ref{def:tope for F}.

\begin{definition}\label{def:tope for F Sigma}
  Let $\CF_\Sigma$ denote the set of hyperplanes in $i\t^*_{\g, \Sigma}\oplus i\t^*_\k$ of the form
  \[ \left\{ (\xi,\nu) \in i\t_{\g,\Sigma}^* \oplus i\t_\k^*, \langle \overline{w(\xi)}, X \rangle - \langle \nu, X \rangle = 0 \right\}, \]
  obtained by varying $H=X^\perp$ in $\CA(\Psi)$ and $w$ in the Weyl group $\CW_\g$.
  Thus each element in $\CF_\Sigma$ is an intersection of a hyperplane $H(w)$ with $i\t^*_{\g,\Sigma} \oplus i\t^*_\k$.

  We say that $\tau_\Sigma$ is a \emph{tope for $\CF_\Sigma$} if it is a connected component of the complement of the union of the hyperplanes in $\CF_\Sigma$.
\end{definition}

Note that any tope $\tau_\Sigma$ for $\CF_\Sigma$ is contained in a unique tope $\tau$ for $\CF$.

If $\tau_\Sigma$ is a tope for $\CF_\Sigma$, then $\tau_\Sigma\cap C^\Sigma_{G,K}$ is empty when $C^\Sigma_{G,K}$ is not solid.
Otherwise, if the cone $ C^\Sigma_{G,K}$ is solid, then it is the union of the closures of the sets $\tau_\Sigma\cap \t^*_{\g,\k,\Sigma,\geq0}$ contained in $C^\Sigma_{G,K}$.

\medskip

Our goal in the following now is to give quasi-polynomial formulas for the function $m_{G,K}$ restricted to $\tau_\Sigma\cap C_{G,K}^\Sigma$, where $\tau_\Sigma$ is a tope for $\CF_\Sigma$.
Similarly to in Section~\ref{sec:multiplicities}, we first discuss the multiplicities for the maximal torus $T_G$.
Throughout the following \emph{we will assume that $\Sigma$ is a subset of the simple roots in $\Delta_\g^+$ such that $C^\Sigma_{G,K}$ is solid}.
Let $\lp$ be the corresponding Levi subalgebra of $\g$ with simple roots $\Sigma$.
Denote by $\Delta_\lp^+$ its positive root system and by $\CW_{\lp}$ its Weyl group.
Lastly, define $\Delta_\uu=\Delta_\g^+ \setminus \Delta_\l^+$.
Then we have the following character formula:

\begin{lemma}\label{lem:restricted weyl}
  For any $\lambda\in\Lambda^\Sigma_{G,\geq 0}$, the character of the irreducible representation $V_\lambda^G$ is given by
  $${\chi_{\lambda}}_{|T_G}=\sum_{[w] \in \CW_\g/\CW_{\lp}}\frac{e^{w(\lambda)}}{\prod_{\alpha \in \Delta_{\uu}}(1-e^{-w(\alpha)})}.$$
\end{lemma}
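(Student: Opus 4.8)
The plan is to reorganize the ordinary Weyl character formula for $V_\lambda^G$ according to the cosets $\CW_\g/\CW_\lp$. Write $\rho_\uu=\tfrac12\sum_{\alpha\in\Delta_\uu}\alpha$ and $\rho_\lp=\rho_\g-\rho_\uu=\tfrac12\sum_{\alpha\in\Delta_\lp^+}\alpha$. Two elementary facts drive the argument: since $\lambda\in i\t^*_{\g,\Sigma}$, every element of $\CW_\lp$ fixes $\lambda$; and $\CW_\lp$ permutes $\Delta_\uu$ (because $\uu$ is the nilradical of the parabolic with Levi $\lp$, hence $\CW_\lp$-stable as a set of $\t_\g$-weights), so it also fixes $\rho_\uu$.

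First I would put Weyl's formula in ``localized'' form. Setting $A(\mu)=\sum_{w\in\CW_\g}\epsilon(w)e^{w(\mu)}$, the Weyl denominator is $A(\rho_\g)=e^{\rho_\g}\prod_{\alpha\in\Delta_\g^+}(1-e^{-\alpha})$ and is $\epsilon$-anti-invariant under $\CW_\g$, so $A(\rho_\g)=\epsilon(w)\,e^{w(\rho_\g)}\prod_{\alpha\in\Delta_\g^+}(1-e^{-w(\alpha)})$ for every $w$. Dividing $A(\lambda+\rho_\g)$ by this turns the Weyl character formula into
\[ {\chi_\lambda}_{|T_G}=\sum_{w\in\CW_\g}\frac{e^{w(\lambda)}}{\prod_{\alpha\in\Delta_\g^+}(1-e^{-w(\alpha)})}, \]
an identity in the fraction field of the group algebra $\Q[\Lambda_G]$ (the apparent poles cancel). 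Applying the same identity to the reductive Lie algebra $\lp$ (same maximal torus $T_G$, root system $\Delta_\lp$, Weyl group $\CW_\lp$) with trivial highest weight yields the auxiliary relation $1=\sum_{v\in\CW_\lp}\prod_{\alpha\in\Delta_\lp^+}(1-e^{-v(\alpha)})^{-1}$.

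Next I would fix representatives of $\CW_\g/\CW_\lp$ and write each $w\in\CW_\g$ as $w=uv$ with $v\in\CW_\lp$. Since $v$ fixes $\lambda$, $e^{w(\lambda)}=e^{u(\lambda)}$; splitting $\Delta_\g^+=\Delta_\lp^+\sqcup\Delta_\uu$ and using that $\CW_\lp$ permutes $\Delta_\uu$, the factor $\prod_{\alpha\in\Delta_\uu}(1-e^{-w(\alpha)})=\prod_{\alpha\in\Delta_\uu}(1-e^{-u(\alpha)})$ does not depend on $v$. Hence the sum factors as
\[ {\chi_\lambda}_{|T_G}=\sum_{[u]\in\CW_\g/\CW_\lp}\frac{e^{u(\lambda)}}{\prod_{\alpha\in\Delta_\uu}(1-e^{-u(\alpha)})}\left(\sum_{v\in\CW_\lp}\frac{1}{\prod_{\alpha\in\Delta_\lp^+}(1-e^{-(uv)(\alpha)})}\right). \]
Applying the ring automorphism $e^\mu\mapsto e^{u(\mu)}$ (extended to the fraction field of $\Q[\Lambda_G]$) to the auxiliary relation shows the inner sum equals $1$ for every $u$, which is the claimed formula. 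One then checks, once more via the two facts above, that each summand on the right is independent of the chosen representative of $[u]$, so the identity is intrinsic.

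The substantive step is recognizing the inner $\CW_\lp$-sum as nothing but the localized Weyl character formula for the reductive algebra $\lp$ with trivial highest weight, transported by $u$; everything else is bookkeeping. In particular one must check that $\epsilon$ restricted to $\CW_\lp$ is the sign character of $\CW_\lp$ (clear, since a reflection in a root of $\lp$ has determinant $-1$ on $\t_\g$) and that all manipulations involving $\prod_{\alpha}(1-e^{-w(\alpha)})^{-1}$ take place in the fraction field of $\Q[\Lambda_G]$, where the substitutions $e^\mu\mapsto e^{u(\mu)}$ are defined. Beyond this there is no genuine obstacle: once Weyl's formula is localized, the singularity of $\lambda$ along $\Sigma$ and the $\CW_\lp$-invariance of $\Delta_\uu$ do all the work.
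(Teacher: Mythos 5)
Your proof is correct, and it takes the approach the paper intends: the paper merely remarks that the lemma is ``easily obtained from the Weyl character formula,'' and your argument is precisely the natural way to carry that out. Writing Weyl's formula in its localized form $\chi_\lambda=\sum_{w\in\CW_\g} e^{w(\lambda)}/\prod_{\alpha\in\Delta_\g^+}(1-e^{-w(\alpha)})$, grouping by cosets of $\CW_\lp$, and recognizing the inner sum as the (transported) trivial-character identity for $\lp$ is exactly the manipulation being alluded to. All three ingredients you isolate — $\CW_\lp$ fixes $\lambda$, $\CW_\lp$ permutes $\Delta_\uu$, and the factorization $\Delta_\g^+=\Delta_\lp^+\sqcup\Delta_\uu$ — are correctly used, and your check that the final summand is independent of the choice of coset representative closes the argument.
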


\begin{remark}
  The formula in Lemma~\ref{lem:restricted weyl} is a special case of the Atiyah-Bott fixed point formula, and easily obtained from the Weyl character formula.
  When $\lambda$ is regular, $\Delta_\uu=\Delta^+_\g$, it is exactly the Weyl character formula.
  When $\lambda$ is singular, this sum is over an eventually much smaller set of elements $w$, and the summands are simpler functions.
  The extreme case is $\lambda=0$, with just one term, equal to $1$.
\end{remark}

\subsubsection{The ``regular'' case}\label{regular}

We start by considering the case where all roots in $\Delta_\g$ have non-zero restriction to $i\t_\k$.
The general situation will be treated in Section~\ref{notregular}.

In this case, the space $i\t_\k$ contains a regular (with respect to $\Delta_\k$) element $Y$, which is regular also for $\Delta_\g$.
We use this element to define compatible positive root systems $\Delta^+_\g$ and $\Delta^+_\k$ as follows:
$$\Delta^+_\g=\{ \alpha \in \Delta_\g, \alpha (Y)>0\}, \quad \Delta^+_\k=\{ \alpha \in \Delta_\k, \alpha (Y)>0\}.$$
Thus the list $\Psi$ consists of the restrictions of $\Delta_\g^+$, repeated according to their multiplicities (we implicitly choose an order):
$$\Psi=[\overline{\alpha}, \alpha \in \Delta^+_\g].$$
The list $\Psi$ contains $\Delta_\k^+$.
By construction, all elements $\psi$ in $\Psi$ satisfy $\langle \psi, Y\rangle >0$.

\begin{example}\label{ex:kron restri roots}
  Let $G=U(n_2n_3)$ and $K=SU(n_2)\times SU(n_3)$.
  We consider $\t_\g$ the Cartan subalgebra of $\g$ given by the diagonal matrices and $\t_\k=\t_2\times\t_3$, the Cartan subalgebras of $\k$ given by the corresponding diagonal matrices, with trace zero.
  The embedding of $K$ in $G$ leads to the embedding of $i\t_2\times i\t_3 \rightarrow i\t$ given by
  \begin{equation}\label{eq:kron embedding}
  \begin{aligned}
    &(a_1,\ldots,a_{n_2})\times(b_1,\ldots,b_{n_3}) \rightarrow (a_1+b_1, a_2+b_1,\ldots,a_{n_2}+b_1, \\
    &\qquad a_1+b_2, a_2+b_2, \ldots, a_{n_2}+b_2, \;\ldots\;, a_{1}+b_{n_3},\ldots, a_{n_2}+ b_{n_3}).
  \end{aligned}
  \end{equation}
  The list of restricted positive roots is
  \[ \Psi=[(a_i-a_j+b_k-b_\ell)], \]
  where $1\leq i<j\leq n_2$ and $1\leq k<\ell\leq n_3$, so all restricted roots are non-zero.
  We take the lexicographic order. % on tuples.
  It is easy to see that the standard positive root systems $\Delta_\k^+$ and $\Delta_\g^+$ introduced at the beginning of Section~\ref{sec:setup} are compatible.
  Explicitly, we may choose
  \begin{equation*}
  \begin{aligned}
    Y=\bigl(&(n_2-1, n_2-3, \ldots,-n_2+1), \\
    &((n_3-1)n_2, (n_3-3)n_2, \ldots,(-n_3+1)n_2)\bigr].
  \end{aligned}
  \end{equation*}
  For example, for $n_2=3, n_3=2$, $Y=((2,0,-2), (3,-3))$.
  Then the embedded element is $(5,3,1,-1,-3,-5)$.
  Both are regular dominant with respect to the standard positive root systems for $K$ and $G$, respectively.

  An analogous construction works more generally for $G=U(n_2\cdots n_s)$ and $K=SU(n_2)\times\dots\times SU(n_s)$, $s\geq3$.
\end{example}

Using Lemma~\ref{lem:restricted weyl}, we can write the restriction of the character $\chi_\lambda$ to $T_K$ as a sum of meromorphic functions, indexed by cosets $[w]\in\CW_\g/\CW_\lp$:
\begin{equation}\label{eq:restricted character}
  {\chi_{\lambda}}_{|T_K}=\sum_{[w] \in \CW_\g/\CW_{\lp}}\frac{e^{{\overline{w(\lambda)}}}}{\prod_{\alpha \in \Delta_{\uu}}(1-e^{-{\overline{w(\alpha)}}})}.
\end{equation}
Note that none of the denominators are identically zero, since by assumption the restricted roots $\overline{w(\alpha)}$ are all non-zero.

To compute $m_{G,K}(\lambda,\mu)$ for $\Lambda^\Sigma_{G,K,\geq 0}$ by iterated residues, we consider the following function of $z\in (\t_\k)_\C$:
\begin{equation}\label{eqSSigma}
S^{\Sigma, w}_{\lambda,\mu}(z) = \left( \prod_{\beta\in \Delta_\k^+}(1-e^{-\la \beta,z\ra}) \right) \frac{e^{\la{\overline{w(\lambda)}-\mu,z\ra}}}{\prod_{\alpha \in \Delta_{\uu}}(1-e^{-{\la\overline{w(\alpha)},z\ra}})}.
\end{equation}
Now let $q=q(\Psi)$ denote the index of $\Psi$ in $L=\Lambda_K$, the weight lattice of $K$.
Recall that if $\tau_\Sigma$ is a tope for $\CF_\Sigma$ then it is contained in a unique tope $\tau$ for $\CF$, which in turn defines $\Psi$-topes $\a_w^{\tau}$ for any $w\in\CW_\g$ (Definition~\ref{def:induced tope}).
The tope $\a_w^\tau$ in fact only depends on the coset $[w] \in \CW_\g / \CW_\lp$.
Lastly, recall that ${\mathcal{OS}}(\Psi,\a^\tau_w)$ is the set of OS bases adapted to the tope $a_w^{\tau}$ (Def. \ref{OS}).

\begin{definition}\label{def:branchfunct}
  Let $\tau_\Sigma$ be a tope for $\CF_\Sigma$ and $\tau$ the unique tope for $\CF$ that contains $\tau_\Sigma$.
  Define
  $$p_\tau^\Sigma(\lambda,\mu)=\sum_{[w] \in \CW_\g/ \CW_\lp} \sum_{\gamma\in \Gamma_K/q\Gamma_K} \sum_{\overrightarrow\sigma \in \mathcal{OS}(\Psi,\a^\tau_w) } \Res_{\overrightarrow\sigma} S^{\Sigma,w}_{\lambda,\mu}(z+\frac{2i\pi\gamma}{q}).$$
\end{definition}

\begin{remark}\label{rem:branchfunct quasipol}
  As in Remark~\ref{rem:szenes vergne quasipol}, it is clear that $p_\tau^\Sigma(\lambda,\mu)$ is a quasi-polynomial function on $\Lambda^\Sigma_G \oplus \Lambda_K$.
\end{remark}

We now state our branching formula:

\begin{theorem}\label{theo:branch}
  Assume that $C^\Sigma_{G,K}$ is solid and that all roots in $\Delta_\g$ have non-zero restriction to $i\t_\k$.
  Let $\tau_\Sigma \subset i\t^*_{\g,\k,\Sigma}$ be a tope for $\CF_\Sigma$, and $\tau$ the tope for $\CF$ that contains $\tau_\Sigma$.
  Let $(\lambda,\mu)\in \overline \tau_\Sigma\cap \Lambda^\Sigma_{G,K,\geq 0}$.
  Then:
  \begin{enumerate}
  \item[(i)] If $(\lambda,\mu)\notin C^\Sigma_{G,K}$, then
    $$m_{G,K}(\lambda, \mu)=p^\Sigma_\tau(\lambda,\mu)=0.$$
  \item[(ii)] If $(\lambda,\mu)\in C^\Sigma_{G,K}$ \emph{and} the tope $\tau_\Sigma$ intersects $C^\Sigma_{G,K}$ (hence, $\overline{\tau_\Sigma}\cap i\t^*_{\g,\k,\Sigma,\geq0} \subseteq C^\Sigma_{G,K}$), then
    $$m_{G,K}(\lambda, \mu)=p^\Sigma_\tau(\lambda,\mu).$$
\end{enumerate}
\end{theorem}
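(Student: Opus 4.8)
The plan is to reduce the branching multiplicity to the ``constant term'' (coefficient of $e^{0}$) of a genuine Laurent polynomial, and then to evaluate that constant term summand by summand using the Szenes--Vergne iterated residue formula (Theorem~\ref{thm:multT}); the compatibility of the different expansions will be controlled exactly by the choice of the tope $\tau_\Sigma$. The first step is the exact identity
\[
  m_{G,K}(\lambda,\mu)\;=\;\bigl[\text{coefficient of }e^{0}\bigr]\ \sum_{[w]\in\CW_\g/\CW_{\lp}} S^{\Sigma,w}_{\lambda,\mu}(z),
\]
valid for \emph{every} $(\lambda,\mu)\in\Lambda^\Sigma_{G,K,\geq0}$, however singular $\lambda$ is. This is pure character theory: by the Weyl character formula for $K$ one has $m_{G,K}(\lambda,\mu)=\sum_{w'\in\CW_\k}\epsilon(w')\cdot(\text{multiplicity of the weight }\mu+\rho_\k-w'(\rho_\k)\text{ in }{\chi_\lambda}_{|T_K})$; multiplying ${\chi_\lambda}_{|T_K}$ by the partial Weyl denominator $\prod_{\beta\in\Delta_\k^+}(1-e^{-\langle\beta,z\rangle})=e^{-\langle\rho_\k,z\rangle}\sum_{w'\in\CW_\k}\epsilon(w')e^{\langle w'(\rho_\k),z\rangle}$ and by $e^{-\langle\mu,z\rangle}$ converts this alternating sum into the coefficient of $e^{0}$, and expanding ${\chi_\lambda}_{|T_K}$ by Lemma~\ref{lem:restricted weyl} in the form~\eqref{eq:restricted character} exhibits the function being expanded as precisely $\sum_{[w]}S^{\Sigma,w}_{\lambda,\mu}$. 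Working with the Levi $\lp$ attached to $\Sigma$ is exactly what keeps this sum short when $\lambda$ is singular.

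Next I would compute this coefficient of $e^{0}$ one summand at a time. Assume first that $(\lambda,\mu)$ lies in the \emph{open} tope $\tau_\Sigma$, so each $\overline{w(\lambda)}-\mu$ is $\Psi$-regular and the $\Psi$-tope $\a_w^\tau$ of Definition~\ref{def:induced tope} is well defined. After normalising signs so that every factor $1-e^{-\langle\overline{w(\alpha)},z\rangle}$ has $\overline{w(\alpha)}$ in the half-space $\{\langle\cdot,Y\rangle>0\}$ (legitimate because, under the ``regular'' hypothesis, all restricted roots are non-zero), $S^{\Sigma,w}_{\lambda,\mu}$ becomes $\pm$ a shift of a generating function for a Kostant partition function attached to a sublist of $\Psi$, the numerator $\prod_{\beta\in\Delta_\k^+}(1-e^{-\langle\beta,z\rangle})$ turning that plain partition function into a finite alternating sum of its shifts. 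Applying Theorem~\ref{thm:multT} with the tope $\a_w^\tau$ term by term then identifies the coefficient of $e^{0}$ in the Laurent expansion of $S^{\Sigma,w}_{\lambda,\mu}$ taken in the direction of $\a_w^\tau$ with $\sum_{\gamma\in\Gamma_K/q\Gamma_K}\sum_{\overrightarrow\sigma\in\mathcal{OS}(\Psi,\a_w^\tau)}\Res_{\overrightarrow\sigma}S^{\Sigma,w}_{\lambda,\mu}(z+\tfrac{2i\pi\gamma}{q})$, the sum over $\gamma$ producing the periodic coefficients. Because all the tope directions $\a_w^\tau$ arise from the single tope $\tau_\Sigma$, these individual expansions are the Laurent expansion of the \emph{meromorphic} function $\sum_{[w]}S^{\Sigma,w}_{\lambda,\mu}$ in one common region, so the coefficients add up correctly, giving $m_{G,K}(\lambda,\mu)=p^\Sigma_\tau(\lambda,\mu)$ for all $(\lambda,\mu)\in\tau_\Sigma\cap\Lambda^\Sigma_{G,K,\geq0}$.

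Finally I would pass to the closure and prove the vanishing statement. The function $p^\Sigma_\tau$ is quasi-polynomial by construction (Remark~\ref{rem:branchfunct quasipol}), and $m_{G,K}$ is quasi-polynomial on $\overline{\tau_\Sigma}\cap C^\Sigma_{G,K}$ by the Meinrenken--Sjamaar theorem (Theorem~\ref{theo:mGK}), because the facets of its chambers $\c_a$ generate hyperplanes lying in $\CF$, so $\tau_\Sigma$ is contained in, or disjoint from, each such $\c_a$. If $\tau_\Sigma$ meets $C^\Sigma_{G,K}$, then $\overline{\tau_\Sigma}\cap\t^*_{\g,\k,\Sigma,\geq0}\subseteq C^\Sigma_{G,K}$; two quasi-polynomials on $\Lambda^\Sigma_G\oplus\Lambda_K$ agreeing on the full-dimensional set of dominant lattice points of the open tope agree on its closure, which is~(ii), and this simultaneously shows that case~(i) cannot occur when $\tau_\Sigma$ meets $C^\Sigma_{G,K}$. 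If instead $\tau_\Sigma$ is disjoint from $C^\Sigma_{G,K}$, then $m_{G,K}\equiv0$ on $\tau_\Sigma$ by Mumford's theorem (the analogue of Proposition~\ref{prp:mumford} for $C_{G,K}\supseteq C^\Sigma_{G,K}$), and $p^\Sigma_\tau$, being a quasi-polynomial that vanishes on a full-dimensional cone of lattice points, vanishes identically; this is~(i).

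The main obstacle is the middle step: showing rigorously that summing the $e^{0}$-coefficients of the separate meromorphic summands $S^{\Sigma,w}_{\lambda,\mu}$, each expanded in its own tope $\a_w^\tau$, reproduces the $e^{0}$-coefficient of their polynomial sum. This is where one must exploit that all the $\a_w^\tau$ descend from the single tope $\tau_\Sigma$ for $\CF_\Sigma$ (so the chosen expansion regions have a common point) together with the \emph{exactness} of Theorem~\ref{thm:multT} on tope closures, rather than mere asymptotic validity; and it is where one must keep careful track of the numerator $\prod_{\beta\in\Delta_\k^+}(1-e^{-\langle\beta,z\rangle})$ and of the discrepancy between the actual pole list of $S^{\Sigma,w}_{\lambda,\mu}$ and the full list $\Psi$ used to build the Orlik--Solomon bases (Orlik--Solomon bases of $\Psi$ not supported on the poles of $S^{\Sigma,w}_{\lambda,\mu}$ contributing nothing).
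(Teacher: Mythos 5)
Your proposal follows essentially the same route as the paper's proof: reduce to $T_K$-multiplicities via the Weyl character formula (Lemma~\ref{lem:KandTmult}), expand $\chi_\lambda|_{T_K}$ by Lemma~\ref{lem:restricted weyl}, polarise the restricted roots with the regular element $Y$, apply the Szenes--Vergne residue formula (Theorem~\ref{thm:multT}) to the tope $\a_w^\tau$ attached to each coset $[w]$, absorb the discrepancy between $\Psi_{w,\uu}$ and $\Psi$ via Lemma~\ref{lem:OS bases}, and finish by unique continuation of quasi-polynomials (Lemma~\ref{lem:equaquasipoly}). The one place to be careful is the intermediate claim that the residue identity holds for \emph{all} $(\lambda,\mu)\in\tau_\Sigma\cap\Lambda^\Sigma_{G,K,\geq0}$: applying Theorem~\ref{thm:multT} term by term to the finite alternating sum of shifts produced by the numerator $\prod_{\beta\in\Delta_\k^+}(1-e^{-\langle\beta,z\rangle})$ requires each shifted argument $\overline{w(\lambda)}-\mu+g^\Sigma_w-(\rho_\k-\tilde w\rho_\k)$ to lie in $\overline{\a^\tau_w}$, which only holds once $(\lambda,\mu)$ is sufficiently far from the walls of $\CF_\Sigma$; the paper restricts to that sub-cone first and then extends by Lemma~\ref{lem:equaquasipoly}, and your write-up should do the same.
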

\begin{proof}
  The set $\tau_\Sigma\cap C_{G,K}^{\Sigma}$ is contained in $\tau \cap C_{G,K}$.
  We know from Theorem~\ref{theo:mGK} and the discussion below Definition~\ref{def:induced tope} that, on $\overline\tau \cap \Lambda_{G,K,_\geq 0}$, the function $m_{G,K}$ is given by a quasi-polynomial formula. A fortiori, its restriction to $\overline {\tau_\Sigma} \cap \Lambda^{\Sigma}_{G,K, \geq 0}$ is likewise given by a quasi-polynomial.
  Thus it is sufficient to prove that $m_{G,K}(\lambda,\mu)=p^\Sigma_\tau(\lambda,\mu)$ when $(\lambda,\mu)\in\tau_\Sigma \cap \Lambda^\Sigma_{G,K,\geq0}$ is sufficiently far away from all the walls belonging to $\CF_\Sigma$ (see the uniqueness result in Lemma~\ref{lem:equaquasipoly} below).

  We start with the restricted character ${\chi_\lambda}_{|T_K}$ as given by Eq.~\eqref{eq:restricted character}.
  For each $w\in\CW_\g$, we can rewrite the formula by polarizing the linear forms $\overline{w(\alpha)}$ in the denominator using our regular element $Y$:
  if $\la\overline{w(\alpha)},Y\ra<0$, we replace $\overline{w(\alpha)}$ by its opposite; we then make use of the identity $\frac{1}{1-e^{-\beta}}=-\frac{e^{\beta}}{1-e^{\beta}}$.
  More precisely, we define $\Psi_{w,\uu}=\Psi^1_{w,\uu} \cup \Psi^2_{w,\uu}$ with
  \begin{align*}
    \Psi^1_{w,\uu}&=\{\hphantom{-}\overline{w(\alpha)}, \ \alpha \in \Delta_\uu, \ \la\overline{w(\alpha)},Y\ra >0\}, \\
    \Psi^2_{w,\uu}&=\{-\overline{w(\alpha)}, \ \alpha \in \Delta_\uu,  \ \la\overline{w(\alpha)},Y\ra <0\}.
  \end{align*}
  The elements in $\Psi_{w,\uu}$ are positive on $Y$, so $\Psi_{w,\uu} \subseteq \Psi$.
  (However, note that $\Psi_{w,\uu}$ depends on $w$ and may not contain $\Delta_\k^+$.)
  We also define $s^\Sigma_w = \lvert\Psi^2_{w,\uu}\rvert$ and $e^{g^\Sigma_w} = \prod_{\psi\in\Psi^2_{w,\uu}} e^{-\psi}$.
  We can thus rewrite Eq.~\eqref{eq:restricted character} as
  \begin{align*}
    {\chi_\lambda}_{|T_K}
  &= \sum_{[w] \in \CW_\g/\CW_\lp} \frac{e^{\overline{w(\lambda)}}} {\prod_{\psi\in\Psi^1_{w,\uu}} (1-e^{-\psi})} \frac{(-1)^{s^\Sigma_w} e^{g^\Sigma_w}}{\prod_{\psi  \in \Psi^2_{w,\uu}} (1-e^{-\psi})} \\
  &= \sum_{[w] \in \CW_\g/\CW_\lp} \frac{e^{\overline{w(\lambda)}} (-1)^{s^\Sigma_w} e^{g^\Sigma_w}}{\prod_{\psi \in  \Psi_{w,\uu}}(1-e^{-\psi})}.
  \end{align*}
  We obtain the following expression for the restricted character:
  \[ {\chi_\lambda}_{|T_K} = \sum_{[w] \in \CW_\g/\CW_\lp} (-1)^{s^\Sigma_w} \sum_{\nu\in\Lambda_K} \CP_{\Psi_{w,\uu}}(\overline{w(\lambda)}+g_w^\Sigma-\nu) \, e^\nu, \]
  where $\CP_{\Psi_{w,\uu}}$ is the partition function determined by the restricted roots~$\Psi_{w,\uu}$ (cf.\ Eq.~\eqref{eq:geometric}).
  As a consequence:
  \begin{equation}\label{mult}
    m_{G,T_K}(\lambda,\nu)=\sum_{[w] \in \CW_\g/\CW_\lp} (-1)^{s_w^\Sigma} \CP_{\Psi_{w,\uu}}(\overline{w(\lambda)}+g_w^\Sigma-\nu)
  \end{equation}
  When $K$ is the maximal torus $T_G$ of $G$ and $\lambda$ regular, Formula~\eqref{mult} is the Kostant multiplicity formula for a weight~\cite{Kos}.
  The formula above is obtained by the same method (using that the linear forms in the denominator are polarized).

  Let us now use Lemma~\ref{lem:KandTmult}, $$m_{G,K}(\lambda,\mu) = \sum_{\tilde w \in \CW_\k} \epsilon(\tilde w) m_{G,T_K}(\lambda, \mu+\rho_\k-\tilde w(\rho_\k)).$$
  Inserting Eq.~\eqref{mult}, we obtain for $(\lambda,\mu)\in \Lambda_{G,K,\geq 0}$ that
  \[ m_{G,K}(\lambda, \mu) = \sum_{\tilde w \in \CW_\k} \epsilon(\tilde w) \!\!\!\! \sum_{[w] \in \CW_\g/\CW_\lp} \!\!\!\! (-1)^{s^\Sigma_w} \CP_{\Psi_{w,\uu}}\bigl(\overline{w(\lambda)} + g^\Sigma_w - (\mu+\rho_k-\tilde{w}(\rho_\k))\bigr). \]
  Observe that, if $\lambda$ is regular, then we may rewrite this expression as a sum of partitions functions for $\Psi\backslash \Delta_\k^+$, obtaining the Heckman formula~\cite{Heck} -- but we will not use this fact. %\cite{Knapp}.
  The point $(\lambda,\mu)$ being in $\tau_\Sigma$, it follows that $\overline{w(\lambda)}-\mu$ is in the $\Psi$-tope $\a_w^\tau$.
  Since $\Psi_{w,\uu}\subseteq\Psi$, $\a_w^\tau$ in turn is contained in a unique $\Psi_{w,\uu}$-tope, which we denote by the same symbol.
  We can assume that $(\lambda,\mu)$ is sufficiently far from all walls, so that all the translates $\overline{w(\lambda)} + g^\Sigma_w - (\mu+\rho_k-\tilde{w}(\rho_\k))$ are also in $\a_w^\tau$.
  Now use Theorem~\ref{thm:multT} to express the values of the partition function by iterated residues:
  \begin{align*}
    &m_{G,K}(\lambda,\mu) = \sum_{\tilde w \in \CW_\k} \epsilon(\tilde w) \!\!\!\! \sum_{[w] \in \CW_\g/\CW_\lp} \!\!\!\! (-1)^{s^\Sigma_w} \\
    &\quad\sum_{\gamma\in \Gamma_K/q\Gamma_K} \sum_{{\overrightarrow{\sigma}} \in \mathcal{OS}(\Psi_{w,\uu},\a^\tau_w)} \!\!\!\!
    \Res_{\overrightarrow\sigma} \frac{e^{\la \overline{w(\lambda)}+g_w^\Sigma-(\mu+\rho_\k-\tilde{w}(\rho_\k)), z+\frac{2i\pi}{q} \gamma\ra}} {\prod_{\psi \in \Psi_{w,\uu}} (1-e^{-\la \psi,z+\frac{2i\pi}{q} \gamma\ra})}.
  \end{align*}
  As $\Psi_{w,\uu} \subseteq \Psi$, the index $q(\Psi)$ is a multiple of $q(\Psi_{w,\uu})$ for all $w$.
  We may thus use $q=q(\Psi)$ for computing all terms above (see Remark~\ref{rem:szenes vergne quasipol}).

  Reverting the polarization process, we may rewrite
  $$(-1)^{s_w^\Sigma}\frac{e^{\la \overline{w(\lambda)}+g_w^\Sigma,z+\frac{2i\pi}{q} \gamma\ra}} {\prod_{\psi \in \Psi_{w,\uu}} (1-e^{-\la \psi,z+\frac{2i\pi}{q} \gamma\ra})}=\frac{e^{\la \overline{w(\lambda)}, z+\frac{2i\pi}{q} \gamma\ra}} {\prod_{\alpha \in \Delta_\uu} (1-e^{-\la \overline{w\alpha},z+\frac{2i\pi}{q} \gamma\ra})}.$$
  So, remembering that $\prod_{\beta\in \Delta_\k^+}(1-e^{-\beta})=\sum_{\tilde{w}} \epsilon(\tilde w) e^{-\rho_\k+\tilde{w}(\rho_\k)}$, we obtain
  \begin{align*}
    % m_{G,K}(\lambda,\mu)
    % = \sum_{[w] \in \CW_\g/\CW_\lp} \sum_{\gamma\in \Gamma_K/q\Gamma_K} \sum_{{\overrightarrow{\sigma}} \in \mathcal{OS}(\Psi_{w,\uu},\a^\tau_w)} \!\!\!\!
    % \Res_{\overrightarrow\sigma} \left( \sum_{\tilde w \in \CW_\k} \epsilon(\tilde w) e^{\la \overline{w(\lambda)}-(\rho_\k-\tilde{w}(\rho_\k)), z+\frac{2i\pi}{q} \gamma\ra} \right) \frac{e^{\la \overline{w(\lambda)}-\mu, z+\frac{2i\pi}{q} \gamma\ra}} {\prod_{\alpha \in \Delta_\uu} (1-e^{-\la \overline{w(\alpha)},z+\frac{2i\pi}{q} \gamma\ra})} \\
    % m_{G,K}(\lambda,\mu)
    % = \sum_{[w] \in \CW_\g/\CW_\lp} \sum_{\gamma\in \Gamma_K/q\Gamma_K} \sum_{{\overrightarrow{\sigma}} \in \mathcal{OS}(\Psi_{w,\uu},\a^\tau_w)} \!\!\!\!
    % \Res_{\overrightarrow\sigma} \left( \prod_{\beta\in\Delta_\k^+} (1 - e^{-\la\beta,z\ra}) \right) \frac{e^{\la \overline{w(\lambda)}-\mu, z+\frac{2i\pi}{q} \gamma\ra}} {\prod_{\alpha \in \Delta_\uu} (1-e^{-\la \overline{w(\alpha)},z+\frac{2i\pi}{q} \gamma\ra})} \\
    m_{G,K}(\lambda,\mu) = \!\!\!\! \sum_{[w] \in \CW_\g/ \CW_\lp} \sum_{\gamma\in \Gamma_K/q\Gamma_K} \sum_{\overrightarrow\sigma \in \mathcal{OS}(\Psi_{w,\uu},\a^\tau_w) }
    \!\!\!\! \!\!\!\!\Res_{\overrightarrow\sigma} S^{\Sigma,w}_{\lambda,\mu}(z+\frac{2i\pi\gamma}{q}).
  \end{align*}
  Lastly, we may replace the sum over the bases in $\mathcal{OS}(\Psi_{w,\uu},\a^\tau_w)$ by a sum over the bases in $\mathcal{OS}(\Psi,\a^\tau_w)$, since $\Psi_{w,\uu}\subseteq\Psi$ (see Lemma~\ref{lem:OS bases} below).
  Thus we have shown that
  $$m_{G,K}(\lambda, \mu)=p_\tau(\lambda,\mu)$$
  when $(\lambda,\mu)\in \tau_\Sigma\cap\Lambda_{G,K,\geq 0}^\Sigma$ is sufficiently far away from any of the walls in $\CF_\Sigma$.
  Both the left and the right-hand side are quasi-polynomials (Theorem~\ref{theo:mGK} and Remark~\ref{rem:branchfunct quasipol}).
  Thus the theorem now follows from Lemma~\ref{lem:equaquasipoly} below, which asserts that any quasi-polynomial function is uniquely determined by its values on a sufficiently large subset.
\end{proof}

\begin{lemma}\label{lem:OS bases}
  Let $\Psi_0\subseteq\Psi$ be a sublist, $\xi$ a $\Psi$-regular element, and $\a_0\supseteq\a$ the topes determined by $\xi$.
  Let $f(z)=g(z)/h(z)$, with $g(z)$ holomorphic near $z=0$ and $h(z)=\prod_{\psi\in\Psi_0} \la \psi, z \ra$.
  Then:
  \[ \sum_{\overrightarrow\sigma\in\mathcal{OS}(\Psi,\a)} \Res_{\overrightarrow\sigma}(f)
  = \sum_{\overrightarrow\sigma\in\mathcal{OS}(\Psi_0,\a_0)} \Res_{\overrightarrow\sigma}(f) \]
  In particular, if $\Psi_0$ does not generate $E$ then both sides are zero.
\end{lemma}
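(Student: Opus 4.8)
The plan is to reduce the claimed identity to the well-known fact that, for a function $f$ with only the hyperplanes $\{\langle\psi,z\rangle = 0 : \psi\in\Psi_0\}$ as poles, the iterated residue $\Res_{\overrightarrow\sigma}(f)$ depends only on whether $\overrightarrow\sigma$ is a basis extracted from $\Psi_0$ (and contributes zero otherwise), and that the sum of these residues over the Orlik--Solomon bases adapted to a tope computes the partition function / Szenes--Vergne quasipolynomial, which is intrinsic to $\Psi_0$. Concretely, I would first observe that if $\overrightarrow\sigma\in\mathcal{OS}(\Psi,\a)$ contains an element $\psi_j\notin\Psi_0$, then since $h(z)=\prod_{\psi\in\Psi_0}\langle\psi,z\rangle$ has no pole along $\langle\psi_j,z\rangle=0$, the innermost relevant residue in the iterated residue vanishes: indeed, by the Orlik--Solomon condition one can check that such a $\psi_j$ appears as the variable of a residue applied to a function holomorphic in that variable, so $\Res_{\overrightarrow\sigma}(f)=0$. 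Hence the left-hand sum may be restricted to those $\overrightarrow\sigma$ all of whose elements lie in $\Psi_0$.

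Next I would compare the combinatorics of the two index sets. The issue is that an ordered basis drawn entirely from $\Psi_0$ may be Orlik--Solomon with respect to $\Psi$ but \emph{not} with respect to $\Psi_0$ (or vice versa), because the OS condition quantifies over \emph{all} earlier elements $\psi_j$, including those in $\Psi\setminus\Psi_0$. So the two sides are not term-by-term equal; rather, both sides compute the same object. The cleanest route is to invoke Theorem~\ref{thm:multT} twice. The right-hand side, $\sum_{\overrightarrow\sigma\in\mathcal{OS}(\Psi_0,\a_0)}\Res_{\overrightarrow\sigma}(f)$, is (up to the exponential/Laplace-transform bookkeeping) the value at the appropriate lattice point of the Szenes--Vergne quasipolynomial $p_{\a_0}^{\Psi_0}$, which agrees with $\CP_{\Psi_0}$ on $\overline{\a_0}$. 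For the left-hand side one argues that, after discarding the vanishing terms, $\sum_{\overrightarrow\sigma\in\mathcal{OS}(\Psi,\a)}\Res_{\overrightarrow\sigma}(f)$ likewise computes $\CP_{\Psi_0}$ on $\overline\a\subseteq\overline{\a_0}$: this is precisely the content of the iterated-residue formula applied to $\Psi$ but with a test function whose poles only see $\Psi_0$, so the ``extra'' generators of $\Psi$ are inert and the formula collapses to the one for $\Psi_0$. Since $\overline\a\subseteq\overline{\a_0}$, both quasipolynomials agree there, giving the identity. The final sentence is immediate: if $\Psi_0$ does not span $E$, then $h$ has poles on fewer than $r=\dim E$ independent hyperplanes, every iterated residue of $f$ vanishes, and $\mathcal{OS}(\Psi_0,\a_0)$ is empty, so both sides are $0$.

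The main obstacle I anticipate is the middle step: making rigorous that the left-hand sum, after removing the manifestly zero terms, really equals $\CP_{\Psi_0}$ rather than something that merely agrees with it asymptotically. The subtlety is that $\mathcal{OS}(\Psi,\a)$ restricted to bases inside $\Psi_0$ need not coincide with $\mathcal{OS}(\Psi_0,\a_0)$ as sets, so one cannot simply match terms; one must argue at the level of the generating identity~\eqref{eq:geometric}. I would handle this by viewing $\Res_{\overrightarrow\sigma}S^\Psi_{T_K}(\mu,z)$ as picking out a partition function supported on $\Cone(\sigma)$ and invoking the Jeffrey--Kirwan / Szenes--Vergne decomposition: the sum over $\mathcal{OS}(\Psi,\a)$ reconstructs $\CP_\Psi$ on $\a$, and specializing the numerator to $1/h$ (equivalently, dropping all the $\psi\in\Psi\setminus\Psi_0$ from the product) is exactly the degeneration that turns $\CP_\Psi$ into $\CP_{\Psi_0}$. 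Carefully, this is the statement that the iterated residue is a \emph{linear functional in $f$} that annihilates any $f$ whose polar hyperplanes are spanned by a proper subset of $\sigma$ together with the fact that for $f=g/h$ with $\Supp(\text{poles})=\{\psi^\perp:\psi\in\Psi_0\}$ only the sub-bases of $\Psi_0$ survive — so the whole computation is governed by $\Psi_0$ alone, and the OS-basis bookkeeping for $\Psi$ versus $\Psi_0$ washes out because in both cases one is summing over a \emph{complete} set of JK-residues adapted to the same chamber containing $\a$.
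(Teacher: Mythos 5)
Your opening step --- that every ordered basis $\overrightarrow\sigma\in\mathcal{OS}(\Psi,\a)$ containing some $\psi_j\notin\Psi_0$ contributes $\Res_{\overrightarrow\sigma}(f)=0$ --- is false, and this is the crux of the matter, not a side remark. The reason you give (``such a $\psi_j$ appears as the variable of a residue applied to a function holomorphic in that variable'') fails because taking a residue along an earlier coordinate can \emph{create} a pole along $\langle\psi_j,z\rangle=0$ even when $\psi_j\notin\Psi_0$: a factor $\langle\psi,z\rangle$ with $\psi\in\Psi_0$ may, after restriction to the hyperplane of the inner residue, become proportional to $\langle\psi_j,z\rangle$. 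Concretely, take $E=\R^2$, $\Psi=[\psi_1,\psi_2,\psi_3]=[e_1,e_2,e_1+e_2]$, $\Psi_0=[\psi_2,\psi_3]$, $g\equiv1$, so $h(z)=\langle e_2,z\rangle\,\langle e_1+e_2,z\rangle$. Take $\xi=(1,2)$; then $\a=\a_0=\{0<x<y\}$, $\mathcal{OS}(\Psi,\a)=\{[\psi_1,\psi_2]\}$, and $\mathcal{OS}(\Psi_0,\a_0)=\{[\psi_2,\psi_3]\}$. In coordinates $z_1=w_1$, $z_2=w_2$ for $\overrightarrow\sigma=[\psi_1,\psi_2]$, one has $f=1/(z_2(z_1+z_2))$, so $\Res_{z_2=0}f=1/z_1$ and then $\Res_{z_1=0}(1/z_1)=1$; thus $\Res_{[\psi_1,\psi_2]}(f)=1\neq 0$ even though $\psi_1=e_1\notin\Psi_0$. (The right-hand side $\Res_{[\psi_2,\psi_3]}(f)$ also equals $1$, so the lemma holds --- but your filtering step would have predicted LHS $=0$.) In other examples the surviving terms cancel in a nontrivial way rather than vanish individually, so there is no way to rescue a term-by-term argument.

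The secondary issue is the appeal to Theorem~\ref{thm:multT}: that theorem is a statement about the particular exponential generating function $S^\Psi_{T_K}(\mu,\cdot)$ (and the full $\gamma$-sum of Definition~\ref{def:szenes vergne quasipol}), whereas Lemma~\ref{lem:OS bases} concerns a single sum of iterated residues of an \emph{arbitrary} $f=g/h$ with $g$ holomorphic. The right tool is the one you gesture at in your final paragraph: after reducing (via the Taylor expansion of $g$) to $f$ homogeneous of degree $-\dim E$, \emph{both} sums compute the Jeffrey--Kirwan residue of $f$ on the chamber, which by construction only depends on $f$, on the set of its polar hyperplanes (all of which lie among the $\Psi_0$-hyperplanes), and on the tope --- this is \cite[Proposition 8.11]{JeffreyKirwan1993}. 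The JK residue is the value on the tope of the unique tempered generalized function supported on $\Cone(\Psi_0)$ whose Fourier transform equals $f$ away from the poles; that function is constant on each $\Psi_0$-tope, in particular on $\a_0$, so its values on $\a\subseteq\a_0$ and on $\a_0$ agree. That is the paper's one-line argument, and it handles the ``bookkeeping washout'' you worried about cleanly and without any per-basis vanishing claim. Your treatment of the non-spanning case is essentially fine (if $\Psi_0$ spans a proper subspace, the iterated residue must eventually hit a holomorphic function and dies, and $\mathcal{OS}(\Psi_0,\a_0)=\emptyset$), though the paper prefers to phrase it via the reduction to degree $-\dim E$.
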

\begin{proof}
  Taking the Taylor series of $g(z)$ at $z=0$, we may assume that $g$ is polynomial.
  If $\Psi_0$ does not generate $E$ then both residues are zero, since they depend only on the homogeneous component of $g/h$ of degree $-\!\dim E$.
  Thus we may assume that $f$ is homogeneous of degree $-\!\dim E$.

  If $\Psi_0$ generates $E$ then both sides compute the same object:
  the Jeffrey-Kirwan residue of $f$ on $\a$ (see~\cite[Proposition 8.11]{JeffreyKirwan1993}).
  Let us recall its definition.
  Consider the unique generalized function $\theta$ on $E$, supported on the pointed cone $\Cone(\Psi_0)\subseteq\Cone(\Psi)$, such that the Fourier transform $\int_E e^{i \la\xi,z\ra} \theta(\xi) d\xi$ coincides with $f$ on the open subset $\{ z\in E^* : \prod_{\psi\in\Psi_0} \la\psi,z\ra\neq0 \}$ of $E^*$.
  The function $\theta(\xi)$ is constant on the $\Psi_0$-tope $\a_0$.
  Its value on $\a_0$ is by definition the Jeffrey-Kirwan residue of $f$ on $\a_0$.
  Now the left-hand side computes the value of $\theta$ at an arbitrary point $\xi\in\a$, while the right-hand side computes its value at an arbitrary point $\xi\in\a_0$.
  Since $\a\subseteq\a_0$, both sides coincide.
\end{proof}

\begin{lemma}\label{lem:equaquasipoly}
  Let $p_1,p_2$ be two quasi-polynomial functions on a lattice $L$.
  If there exists a  open cone $\tau$ and $s\in L$ such that $p_1,p_2$ agree on a translate $(s+\tau)\cap L$ of $\tau$, then $p_1=p_2$.
\end{lemma}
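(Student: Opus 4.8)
The plan is to reduce the statement to the one-dimensional (one-variable polynomial) case by choosing a generic direction. First I would fix a lattice vector $v$ in the interior of the cone $\tau$; for $k$ large enough, $s + kv \in (s+\tau)\cap L$, so $p_1(s+kv) = p_2(s+kv)$ for all sufficiently large integers $k$. More generally, I would want this for an open set of directions: since $\tau$ is open, there is a sublattice's worth of vectors $v_1,\dots,v_r$ spanning $L\otimes\Q$ with each $v_j$ (and in fact all sufficiently-bounded integer combinations of them with nonnegative coefficients) lying in $\tau$. Set $q$ to be a common period of $p_1$ and $p_2$, so that on each coset of $qL$ both $p_i$ restrict to genuine polynomial functions. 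It then suffices to show: if two polynomial functions $P_1, P_2$ on $L$ (equivalently on $\R^{\dim E}$ after extending) agree on all points of the form $s' + q(n_1 v_1 + \dots + n_r v_r)$ with $n_j$ nonnegative integers, $n_j$ large, then $P_1 = P_2$.

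The key step is the claim that a polynomial in $r$ variables vanishing on a ``shifted orthant'' $\{(n_1,\dots,n_r) : n_j \ge N_0\}$ of the integer lattice must be identically zero. This is standard: a one-variable polynomial with infinitely many roots is zero, and one induces on the number of variables by freezing all but one coordinate at arbitrary large-integer values, concluding each coefficient polynomial (in the remaining variables) vanishes on a shifted orthant of one lower dimension. Applying this with $P_i(n_1,\dots,n_r) := p_i\bigl(s' + q\sum_j n_j v_j\bigr)$ — which is a polynomial in the $n_j$ because $p_i$ is polynomial on the coset $s' + qL$, and the $v_j$ span — gives $P_1 = P_2$ as polynomials in the $n_j$. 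Since the $v_j$ span $L\otimes\Q$, the affine map $(n_j)\mapsto s' + q\sum n_j v_j$ has Zariski-dense image in $L\otimes\R$, so $p_1 = p_2$ on the whole coset $s'+qL$. Letting $s'$ range over a set of coset representatives of $L/qL$ contained in $s+\tau$ (again possible because $\tau$ is open and contains lattice points in every residue class once we translate appropriately — or simply observe that each coset meets $s + \tau$) yields $p_1 = p_2$ on all of $L$.

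The main obstacle, such as it is, is purely bookkeeping: arranging that for \emph{every} coset of $qL$ one can find a base point $s'$ in that coset together with enough spanning directions $v_j$ so that the whole shifted orthant $s' + q\sum n_j v_j$ ($n_j$ large) lands inside the given translate $s+\tau$ of the open cone. Since $\tau$ is open and conic, its interior contains a full-dimensional rational simplicial subcone; taking the $v_j$ to be (lattice multiples of) the edge generators of such a subcone, and $s'$ any lattice point of $s + \tau$ congruent to the desired residue mod $q$ — which exists because $s + \tau$, being open, contains a translate of the full fundamental domain of $qL$ — makes the containment automatic for large $n_j$. With that in place the argument is complete; no delicate estimate is needed.
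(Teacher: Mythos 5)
The paper does not give a proof of this lemma (it explicitly leaves it to the reader), so there is no paper argument to compare against; I will simply assess your proof on its own terms.

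Your proof is correct. The reduction chain is the natural one: a common period $q$ for $p_1,p_2$ reduces the problem to showing the two polynomials agree coset by coset; on the coset $s'+qL$ you exhibit a shifted orthant $s' + q(\N_{\geq N_0} v_1 + \cdots + \N_{\geq N_0} v_r)$ inside $(s+\tau)\cap(s'+qL)$ by picking $v_1,\dots,v_r\in\tau\cap L$ that span and $s'$ in the right residue class (both of which exist because $\tau$ is open, conic, and non-empty, so $s+\tau$ contains lattice points in every residue class mod $q$ and $\tau$ contains lattice points spanning $E$); and you invoke the standard fact, proved by induction on the number of variables, that a polynomial vanishing on a shifted orthant of $\Z^r$ vanishes identically. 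Since the affine parametrization $(n_j)\mapsto s'+q\sum n_j v_j$ is surjective onto $E$, equality of the pulled-back polynomials forces equality of the original polynomial extensions of $p_1|_{s'+qL}$ and $p_2|_{s'+qL}$, and ranging over all cosets gives $p_1=p_2$. One tiny point of bookkeeping worth tightening in a final write-up: when you say ``all sufficiently-bounded integer combinations \dots lying in $\tau$'', in fact \emph{every} nontrivial nonnegative combination of the $v_j$ lies in $\tau$ because $\tau$ is a convex cone (being, in the paper's usage, a connected component of a hyperplane-arrangement complement), so no lower bound $N_0$ is actually needed for the containment itself; but this does not affect the correctness of the argument.
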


The proof of Lemma~\ref{lem:equaquasipoly} is left to the reader.

\begin{remark}\label{rem:opti}
  While in Definition~\ref{def:branchfunct} we sum over all OS bases of $\Psi$, the proof of Theorem~\ref{theo:branch} shows that, for each $[w]\in\CW_\g/\CW_\lp$, it suffices to sum only over the OS bases of $\Psi_{w,\uu}$.

  In fact, for any given $[w]\in\CW_\g/\CW_\lp$ and $\gamma\in\Gamma_K/q\Gamma_K$, we may use the OS bases of the system
  \[ \Psi_{w,\gamma,\uu} = \{ \psi \in \Psi_{w,\uu} \setminus \Delta^+_\k, e^{\frac{2i\pi}q\la\psi,\gamma\ra}=1 \} \]
  (when $L$ and $L'$ are lists, we write $L \setminus L'$ for the difference of lists, i.e., we remove elements according to their multiplicity).
  That is, we may use the formula
  \[ p_\tau^\Sigma(\lambda,\mu)=\sum_{[w] \in \CW_\g/ \CW_\lp} \sum_{\gamma\in \Gamma_K/q\Gamma_K} \sum_{\overrightarrow\sigma \in \mathcal{OS}(\Psi_{w,\gamma,\uu},\a^\tau_w) } \Res_{\overrightarrow\sigma} S^{\Sigma,w}_{\lambda,\mu}(z+\frac{2i\pi\gamma}{q}). \]
  This follows from Lemma~\ref{lem:OS bases}, since the only poles of the function $z \mapsto S^{\Sigma,w}_{\lambda,\mu}(z+2i\pi/q)$ near $z=0$ come from the linear forms in $\Psi_{w,\gamma,\uu} \setminus \Delta^+_\k$. % that appear in its denominator.
  Likewise, we may choose $q$ as the index of $\Psi\setminus\Delta_\k^+$ (or even of $\Psi_{w,\uu}\setminus\Delta_\k^+$).
  % Likewise, it suffices to choose $q$ as the index of $\Psi_{w,\uu}\setminus\Delta^+_\k$, a subset of $\Psi\setminus\Delta^+_\k$.
  We use these optimizations in our algorithm for computing Kronecker coefficients (see Appendix~\ref{app:thealgoKro}).
\end{remark}

\begin{remark}
  An obvious necessary criterion for $\mathcal{OS}(\Psi_{w,\gamma,\uu}, \a^\tau_w)$ to be non-empty is that $\overline{w(\lambda)}-\mu$ is contained in the cone generated by the $\Psi$. %_{w,\gamma,\uu}$.
  The set of such $w$ is called the set of valid permutations for $(\lambda,\mu)$, defined by Cochet in~\cite{C2}.
%, and his algorithm construct them recursively.
  See also the notion of Weyl alternative sets in~\cite{PH}.
%The same point of view, that is  reducing a priori the number of summands,  is given by  Pamela Harris, \cite{PH}, when $\Psi$ that is the system f positive roots of $A_r$, $\lambda$ the highest root and $\mu=0.$
\end{remark}

\subsubsection{The general case}\label{notregular}

We now explain how to deal with the general case.
As before, we start with the character formula (Lemma~\ref{lem:restricted weyl}):
$$\chi_{{\lambda}|_{T_G}}=\sum_{[w] \in \CW_\g/\CW_{\lp}}\frac{e^{w(\lambda)}}{\prod_{\alpha \in \Delta_{\uu}}(1-e^{-w(\alpha)})}.$$
But now we can not longer directly restrict the formula to $T_K$, since the denominator could vanish identically on $T_K$.
So we instead compute a limit formula as follows.

Choose $Y_1\in i\t_\g$ such that $\langle w(\alpha),Y_1\rangle \neq 0$ for all $\alpha \in \Delta_\uu$ and $[w]\in\CW_\g/\CW_{\lp}$ for which the restriction $\overline{w(\alpha)}$ of $w(\alpha)$ to $i\t_\k$ is zero.
Let $z\in (\t_\k)_\C$ and $[w]\in \CW_\g/\CW_{\lp}.$
For $\epsilon$ small, consider the expression
$e^{\langle \overline{w(\lambda)}, z\rangle}e^{\langle w(\lambda), \epsilon Y_1\rangle} / \prod_{\alpha \in \Delta_{\uu}}(1-e^{-\langle w(\alpha),z+\epsilon Y_1\rangle})$
and define
\[ \Theta_\epsilon(z,Y_1)=\frac{e^{\langle w(\lambda), \epsilon Y_1\rangle}} {\prod_{\alpha \in \Delta_{\uu}}(1-e^{-\langle w(\alpha),z+\epsilon Y_1\rangle})}. \]
The function $\epsilon\mapsto\Theta_\epsilon(z,Y_1)$ has a pole at $\epsilon=0$ of order $p=\lvert\{\alpha \in \Delta_\uu, \langle w(\alpha),z\rangle=0\}\rvert.$
Given a Laurent series $\sum_{i\geq-p} c_i\epsilon^i$ at $\epsilon=0$, we say that $c_0$ is its constant term.
Thus define $F_w(z,Y_1)$ to be the constant term of the Laurent expansion of $\epsilon\mapsto\Theta_\epsilon(z,Y_1)$ at $\epsilon=0$, and let
$$G_w(z,Y_1)=e^{\langle \overline{w(\lambda)}, z\rangle}F_w(z,Y_1).$$

When the order $p$ of the pole is $0$, as in Section~\ref{regular}, then
$$G_w(z,Y_1) = \frac{e^{\la{\overline{w(\lambda)}},z\ra}}{\prod_{\alpha \in \Delta_{\uu}}(1-e^{-{\la \overline{w(\alpha)}},z\ra})},$$
that is, $G_w(z,Y_1)$ is a summand in Eq.~\eqref{eq:restricted character}.
Thus, in this case, $\sum_{w \in \CW_\g/\CW_\lp} G_w(z,Y_1)$ does not depend on the choice of $Y_1$ and is equal to the restricted character $\chi_{\lambda}(\exp(z)).$

In general, $G_w(z,Y_1)$ is of the form $P/Q$ where $P$ is a sum of exponentials and $Q$ is a product of the form $\prod_{\psi \in \Psi} ( 1-e^ {-\psi})^{n_\psi}$, where $\Psi$ is as before the list of non-zero restricted roots.
Thus the restricted character ${\chi_{\lambda}}_{|T_K}$ can again be expressed as a sum of partition functions associated to lists of elements belonging to $\Psi$ (with possibly higher multiplicities).
More precisely, define
\begin{equation}\label{eq:new S}
\begin{aligned}
S_{\lambda,\mu}^{\Sigma,w}(z) &= \left( \prod_{\beta \in \Delta_{\k}}(1-e^{-\langle \beta, z\rangle}) \right) e^{\langle \overline{w(\lambda)}-\mu, z\ra} F_w(z,Y_1) \\
&= \left( \prod_{\beta \in \Delta_{\k}}(1-e^{-\langle \beta, z\rangle}) \right) e^{-\la\mu, z\ra} G_w(z,Y_1).
\end{aligned}
\end{equation}
In the case when the restriction to $i\t_\k$ of all $\alpha\in \Delta_\g$ is non-zero, the function $S_{\lambda,\mu}^{\Sigma, w}(z)$ is indeed equal to the function defined in Eq.~(\ref{eqSSigma}).
In the general case, the function $S_{\lambda,\mu}^{\Sigma,w}(z)$ depends on our choice of $Y_1$.
We leave this choice implicit.

By arguing as in the proof of Theorem~\ref{theo:branch}, we obtain the following result:

\begin{theorem}\label{theo:branchingsingular}
  Assume that $C^\Sigma_{G,K}$ is solid.
  Let $\tau_\Sigma \subset i\t^*_{\g,\k,\Sigma}$ be a tope for $\CF_\Sigma$, and $\tau$ the tope for $\CF$ that contains $\tau_\Sigma$.
  Let $(\lambda,\mu)\in \overline \tau_\Sigma\cap \Lambda^\Sigma_{G,K,\geq 0}$.
  Then:
  \begin{enumerate}
  \item[(i)] If $(\lambda,\mu)\notin C^\Sigma_{G,K}$, then
    $$m_{G,K}(\lambda, \mu)=p^\Sigma_\tau(\lambda,\mu)=0.$$
  \item[(ii)] If $(\lambda,\mu)\in C^\Sigma_{G,K}$ \emph{and} the tope $\tau_\Sigma$ intersects $C^\Sigma_{G,K}$ (hence, $\overline{\tau_\Sigma}\cap i\t^*_{\g,\k,\Sigma,\geq0} \subseteq C^\Sigma_{G,K}$), then
    $$m_{G,K}(\lambda, \mu)=p^\Sigma_\tau(\lambda,\mu).$$
  \end{enumerate}
  Here, $p_\tau^\Sigma(\lambda,\mu)$ is defined as in Definition~\ref{def:branchfunct} but using the more general definition of $S_{\lambda,\mu}^{\Sigma,w}$ in Eq.~\eqref{eq:new S} instead of Eq.~\eqref{eqSSigma}.
\end{theorem}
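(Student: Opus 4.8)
The plan is to follow the proof of Theorem~\ref{theo:branch} essentially verbatim, replacing each step in which a character is restricted directly to $T_K$ by the regularized ``constant term in $\epsilon$'' construction with the auxiliary generic vector $Y_1$. Fix $Y_1\in i\t_\g$ as in the construction preceding the theorem, so that $\langle w(\alpha),Y_1\rangle\neq0$ whenever $\overline{w(\alpha)}=0$. Starting from Lemma~\ref{lem:restricted weyl}, replace each summand in $\chi_{\lambda|T_G}$ by the deformation $e^{\langle\overline{w(\lambda)},z\rangle}\,\Theta_\epsilon(z,Y_1)$ and extract the constant term, obtaining $G_w(z,Y_1)$. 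The first task is to verify that $\sum_{[w]\in\CW_\g/\CW_\lp}G_w(z,Y_1)$ is a meromorphic function of $z$ whose only poles near $z=0$ come from the linear forms in $\Psi$ (with possibly higher multiplicities) and whose Fourier--Laplace inverse computes $m_{G,T_K}(\lambda,\nu)$; this is checked, as in the derivation of Eq.~\eqref{mult}, by polarizing the surviving restricted linear forms with respect to $Y$ and expanding the resulting geometric series, so that each $G_w$ becomes a generating function for a (higher-multiplicity) Kostant partition function $\CP_{\Psi_{w,\uu}}$ after the constant term in $\epsilon$ is taken.

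Next, apply Lemma~\ref{lem:KandTmult} to pass from $m_{G,T_K}$ to the branching multiplicity $m_{G,K}(\lambda,\mu)$: multiplying by $\prod_{\beta\in\Delta_\k^+}(1-e^{-\beta})$ and antisymmetrizing over $\CW_\k$ is exactly the operation that produces the function $S_{\lambda,\mu}^{\Sigma,w}(z)$ of Eq.~\eqref{eq:new S}, which in the case of non-vanishing restricted roots reduces to Eq.~\eqref{eqSSigma}. For $(\lambda,\mu)\in\tau_\Sigma\cap\Lambda^\Sigma_{G,K,\geq0}$ sufficiently far from every wall of $\CF_\Sigma$, all the translated arguments $\overline{w(\lambda)}+g_w^\Sigma-(\mu+\rho_\k-\tilde w(\rho_\k))$ that occur remain inside the relevant $\Psi$-topes $\a_w^\tau$; Theorem~\ref{thm:multT} then lets one rewrite every value of these partition functions as a sum of iterated residues over $\mathcal{OS}(\Psi_{w,\uu},\a_w^\tau)$, using the common multiple $q=q(\Psi)$ (equivalently, the index of $\Psi\setminus\Delta_\k^+$) for all $[w]$, since higher multiplicities add no new bases. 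Reverting the polarization turns each summand back into $S_{\lambda,\mu}^{\Sigma,w}(z+\tfrac{2i\pi\gamma}{q})$, and Lemma~\ref{lem:OS bases} — which holds just as well when the denominator $h(z)$ is a product of \emph{powers} of linear forms of $\Psi$, since its proof is via the Jeffrey--Kirwan residue and is insensitive to multiplicities — allows the sum over OS bases of the polarized sublists to be replaced by a sum over $\mathcal{OS}(\Psi,\a_w^\tau)$. This yields $m_{G,K}(\lambda,\mu)=p_\tau^\Sigma(\lambda,\mu)$ with $p_\tau^\Sigma$ as in Definition~\ref{def:branchfunct} but built from the general $S_{\lambda,\mu}^{\Sigma,w}$, valid far from the walls. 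Since both sides are quasi-polynomial on $\overline{\tau_\Sigma}\cap\Lambda^\Sigma_{G,K,\geq0}$ (Theorem~\ref{theo:mGK} and the analogue of Remark~\ref{rem:branchfunct quasipol}), Lemma~\ref{lem:equaquasipoly} upgrades the identity to the full closure, giving (ii); part (i) then follows because $m_{G,K}$ vanishes off $C^\Sigma_{G,K}$ while the same tope argument shows $p_\tau^\Sigma$ is identically zero on a tope $\tau_\Sigma$ disjoint from $C^\Sigma_{G,K}$.

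The step I expect to be the main obstacle is the $\epsilon$-regularization bookkeeping: one must prove that the constant term $F_w(z,Y_1)$ of $\Theta_\epsilon(z,Y_1)$ is genuinely of the form $P/Q$ with $Q=\prod_{\psi\in\Psi}(1-e^{-\psi})^{n_\psi}$ and $P$ a finite sum of exponentials, so that the geometric-series expansion and hence the partition-function interpretation are legitimate, and one must check that taking the constant term in $\epsilon$ commutes with the subsequent finite linear operations — multiplication by $\prod_{\beta\in\Delta_\k^+}(1-e^{-\beta})$, antisymmetrization over $\CW_\k$, and the iterated residue in $z$. What makes this go through is that $Y_1$ is generic only relative to the roots whose restriction vanishes, so the deformation introduces no spurious cancellations among the surviving linear forms, and every manipulation is a finite operation on rational functions in the variables $e^{-\langle\psi,z\rangle}$ and $\epsilon$; the order-of-limits issues are therefore resolved by the Laurent-expansion formalism rather than by any analytic subtlety.
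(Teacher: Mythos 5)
Your proposal is correct and takes essentially the same route as the paper, whose own justification of Theorem~\ref{theo:branchingsingular} is simply the remark ``By arguing as in the proof of Theorem~\ref{theo:branch}.'' You have spelled out the details of that argument: substitute the $\epsilon$-regularized summands for the naive restriction, check that the resulting $G_w(z,Y_1)$ has denominator a product of powers of $(1-e^{-\psi})$ with $\psi\in\Psi$, polarize, express as (higher-multiplicity) partition functions, apply Lemma~\ref{lem:KandTmult}, Theorem~\ref{thm:multT}, and Lemma~\ref{lem:OS bases}, and close with Lemma~\ref{lem:equaquasipoly}. The one point you correctly flag as needing an explicit check --- that Lemma~\ref{lem:OS bases} applies when $h(z)$ is a product of \emph{powers} of linear forms from $\Psi$, not just a product over a sublist --- is indeed a mild but necessary extension, and your justification (the Jeffrey--Kirwan residue argument in the proof of that lemma depends only on the underlying set of hyperplanes, not on multiplicities, and higher multiplicities produce no new bases) is sound.
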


Let us give two simple examples to illustrate the difference of the computation of $S^{w,\Sigma}_{\lambda,\mu}$ between the two cases discussed in Sections~\ref{regular}
and~\ref{notregular}.

\begin{example}
  We consider first the case of $G=U(4)$ and $K=SU(2)\times SU(2)$ embedded in $G$ by identifying $\C^4=\C^2\otimes \C^2$.
  We identify $i\t_\g$ and $i\t_\g^*$ with $\R^4$, as usual.
  Thus the weights of the representation of $T_G$ on $\C^4$ are $\{ \epsilon_1=[1,0,0,0]$, $\epsilon_2=[0,1,0,0]$, $\epsilon_3=[0,0,1,0]$, $\epsilon_4=[0,0,0,1] \}$, and the roots of $G$ are $\{ \epsilon_i - \epsilon_j, i\neq j\}$.
  Then $\t_\k$ is two-dimensional, and none of the roots of $\g$ vanishes identically on $i\t_\k$.
  Let $\lambda=[k,k,0,0]$  be a highest weight for a representation of $G$.
  If $\Sigma=\{\epsilon_1-\epsilon_2,\epsilon_3-\epsilon_4\}$, then $\lambda\in \Lambda^\Sigma_{G,\geq0}$.

  The restriction of $\Delta_\uu$ to $i\t_\k$ is $[\alpha_2,-\alpha_2,-\alpha_2-\alpha_1,-\alpha_2+\alpha_1]$  where $\alpha_1,\alpha_2$ are the simple roots of $SU(2)\times SU(2).$
  Let $z=[z_1,z_2]$ denote the coordinates of $z\in i\t_\k$ with respect to the dual basis of $\{\alpha_1, \alpha_2\}$.
  For the identity permutation, $w=1$, we have
  $$\frac{e^{\langle \overline{w(\lambda)},z\rangle}}{\prod_{\alpha \in \Delta_{\uu}}(1-e^{-\la \overline{w(\alpha)},z\ra})} = \frac{u_2^k }{\left( 1-\frac{1}{u_2} \right) ^{2} \left( 1-{\frac 1 {u_1u_{{2}}}} \right)  \left( 1-{\frac {u_{{1}}}{u_{{2}}}} \right) },$$
  where we made the change of variables  $e^{z_1}=u_1$, $e^{z_2}=u_2$.
  To obtain the restricted character, we have to sum over 6 permutations $w$, corresponding to $\CW_\g/\CW_\lp \cong \mathfrak S_4/\la(1 2), (3 4)\ra$.
  The result is:
  {\small
  \begin{align*}
    &\qquad \chi_{\lambda}(\exp(z))
    = \frac{u_2^k}{\left( 1-\frac{1}{u_2} \right)^2 \left( 1-\frac{u_1}{u_2}\right)  \left( 1-\frac {1}{u_1u_2}\right) } \\
    &+ \frac{u_1^k}{\left( 1-\frac{1}{u_1}\right)^2 \left( 1-\frac {1}{u_1u_2} \right) \left( 1-\frac{u_2} {u_1} \right) }+\frac {2}{ \left( 1-u_{{2}} \right)  \left( 1-u_{{1}} \right)  \left( 1-\frac{1}{u_{{1}}} \right) \left( 1-\frac{1}{u_2}\right) } \\
    &+ \frac {u_1^{-k}} { \left( 1-u_1 \right) ^{2} \left(1  -u_{{1}}u_{{2}}\right) \left( 1-\frac {u_1}{u_2} \right) }+\frac {u_2^ {-k}}{ \left( 1-u_2\right) ^{2} \left( 1-u_{{1}}u_{{2}} \right) \left( 1-\frac{u_{{2}}}{u_1 }\right) }.
  \end{align*}
  }Moreover, for the permutation $w=1$ and $\mu=[\mu_1,\mu_2]$ (with $\mu_1,\mu_2$ integers), the function $S^{w,\Sigma}_{\lambda,\mu}$ defined by Eq.~\eqref{eqSSigma} is given by
  $$S^{w,\Sigma}_{\lambda,\mu}(z_1,z_2) = \left(1-\frac 1 {u_1}\right) \left(1-\frac1 {u_2}\right) \frac{u_2^k u_1^{-\mu_1}u_2^{-\mu_2}} {\left( 1-\frac{1}{u_2} \right) ^2 \left( 1-\frac {u_1}{u_2}\right)  \left( 1-\frac {1}{u_1u_2}\right) },$$
  and similarly for the other permutations.
\end{example}

\begin{example}
  Consider now $G=U(4)$ and $K_1=SU(2)\times \{1\}$, which is a subgroup of $K=SU(2)\times SU(2)$ discussed in the preceding example.
  Continuing with the above example and notation, we would like to have an expression for the restriction of $\chi_\lambda$ to $T_{K_1}$ as a sum of explicit meromorphic functions.
  For example, we may again consider the identity permutation $w=1$.
  Then the term
  $$\frac{u_2^k }{\left( 1-\frac{1}{u_2} \right) ^{2} \left( 1-{\frac 1 {u_1u_{{2}}}} \right)  \left( 1-{\frac {u_{{1}}}{u_{{2}}}} \right) },$$
  \emph{cannot} be restricted to  $K_1=SU(2)\times \{1\}$, since $u_2-1$ vanishes identically on $K_1$.
  Following the prescription explained above, for $z=[z,0]$ and $Y_1=[0,1]$, $w=id$, we compute that
  \begin{align*}
    &\qquad G_w(z,Y_1) = -\frac{1}{2}\,{\frac { \left( k+4 \right) ^{2}u}{ \left( u-1 \right) ^{2}}}+\frac{1}{2}{\frac { \left( k+4 \right) u}{ \left( u-1 \right) ^{2}}} \\
    &+{\frac { \left( k+4 \right) {u}^{2}}{\left( u-1 \right) ^{3}}}-{\frac { \left( k+4 \right) u}{\left( u-1 \right) ^{3}}}-{\frac {{u}^{3}}{ \left( u-1 \right) ^{4}}}+{\frac {{u}^{2}}{ \left( u-1 \right)^{4}}}-{\frac {u}{ \left( u-1 \right) ^{4}}},
  \end{align*}
  where $u=e^z$.
  And for $\mu$ an integer, our function $S^{w,\Sigma}_{\lambda,\mu}$ in Eq.~\eqref{eq:new S} is given by
  \[ S^{w,\Sigma}_{\lambda,\mu}(z) = \left(1-\frac1u\right) u^{-\mu}G_w(z,Y_1). \]
\end{example}

{\color{red}

}

\begin{remark}\label{heckversussingular}
  Our method of computing $m_{G,K}(\lambda,\mu)$ is a generalization of the Kostant-Heckman branching theorem.
  If $\lambda$ is singular, then the formula obtained for $\chi_\lambda|_{T_K}$ is not very explicit -- but it has two obvious advantages from an algorithmic point of view:
  First, there is a smaller number of elements of the Weyl group over which we sum up.
  Second, the function of which we compute the residues has fewer poles.
  We fully take advantages of these points in our algorithm, in particular when we compute Hilbert series in Section~\ref{subsec:HS}.
\end{remark}

%%%%%%%%%%%%%%%%%%%%%%%%%%%%%%%%%%%%%%%%%%%%%%%%%%%%%%%%%%%%%%%%%%%%%
\section{Examples}\label{sec:examples}
%%%%%%%%%%%%%%%%%%%%%%%%%%%%%%%%%%%%%%%%%%%%%%%%%%%%%%%%%%%%%%%%%%%%%

Our branching formula (Theorem~\ref{theo:branch}) together with Eq.~\eqref{eq:kron via branching} can be readily turned into an algorithm for computing Kronecker coefficients.
We present this algorithm in Appendix~\ref{app:thealgoKro}.
In this section, we discuss a number of interesting examples of dilated Kronecker coefficients and Hilbert series computed using our algorithm.

%....................................................................
\subsection{Dilated Kronecker coefficients}\label{subsec:dilated}
%....................................................................

\subsubsection{$\C^4\otimes\C^2\otimes\C^2$}
This example has been studied in complete details by Briand-Orellana-Rosas~\cite{BOR1}.
In particular, a cone decomposition into $74$ cones of quasi-polynomiality is given, together with the corresponding quasi-polynomials, which have degree 2 and period 2.

Our algorithm can reproduce each quasi-polynomial given a point in the interior of its cone of validity (as well as the corresponding dilated Kronecker coefficients).
For example, in the tope defined by $\lambda=[132, 38, 19, 11]$, $\mu=[110, 90]$, $\nu=[120, 80]$, the Kronecker coefficient is given by the quasi-polynomial
\begin{align*}
  g(\lambda,\mu,\nu) &= \frac12\lambda_3\mu_1 + \frac12\lambda_2\lambda_3 - \frac12\nu_1 + \frac12\lambda_2 + \frac12\lambda_3 - \lambda_4 + \frac12\mu_1 \\
  &- \frac14\lambda_3^2 - \frac12\lambda_3\nu_1 - \frac12\lambda_4\mu_1 - \frac12\lambda_2\lambda_4 + \frac18(-1)^{\lambda_2 + \lambda_4 + \mu_1 + \nu_1} \\
  &+ \frac34 + \frac14\lambda_4^2 + \frac12\lambda_4\nu_1 + \frac18(-1)^{\lambda_2 + \lambda_3 + \mu_1 + \nu_1}
\end{align*}
and the dilated Kronecker coefficient for these three Young diagrams is equal to
\[ g(k\lambda,k\mu,k,\nu) = 52 k^2 + \frac{25}2 k + \frac34 + \frac14 (-1)^k. \]
We now present several new examples that have not yet appeared in the literature.

\subsubsection{$\C^2\otimes\C^2\otimes\C^2\otimes \C^2$ ($4$ qubits)}
The Kirwan polytope has been described by Higuchi-Sudbery-Szulc~\cite{H-Su-Sz} (see Example~\ref{ex:N qubit cone}).
We do not know the number of cones in a cone decomposition of $C_K(\CH)$ into cones of quasi-polynomiality.
Nevertheless, given highest weights $\alpha,\beta,\gamma,\delta$, we can compute $g(k\alpha,k\beta,k\gamma,k\delta)$ as a quasi-polynomial in $k$.
It is a quasi-polynomial of degree at most $7$ and period $6$.
More precisely, it is of the form
$$P(k)+(-1)^k Q(k)+R(k),$$
where $P(k)$ is a polynomial of degree at most $7$, $Q(k)$ of degree at most $3$, and $R(k)$ a periodic function of $k$ $\mod$ 3.

Here is an example.
When $\alpha=\beta=\gamma=\delta=[2,1]$, then:
% {\small\begin{align*}
% &\qquad g(k\alpha,k\beta,k\gamma,k\delta)
% = \biggl[ \frac{23}{241920} k^7
% + \frac{13}{5760} k^6
% + \frac{155}{6912} k^5
% + \frac{139}{1152} k^4
% + \frac{81601}{207360} k^3 \\
% &+ \frac{9799}{11520} k^2
% + \frac{38545}{32256} k
% + \frac{5279}{6912} \biggr]
% \;+\; (-1)^k \biggl[
%   \frac1{1536} k^3
% + \frac5{256} k^2
% + \frac{179}{1536} k
% + \frac{51}{256}
% \biggr]
% \\
% &+ \biggl[ \biggl(\frac5{243} + \frac1{243}\theta\biggr) (\theta^2)^k + \biggl( \frac4{243} - \frac1{243} \theta \biggr) \theta^k \biggr]
% \end{align*}}
% where $\theta$ is a primitive root, $\theta^3=1$.
% Rearranging,
\begin{align*}
&g(k\alpha,k\beta,k\gamma,k\delta)
= \frac{23}{241920} k^7
+ \frac{13}{5760} k^6
+ \frac{155}{6912} k^5
+ \frac{139}{1152} k^4 \\
&\qquad+ \biggl( \frac{81601}{207360} + \frac1{1536} (-1)^k \biggr) k^3
+ \biggl( \frac{9799}{11520} + \frac5{256} (-1)^k \biggr) k^2 \\
&\qquad+ \biggl( \frac{38545}{32256} + \frac{179}{1536} (-1)^k \biggr) k
+ C(k),
\end{align*}
with
\[ C(k) = \biggl(\frac5{243} + \frac1{243}\theta\biggr) (\theta^2)^k + \biggl( \frac4{243} - \frac1{243} \theta \biggr) \theta^k + \frac{51}{256} (-1)^k + \frac{5279}{6912}, \]
where $\theta$ is a third primitive root of unity, $\theta^3=1$.
Thus the degree-zero term $C(k)$ is a sum of a periodic function of period $3$ and of a periodic function of period $2$, leading to periodic behavior modulo $6$.
Its values on $0,1,2,3,4,5$ are:
$$1, \frac{5725}{10368}, \frac{76}{81}, \frac{77}{128}, \frac{77}{81}, \frac{5597}{10368}$$
This determines the Kronecker coefficients $g(k\alpha,k\beta,k\gamma,k\delta)$ for any value of $k$.
Starting from $k=0$, they are
$$1, 3, 13, 39, 110, 264, 588, 1194, 2289, 4134, 7152, 11865, \ldots$$

\subsubsection{$\C^6\otimes \C^3\otimes \C^2$}\label{exa:632}

When $n_2=3$, $n_3=2$, it is sufficient to consider the case when $n_1=6$ (Lemma~\ref{lem:Cauchy reduction}).
In this case, the maximum degree of the quasi-polynomial function $g(k\lambda,k\mu,k\nu)$ is 8 according to the formula in Example~\ref{ex:kron degree}.
% $\prod_{j=1}^s n_j-\sum_{j=1}^s\frac{n_j(n_j-1)}{2} -\sum_{j=1}^s n_j+s-1=8.$
We find that the dilated Kronecker coefficient $k\mapsto g(k\lambda,k\mu,k\nu)$ is a quasi-polynomial function of the form
$$P(k)+(-1)^k Q(k)+R(k),$$
where $P(k)$ is a polynomial of $k$ of degree less or equal to $8$, $Q(k)$ of degree less or equal to $2$ and $R(k)$ is a periodic function of $k$ $\mod$ 3.
Here is an example where the degree of the quasi-polynomial is the maximum one:

We fix $\lambda=[15,10,9,4,3,2]$, $\mu=[21,14,8]$, $\nu=[27,16]$ and compute:
\begin{align*}
  &g(k\lambda,k\mu,k\nu)
= {\frac {413587}{967680}} k^8
+{\frac {66773}{17280}} k^7
+{\frac {3072191}{207360}} k^6
+{\frac {1091771}{34560}} k^5 \\
&\qquad+{\frac {710713}{17280}} k^4
+{\frac {871363}{25920}} k^3
+\left( (-1)^k\frac{55}{1024}+\frac {1833073}{107520}) \right)  k^2 \\
&\qquad+\left((-1)^k\frac{79}{512}+\frac {117661}{23040}\right) k
+\left( {\frac {10}{243}}+{\frac {4}{243}}\,\theta \right) \theta^k \\
&\qquad+ \left( {\frac {2}{81}}-{\frac {4}{243}}\,\theta \right) (\theta^2)^k
+{\frac {275}{2048}} (-1)^k
+\frac {398071}{497664}
\end{align*}
where $\theta$ is a third primitive root of unity.
The degree-zero term is thus a periodic function $C(k)$, of period $6$, whose values on $0,1,2,3,4,5$ are given by
$$1,{\frac {50429}{82944}},{\frac {25}{27}},{\frac {749}{1024}},{\frac{71}{81}},{\frac {18175}{27648}}.$$
Thus, the values of $g(k\lambda,k\mu,k\nu)$, starting from $k=0$, are
$$1,148,3570,34140,197331,829417,2797696,\ldots$$

\medskip

We now illustrate some other particularly interesting examples that connect the behavior of the quasi-polynomial function $g(\lambda,\mu,\nu)$ on cones $\c_a$ adjacent to a facet $F$ of the Kirwan cone (cf.\ Section~\ref{subsec:faces}).

Recall that a wall of the Kirwan polytope is called \emph{regular} if it intersects the interior of the Weyl chamber.
Since the Kirwan cone for the action of $U(6)\times U(3)\times U(2)$ in $\C^{6}\otimes \C^3 \otimes \C^2$ is solid, it is determined by the inequalities defined by regular walls, together with those determining the Weyl chamber.
The regular walls have been described by Klyachko~\cite{Klyachko}.
There are 5 types of inequalities in $\lambda=[\lambda_1, \lambda_2,\lambda_3,\lambda_4,\lambda_5,\lambda_6]$, $\mu=[\mu_1,\mu_2,\mu_3]$, $\nu=[\nu_1,\nu_2]$, which we recall in Table~\ref{Wt} (as well as the equations $\lvert\lambda\rvert=\lvert\mu\rvert=\lvert\nu\rvert$).

More precisely, for each of the inequalities in Table~\ref{Wt}, there is a particular subset $S$ of $\mathfrak {S}_6\times \mathfrak {S}_3 \times \mathfrak {S}_2$ computed by Klyachko such that the permuted inequality is an irredundant inequality of the Kirwan cone.
Each subset $S$ contains the identity, so the inequalities in Table~\ref{Wt} are particular examples of regular walls of the Kirwan cone, which we denote by $F_I$, $F_{II}$, etc.

\begin{table}
{\tiny
\begin{center}
\begin{tabular}{|c|l|}\hline
Type& Inequality \\
\hline
\hline
I   & $\makebox[0pt][l]{$F_{I}$}\phantom{F_{III}}: \nu_1-\nu_2-\lambda_1-\lambda_2-\lambda_3+\lambda_4+\lambda_5+\lambda_6 \leq 0$ \\
\hline
II  & $\makebox[0pt][l]{$F_{II}$}\phantom{F_{III}}: \mu_1+\mu_2-2\,\mu_3-\lambda_1-\lambda_2-\lambda_3-\lambda_4+2\,\lambda_5+2\,\lambda_6\leq 0$ \\
\hline
III & $\makebox[0pt][l]{$F_{III}$}\phantom{F_{III}}: 2\,\mu_1-2\,\mu_3+\nu_1-\nu_2-3\,\lambda_1-\lambda_2-\lambda_3+\lambda_4+\lambda_5+3\,\lambda_6\leq 0$ \\
\hline
IV  & $\makebox[0pt][l]{$F_{IV}$}\phantom{F_{III}}: 2\,\mu_1+2\,\mu_2-4\,\mu_3+3\,\nu_1-3\,\nu_2-5\,\lambda_1-5\,\lambda_2+\lambda_3+\lambda_4+\lambda_5+7\,\lambda_6\leq 0$ \\
\hline
V   & $\makebox[0pt][l]{$F_{V}$}\phantom{F_{III}}: 4\,\mu_1-2\,\mu_2-2\,\mu_3+3\,\nu_1-3\,\nu_2-7\,\lambda_1-\lambda_2-\lambda_3-\lambda_4+5\,\lambda_5+5\,\lambda_6\leq 0$ \\
\hline
\end{tabular}
\end{center}}
\caption{Types of regular walls for $\C^6 \otimes \C^3 \otimes \C^2$~\cite{Klyachko}.}\label{Wt}
\end{table}

In Table~\ref{weq}, we give a point $v_F=[\lambda_F,\mu_F,\nu_F]$ in the relative interior of the facet $F \cap C_K(\CH)$ for each $F\in \{F_I,F_{II}, F_{III}, F_{IV},F_V\}$.
We also display the corresponding dilated Kronecker coefficients $g(k v_F)$, which is of the maximum degree (as predicted by \cite[Lemma 37]{BV2015}) among the dilated coefficients $g(k\lambda, k\mu,k\nu)$ when $(\lambda, \mu,\nu)$ varies in a facet of the given type $F$.

\begin{table}
{\tiny
\begin{center}
\begin{tabular}{|l|l|l|}
\hline
Facet & $v_F=[\lambda_F,\mu_F,\nu_F]$ & $g(k\lambda_F,k\mu_F,k\nu_F)$ \\
\hline\hline
$F_I$ & $[[288, 192, 174, 120, 30, 6], [343, 270, 197], [654, 156]]$ & $1+17k$ \\
\hline
%$F_{II}$&$[[600, 372, 300, 156, 96, 12], [876, 552, 108], [930, 606]]$&$1+{\frac {311}{2}}\,k+{\frac {21051}{2}}\,{k}^{2}+242154\,{k}^{3}$\\ \hline
$F_{II}$ & $[[300, 186, 150, 78, 48, 6], [438, 276, 54], [465, 303]]$ & \raisebox{-.05cm}{$\frac{121077}{4} k^{3} + \frac{21051}{8} k^2$} \\[.1cm]
&& \raisebox{-.05cm}{$+ \frac{311}{4} k + \frac 3 {16} (-1)^k + \frac{13}{16}$} \\[.1cm]
\hline
$F_{III}$ & $[[47, 35, 23, 13, 5, 1], [76, 38, 10], [85, 39]]$ & $1$ \\
\hline
$F_{IV}$ & $[[276, 204, 120, 66, 30, 6], [351, 273, 78], [552, 150]]$ & $1+36k$ \\
\hline
$F_V$ & $[[276, 198, 126, 66, 48, 6], [406, 201, 113], [536, 184]]$ & $1+41k$ \\
\hline
\end{tabular}
\end{center}}
\caption{Dilated Kronecker coefficients for points in the interior of regular walls.}\label{weq}
\end{table}

Let $C_{III}=F_{III} \cap C_K(\CH)$.
From the reduction principle of multiplicities on regular faces, we know that the restriction of $g(\lambda,\mu,\nu)$ to $C_{III}\cap \Lambda_K$ (or to any facet obtained by the Klyacho permutations) is identically $1$.
Indeed, let $X_0=[[-3,-1,-1,1,1,3],[2,0,-2],[1,-1]]$.
This is an element of $i\t_\k$ perpendicular to the wall associated to $C_{III}$.
Let $K_0$ denote the stabilizer of $X_0$ in $K$, and $\CH^0$ the subspace of $\CH$ stable by $X_0$.
Then $K_0$ is isomorphic to the subgroup
$$\bigl( U(1)\times U(2)\times U(2)\times U(1) \bigr) \times \bigl( U(1)\times U(1)\times U(1) \bigr)  \times \bigl( U(1)\times U(1) \bigr)$$
of $U(6)\times U(3)\times U(2)$.
The multiplicity function $m_K^{\CH}$ restricted to $C_{III}\cap \Lambda_K$ coincides with $m_{K_0}^{\CH_0}$.
It is easily computed to be identically $1$ on $C_{III}\cap \Lambda_K$.
Thus, any element of $C_{III}\cap \Lambda_K$ (or any facet obtained by the Klyacho  permutations) is \emph{stable}.
We recall that a point $\lambda$ is called \emph{stable} if the function $k\mapsto m_K^\CH(k\lambda)$ is a bounded function of $k$ (then it necessarily takes the value zero or one~\cite{pep-RR}).

\medskip

For each of the cases in Table \ref{weq} we can also compute symbolically a quasi-polynomial function coinciding with the Kronecker coefficients on a closed solid cone $\c_{v_F}$ of $C_K(\CH)$ containing the element $v_F$.
Following the general method explained in Appendix~\ref{app:thealgoKro}, we compute an element $v_F^{\epsilon}$ close to $v_F$ that is not on any admissible wall.
For example for $F_I$ and $v_{F_I}$, we can choose
\[ v_{F_I}^{\epsilon}=[[291, 194, 175, 120, 30, 6], [347, 272, 197], [659, 157]]. \]
Then the function $g(\lambda,\mu,\nu)$ is on the tope $\tau(v_F^{\epsilon})$ containing $v_F^{\epsilon}$ given by a quasi-polynomial function.
The closure $\c_{v_F}$ of $\tau(v_F^{\epsilon})$ contains $v_F$ and $\c_{v_F}$ is a cone of quasi-polynomiality adjacent to the facet $F \cap C_K(\CH)$.
The degree of the quasi polynomial function $g(\lambda,\mu,\nu)$ on $\c_{v_F}$ is $8$, as we already know.
When we restrict this quasi-polynomial to the ``ray'' $k v_F$, we do indeed get $g(k v_F)$.

We summarize our results in Table~\ref{wall1}.
We find that, remarkably, the symbolic function on $\c_{v_{F_I}}$ is polynomial, instead of merely quasi-polynomial (first row).
It is a striking fact that this polynomial function is divisible by $7$ linear factors with constant values $1,2,3,4,5,6,7$ on the face $F_I$.
Thus the restriction of $g(\lambda,\mu,\nu)$ to $F_I$ is a linear polynomial (third row).
When evaluated on $v_{F_I}$ it gives indeed $1+17k$, as we had previously computed independently directly using the element $v_{F_I}.$

\begin{table}
{\tiny
\begin{center}
\begin{tabular}{|l|l|}
\hline
$[\lambda,\mu,\nu] \in \c_{v_{F_I}}$&$g(\lambda,\mu,\nu)  $\\
\hline
\hline
& $\frac{1}{5040} \left( \lambda_1+\lambda_2+\lambda_3-\nu_1+7\right)\left( \lambda_1+\lambda_2+\lambda_3-\nu_1+6\right)$ \\ %\cline{1-4}
$[\lambda,\mu,\nu]$ & $\left( \lambda_1+\lambda_2+\lambda_3-\nu_1+5\right)\left( \lambda_1+\lambda_2+\lambda_3-\nu_1+4\right)$ \\
%\raisebox{1.5ex}[0pt]{
& $\left( \lambda_1+\lambda_2+\lambda_3-\nu_1+3\right)\left( \lambda_1+\lambda_2+\lambda_3-\nu_1+2\right)$ \\
& $\left( \lambda_1+\lambda_2+\lambda_3-\nu_1+1\right)\left( \lambda_1+\lambda_2+\lambda_4+\lambda_5-\mu_1-\mu_2+1\right)$ \\
\hline
$k\ v_{F_I}^\epsilon$ & $ \frac{1}{5040}(k+7)(k+6)(k+5)(k+4)(k+3)(k+2)(k+1)(16k+1)$ \\
\hline
$[\lambda,\mu,\nu] \in F_I \cap \c_{v_{F_I}}$ & $1+\frac{503}{140}(\lambda_1+\lambda_2)+\lambda_4+\lambda_5+\frac{363}{140}(\lambda_3+\nu_1)$ \\
\hline
$k\ v_{F_I}$ & $1+17k$ \\
\hline
\end{tabular}
\end{center}}
\caption{Results for the regular wall of type $I$}\label{wall1}
\end{table}

We do not obtain such nice expressions in the other cases (in particular, the quasi polynomials obtained are not polynomials), but we can nonetheless compute the symbolic quasi-polynomials.
The results of the computations are too long to be included here.

\bigskip

Let us remark that we do not know how to compute the degree when $(\lambda,\mu,\nu)$ is on a face defined by the Weyl chamber (e.g., see Section~\ref{subsec:HS} for the case of three rectangular tableaux).
Here is an example for which the degree is smaller than expected for singular $\mu$.
Consider $\lambda=[9,7,5,3,2,1]$, $\mu=[9,9,9]$, $\nu=[14,13]$.
Then the dilated Kronecker coefficient is given by the following formula:
\begin{align*}
  g(k\lambda,k\mu,k\nu)
= \frac {55}{288} k^5
+ \frac {19}{24} k^4
+ \frac {617}{432} k^3
+ \frac {17}{12} k^2
+ \left( {\frac {13}{64}} (-1)^k + \frac {67}{64} \right) k \\
+ \frac {1}{81} \theta^k \left( -2\,\theta+8 \right)
+ \frac {1}{81} (\theta^2)^k \left( 2\,\theta+10 \right)
+ \frac {85}{144}
+ \frac{3}{16} (-1)^k
\end{align*}
Here $\theta$ is  again a third primitive root of unity, $\theta^3=1$.
Thus the term of degree zero is a periodic function $C(k)$ that takes the following values on $0,1,2,3,4,5$:
$$1, 71/216, 17/27, 5/8, 19/27, 55/216$$
Of course, the value of $g(0,0,0)$ is equal to $1$.
Here, $g(\lambda,\mu,\nu)=5$,
%Set $Q(k)= \left( {\frac {13}{64}}\, \left( -1 \right) ^{k}+{\frac {67}{64}}
% \right) k+{\frac {17}{12}}\,{k}^{2}+{\frac {617}{432}}\,{k}^{3}+{
%\frac {19}{24}}\,{k}^{4}+{\frac {55}{288}}\,{k}^{5}
%$ then
and, for instance, $g(17\lambda, 17\mu, 17\nu)=344715.$

\subsubsection{$\C^3\otimes\C^3\otimes\C^3$ (3 qutrits)}\label{C333}
%Here  $\CH=\C^3\otimes \C^3\otimes \C^3.$
In this case, the multiplicity function $k\mapsto g(k\lambda,k\mu,k\nu)$ is a quasi-polynomial function of degree at most $11$, whose constant term is a periodic function of $k$ $\mod$ 12.
%$$f(k)+(i)^k g(k)+h(k)$$ where $f(k)$ is a polynomial of $k$ of  degree at most $11$ and $h(k)$ is a periodic function of $k {\pmod {12}}$
The actual numerical values can be computed rather quickly using the algorithm in Appendix~\ref{app:thealgoKro}.

Let us give an example of the dilated Kronecker coefficient for $\lambda=\mu=\nu=[4,3,2]$.
Here, $g(k\lambda,k\mu,k\nu)$ has precisely degree 11.
We omit the full formula as it is too long.
The periodic term for the coefficient of degree zero is given on $k=0,\dots,11$ by the values:
{\footnotesize{
$$1, \frac{1166651}{5308416}, \frac{13403}{20736}, \frac{29899}{65536}, \frac{59}{81}, \frac{1166651}{5308416}, \frac{235}{256}, \frac{980027}{5308416}, \frac{59}{81}, \frac{32203}{65536}, \frac{13403}{20736}, \frac{980027}{5308416}$$
}}

%....................................................................
\subsection{Rectangular tableaux and Hilbert series}\label{subsec:HS}
%....................................................................

In Table~\ref{tab:hilbert} we give a list of the Hilbert series associated with the Kronecker coefficients for rectangular partitions (cf.\ Example~\ref{Hilbertseries}).
We use the following notation:
In the first column, $(\C^2)^3=\C^2\otimes\C^2\otimes\C^2$, $[[1,1]]^3=[[1,1],[1,1],[1,1]],$ $\C^{[4,3,3]}=\C^4\otimes \C^3\otimes \C^3$, and similarly.
The second column refers to the choice of the parameters $[\lambda,\mu,\nu]$, and the third column gives teh Hilbert series $\sum_k m(k)t^k$, where $m(k)=g(k\lambda,k\mu,k\nu)$ is the Kronecker coefficient.

The Hilbert series in the third row for $(\C^2)^5$, the case of $5$ qubits, is
\begin{equation}\label{eq:HS22222}
  HS_{22222} = \frac {P(t)} {(1-t^2)^5(1-t^3)(1-t^4)^5(1-t^{5})(1-t^{6})^5},
\end{equation}
where
{\small
\begin{align*}
&P(t)=t^{52}+16\, t^{48}+9\, t^{47}+82\, t^{46}+145\, t^{45}+383\, t^{44} + 770\, t^{43} \\
&+1659\, t^{42} +3024\, t^{41}+5604\, t^{40}+9664\, t^{39} +15594\, t^{38}+24659\, t^{37} \\
&+36611\, t^{36} +52409\, t^{35}+71847\, t^{34}+95014\, t^{33}+119947\, t^{32} \\
&+146849\, t^{31} +172742\, t^{30} +195358\, t^{29}+214238\, t^{28}+225699\, t^{27} \\
&+229752\, t^{26}+225699\, t^{25} +214238\, t^{24} +195358\, t^{23} +172742\, t^{22} \\
&+146849\, t^{21}+119947\, t^{20} +95014\, t^{19} +71847\, t^{18} +52409\, t^{17} \\
&+36611\, t^{16}+24659\, t^{15}+15594\, t^{14} +9664\, t^{13}+5604\, t^{12} +3024\, t^{11} \\
&+1659\, t^{10} +770\, t^{9} +383\, t^{8}+145\, t^{7}+82\, t^{6}+9\, t^{5}+16\, t^4+1.
\end{align*}
}
We remark that the result in~\cite{LUTHI} corresponds to the series $\sum_k m(k) t^{2k}$ and has a misprint on the value of the coefficient $a_n$ for $n=42$ (corresponding to the coefficient of $t^{21}$ in our formula for $P$), as the numerator given is not palindromic (cf.\ Remark~\ref{rem:gorenstein}).
That is, the coefficient $a_{42}$ in~\cite{LUTHI} has to be replaced by $146849.$

\begin{table}
\begin{center}
\begin{tabular}{|l|c|l|}
\hline
Type & Parameters & Hilbert series \\
\hline
\hline
$(\C^2)^3$&$[[1,1]]^3 $ &$\frac{1}{1-t^2}$ \\
\hline
$(\C^2)^4$&$[[1,1]]^4 $ &$\frac{1}{(1-t)(1-t^2)^2(1-t^3)}$ \\
\hline
$(\C^2)^5$&$[[1,1]]^5$ &$HS_{22222}$ {\footnotesize (see Eq.~\eqref{eq:HS22222})} \\
\hline
$(\C^3)^3$&$[[1,1,1]]^3 $ &$\frac{1}{(1-t^2)(1-t^3)(1-t^4)}$ \\
\hline
$\C^{[4,3,3]}$&$[[3,3,3,3], [4,4,4], [4,4,4]]$&$\frac {1+{t}^{9}}{ \left( 1-{t}^{2} \right) ^{2} \left(1- {t}^{4} \right)  \left( 1-t\right)  \left(1- {t}^{3} \right) }$ \\
\hline
\end{tabular}
\end{center}
\caption{Hilbert series associated with rectangular Kronecker coefficients}\label{tab:hilbert}
\end{table}

\medskip

We now give the dilated Kronecker coefficients in the first, second and fourth example considered in Table~\ref{tab:hilbert} (we omit the other two because the formulae are too long to be reproduced here).
The results are as follows:
{\small
\begin{align*}
&g(k[1,1],k[1,1]) = \frac12 + \frac12 (-1)^k, \\
&g(k[1,1],k[1,1],k[1,1], k[1,1])
= \frac {23}{36}
+ \frac{1}{4} (-1)^k
+ \frac{1}{27} \theta^k \left( 2+\theta \right) \\
&\qquad+ \frac{1}{27} (\theta^2)^k \left(1-\theta \right)
+ \left( \frac {29}{48} + \frac{1}{16} (-1)^k \right) k
+\frac{1}{16} k^2
+\frac {{k}^{3}}{72}, \\
&g(k[1,1,1],k[1,1,1],k[1,1,1])
= \frac {107}{288}
+ \frac {9}{32} (-1)^k \\
&\qquad+ \left(1 + (-1)^k \right) \frac{1}{16}{i}^{k}
+ \left( 1+ (-1)^{k+1} \right) \frac{1}{16} i^{k+1}
+ \frac{1}{9} (\theta^2)^k \\
&\qquad+ \frac{1}{9} \theta^k
+ \left( \frac{1}{16} (-1)^k + \frac{3}{16} \right) k
+ \frac{1}{48} k^2,
\end{align*}
}
where $\theta$ is a third primitive root of unity.
As an example, we show how the latter coefficient, $g(k[1,1,1],k[1,1,1],k[1,1,1])$, can equivalently be expressed by a list of polynomials on the cosets.
We have 12 cosets, and thus a sequence of 12 polynomials for $k=0,\dots,12$, given by the following list:
{\small{
\begin{align*}
&1+\frac{1}{4}k+\frac{1}{48}{k}^{2}, \;\;
-{\frac {7}{48}}+\frac{1}{8}k+\frac{1}{48}{k}^{2}, \;\;
{\frac {5}{12}}+\frac{1}{4}k+\frac{1}{48}{k}^{2}, \;\;
{\frac {7}{16}}+\frac{1}{8}k+\frac{1}{48}{k}^{2}, \\
&\frac{2}{3}+\frac{1}{4}k+\frac{1}{48}{k}^{2}, \;\;
-{\frac {7}{48}}+\frac{1}{8}k+\frac{1}{48}{k}^{2}, \;\;
\frac{3}{4}+\frac{1}{4}k+\frac{1}{48}{k}^{2}, \;\;
{\frac {5}{48}}+\frac{1}{8}k+\frac{1}{48}{k}^{2}, \\
&\frac{2}{3}+\frac{1}{4}k+\frac{1}{48}{k}^{2}, \;\;
\frac{3}{16}+\frac{1}{8}k+\frac{1}{48}{k}^{2}, \;\;
{\frac {5}{12}}+\frac{1}{4}k+\frac{1}{48}{k}^{2}, \;\;
{\frac {5}{48}}+\frac{1}{8}k+\frac{1}{48}{k}^{2}
\end{align*}
}}
The following are the values of the Kronecker coefficients computed by the above formula for $k=0,\dots,20$:
$$1, 0, 1, 1, 2, 1, 3, 2, 4, 3, 5, 4, 7, 5, 8, 7, 10, 8, 12, 10, 14$$
They are part of what is known as sequence A005044 in the \emph{on-line encyclopedia of integers sequences (OEIS)}.

Observe that in this example the saturation factor is 2.
We recall that the \emph{saturation factor} of a given $\lambda\in C_K(\CH)\cap\Lambda_{K,\geq0}$ is the smallest positive $k$ such that $m_K^\CH(k\lambda)>0$.

\subsubsection{The Hilbert series of measures of entanglement for $4$ qubits}\label{Wallach.ex}
Consider $\CH=(\C^2)^4=\C^2\otimes\C^2\otimes\C^2\otimes \C^2$ and consider the standard action of $U(2)\times U(2)\times U(2)\times U(2)$ on $\CH$.
The space $\CH$ is the space of $4$ qubits.

We now consider the direct sum $\tilde \CH=\CH\oplus \CH$ of two copies of $\CH$, so $\Sym(\tilde\CH)=\Sym(\CH)\otimes \Sym (\CH)$.
The decomposition of  the tensor product representation of $U(2)\times U(2)\times U(2)\times U(2)$ in $\Sym(\tilde \CH)=\Sym(\CH)\otimes \Sym (\CH)$ has been considered by Wallach~\cite{Nolan}.
The Hilbert series of the invariants is called the \emph{Hilbert series of measures of entanglement for $4$ qubits}.

To compute this Hilbert series, it is useful to think of
\[ \tilde \CH=\CH\oplus \CH=\CH\otimes \C^2 = (\C^2\otimes\C^2\otimes\C^2\otimes \C^2) \otimes \C^2 \]
with the action of $K=U(2)\times U(2)\times U(2)\times U(2)\times \{1\}$.
We first regroup
\[ \tilde \CH=\C^2\otimes(\C^2\otimes\C^2\otimes \C^2\otimes \C^2) \]
and consider the action of $U(2)\times U(16)$.
As in the case of $5$ qubits, the Cauchy formula allows us to compute $\Sym(\tilde \CH)$ as a representation of $U(2)\times U(16)$.
Consider  $\lambda=[1,1]$ and $\tilde{\lambda}=[1,1,0,0,\ldots, 0]$, a highest weight for $U(16)$.
Let $K=U(2)\times U(2)\times U(2)\times U(2)$ embedded in $G=U(16)$ and let $K_1=SU(2)\times SU(2)\times SU(2) \times \{1\}$ embedded in $K$.
Thus following the method outlined in Section~\ref{notregular} and Theorem~\ref{theo:branchingsingular}, we can compute the branching coefficient $m(k)=m_{G,K_1}(k\tilde{\lambda},k\mu)$, with $\mu=0$ indexing the trivial representation of $K_1$.
We obtain a quasi-polynomial $m(k)$ of degree 19 and period 6, that we list at the end of this section. % in terms of $6$ polynomials, one for each cosets in $\Z/6\Z$.
Its generating function is the Hilbert series of measures of entanglement for $4$ qubits as computed by Wallach in~\cite{Nolan}.
We have recomputed his formula:
\begin{align*}
  \sum_k m(k) t^k = \frac{P(q)}{(1-q^2)^3(1-q^4)^{11}(1-q^6)^6}
\end{align*}
where $m(k) = \dim \left[\Sym^{2 k}(\tilde \CH)\right]^{SL(\C^2)\times SL(\C^2)\times SL(\C^2)\times SL(\C^2)}$, $q=t^2$, and
{\small
\begin{align*}
&P(q)
=q^{54}+3 q^{50}+20 q^{48}+76 q^{46}+219 q^{44}+654 q^{42}+1539 q^{40} \\
&\qquad+3119 q^{38}+5660 q^{36}+9157 q^{34}+12876 q^{32}+16177 q^{30} \\
&\qquad+18275 q^{28}+18275 q^{26}+16177 q^{24}+12876 q^{22}+9157 q^{20} \\
&\qquad+5660 q^{18}+3119 q^{16}+1539q^{14}+654 q^{12}+219 q^{10}+76 q^{8} \\
&\qquad+20 q^{6}+3 q^{4}+1.
\end{align*}
}

\medskip

We conclude this section by describing the quasi-polynomial $m(k)$.
First, define:
{\tiny
\begin{align*}
&p(k)={\frac {353}{472956150389538816000}} k^{19}+{\frac {353}{3111553620983808000}}k^{18}+{\frac {271067}{33189905290493952000}}k^{17} \\
&\quad+{\frac {90331}{244043421253632000}} k^{16}+{\frac {96329}{8134780708454400}}k^{15}+{\frac {4335209}{15252713828352000}}k^{14} \\
&\quad+{\frac{299075479}{56317712596992000}}k^{13}+\frac{556811179}{7039714074624000}k^{12}+{\frac {1343229996}{14079428149248000}}\,{k}^{11} \\
&\quad+{\frac {1507096313}{159993501696000}}\,{k}^{10}, \\
&p_{\text{even}}(k)={\frac {30016136009\,{k}^{9}}{391095226368000}}+{\frac {8474560763\,{k}^{8}}{16295634432000}}+{\frac {417926105131\,{k}^{7}}{141228831744000}}+{\frac {84164633999\,{k}^{6}}{5884534656000}}, \\
&p_{\text{odd}}(k)={\frac {1920961135001\,{k}^{9}}{25030094487552000}}+{\frac {542157180107\,{k}^{8}}{1042920603648000}}+{\frac {6671912967271\,{k}^{7}}{2259661307904000}}+{\frac {1335013209659\,{k}^{6}}{94152554496000}}
\end{align*}
}

Then the quasi-polynomial $m(k)$ is on the six cosets given by the following 6 polynomials $W_0,\dots,W_5$:
{\tiny
\begin{align*}
&W_0(k)=p(k)+p_{\text{even}}(k)+{\frac {38627139511}{653837184000}}\,{k}^{5}+{\frac {50415619753}{245188944000}}\,{k}^{4}+{\frac {266225257897}{463134672000}}\,{k}^{3} \\
&\qquad+{\frac {4572054901}{3859455600}}\,{k}^{2}+{\frac {14055407\,k}{8953560}}+1 \\
&W_1(k)=p(k)+p_{\text{odd}}(k)+ {\frac {219573425545427}{3813178457088000}}\,{k}^{5}+{\frac {276452038823221}{1429941921408000}}\,{k}^{4}+{\frac {12577822401820393489}{24892428967870464000}}\,{k}^{3} \\
&\qquad+{\frac {572824001947094231}{622310724196761600}}\,{k}^{2}+{\frac {159318923928183241}{166314250686431232}}\,k+{\frac {290588607887}{835884417024}} \\
&W_2(k)=p(k)+p_{\text{even}}(k)+{\frac {38627139511\,{k}^{5}}{653837184000}}+{\frac {36750520335937\,{k}^{4}}{178742740176000}}+{\frac {1745362160646217\,{k}^{3}}{3038626582992000}} \\
&\qquad+{\frac {89590754414783\,{k}^{2}}{75965664574800}}+{\frac {815186343623}{528698764440}}k+{\frac {1506571}{1594323}} \\
&W_3(k)=p(k)+p_{\text{odd}}(k)+{\frac {301217799563}{5230697472000}}\,{k}^{5}+{\frac {379529711549}{1961511552000}}\,{k}^{4}+{\frac {1927034414248049}{3793999233024000}}\,{k}^{3} \\
&\qquad+{\frac {29795123615357}{31616660275200}}\,{k}^{2}+{\frac {109432200819\,k}{104316534784}}+{\frac {261589}{524288}} \\
&W_4(k)=p(k)+p_{\text{even}}(k)+{\frac {28157390911519}{476647307136000}}\,{k}^{5}+{\frac {36724846687937}{178742740176000}\,{k}^{4}}+{\frac {1738714367494217}{3038626582992000}}\,{k}^{3} \\
&\qquad+{\frac {88327521243583}{75965664574800}}\,{k}^{2}+{\frac {2345378642869}{1586096293320}}k+{\frac {1353103}{1594323}} \\
&W_5(k)=p(k)+p_{\text{odd}}(k)+{\frac {301217799563}{5230697472000}}\,{k}^{5}+{\frac {276657428007221}{1429941921408000}}\,{k}^{4}+{\frac {12632281123321577489}{24892428967870464000}}\,{k}^{3} \\
&\qquad+{\frac {583172408085564631}{622310724196761600}}\,{k}^{2}+{\frac {56607866326977347\,k}{55438083562143744}}+{\frac {371050038671}{835884417024}}
\end{align*}
}

The complete quasi-polynomial, with $\theta$ a third primitive root of unity, reads as follows:
{\tiny
\begin{align*}
&m(k) = {\frac {353}{472956150389538816000}}{k}^{19}+{\frac {353}{3111553620983808000}}{k}^{18}+{\frac {271067}{33189905290493952000}}{k}^{17} \\
&\qquad+{\frac {90331}{244043421253632000}}{k}^{16}+{\frac {96329}{8134780708454400}}{k}^{15}+{\frac {4335209}{15252713828352000}}{k}^{14} \\
&\qquad+{\frac {299075479}{56317712596992000}}{k}^{13}+{\frac {556811179}{7039714074624000}}{k}^{12}+{\frac {13432299961}{14079428149248000}}{k}^{11} \\
&\qquad+{\frac {1507096313}{159993501696000}}{k}^{10}+\left(\left( -1 \right) ^{k} {\frac {17}{11890851840}}+{\frac {3841993839577}{50060188975104000}} \right) {k}^{9} \\
&\qquad+ \left(\left( -1 \right) ^{k} {\frac {17}{165150720}}+{\frac {1084529068939}{2085841207296000}} \right) {k}^{8} \\
&\qquad+ \left( \left( -1\right) ^{k} {\frac {817}{247726080}}+{\frac {13358730649367}{4519322615808000}}\right) {k}^{7} \\
&\qquad+ \left(\left( -1 \right) ^{k} {\frac { 91}{1474560}}+{\frac {2681647353643}{188305108992000}} \right) {k}^{6} \\
&\qquad+\left({\frac {2\,(\theta+1)}{1594323}}{\theta}^{k}-{\frac {2\,\theta}{1594323}}\, ( {\theta}^{k} ) ^{2}+{\left( -1 \right) ^{k}\frac {1649}{2211840}}+{\frac {1334555059856737}{22879070742528000}}\right){k}^{5} \\
&\qquad+\left({\frac {(229\,\theta +251)}{4782969}}{\theta}^k+{\frac {(-229\,  \theta+22) }{4782969}}( {\theta}^{k} ) ^{2}+\left( -1 \right) ^{k}{\frac {559}{92160}}+{\frac {570537819046717}{2859883842816000}}\right)k^4 \\
&\qquad+\left({\frac { \left( 3488\,\theta+4192 \right) }{4782969}}{\theta}^{k}+{\frac { ( -3488\,\theta+704) }{4782969}}({\theta}^{k} ) ^{2} +\left( -1 \right) ^{k}{\frac {198924917}{5945425920}}+{\frac {8967103302680393051}{16594952645246976000}}\right )k^3 \\
&\qquad+\left({\frac { \left( 26512\,\theta+34928 \right) }{4782969}}{\theta}^{k}+{\frac {  \left( -26512\,\theta+8416\right) }{4782969}}( {\theta}^{k} ) ^{2}+ \left( -1 \right) ^{k}{\frac {10001959}{82575360}}+{\frac {437463645838719389}{414873816131174400}}\right)k^2 \\
&\qquad+\left({\frac {\left( 100700\,\theta+145244 \right) }{4782969}}{\theta}^{k} +
{\frac { \left( -100700\,\theta+14848\right) }{4782969}}( {\theta}^{k} ) ^{2} \left( -1 \right) ^{k}{\frac {688047337}{2642411520}}+{\frac {6335305750969416391}{4989427520592936960}}\right)k \\
&\qquad+\left({\frac { \left( 5684\,\theta+241220 \right) }{177147}}{\theta}^{k}+{\frac { 1}{177147}}( {\theta}^{k} ) ^{2}+\left( -1 \right) ^{k}{\frac {262699}{1048576}}+{\frac {3413873184941}{5015306502144}}
\right)
\end{align*}
}

\textbf{Acknowledgments.}
We are grateful to Michel Duflo and Robert Zeier for suggestions and comments.
Part of the work for this article was made during the period the authors spent at  the Institute for Mathematical Sciences (IMS) of the National University of Singapore in November/December 2013.
The support received is gratefully acknowledged.
The first author was also partially supported by a PRIN2012 grant.
We are thankful to Michael Walter and the referees who made many comments on the first version of this text.
These comments led (we hope) to an improved exposition of our results.

\begin{appendix}

%%%%%%%%%%%%%%%%%%%%%%%%%%%%%%%%%%%%%%%%%%%%%%%%%%%%%%%%%%%%%%%%%%%%%
\section{The algorithm for Kronecker coefficients}\label{app:thealgoKro}
%%%%%%%%%%%%%%%%%%%%%%%%%%%%%%%%%%%%%%%%%%%%%%%%%%%%%%%%%%%%%%%%%%%%%

We now give our algorithm for computing Kronecker coefficients.
Below, we recall the notation used, and we state the correctness of our algorithm in Theorem~\ref{theo:correct}.
The assumptions of our algorithm are natural and without loss of generality, as explained in Remark~\ref{rem:algo assumptions}.
We discuss some useful variants in Remark~\ref{rem:algo variants} and conclude by reporting on our experiences with our \textsc{Maple} implementation, available at~\cite{Kroneckerwebpage}.

\medskip

\begin{algorithmic}[1]
\Require Young diagrams $\nu^0_j \in P\Lambda_{U(n_j),\geq 0}$ for $j=1,\dots,s$, each specified by a list of integers $[\nu^0_{j,1},\dots,\nu^0_{j,n_j}]$ with $\nu^0_{j,1}\geq\dots\geq\nu^0_{j,n_j}\geq0$.
\Statex \hspace{-.7cm}\textbf{Assumptions:} $s\geq3$, \; $\lvert\nu^0_1\rvert=\dots=\lvert\nu^0_s\rvert$, \; $n_1\geq n_2\geq\dots\geq n_s\geq 2$, \; $\nu^0_{1,2}>0$, \; $n_1\leq M=n_2\dots n_s$.
\Ensure A quasi-polynomial function $g(\nu_1,\dots,\nu_s)$ that agrees with the Kronecker coefficient on a closed polyhedral cone that contains $\nu^0_1,\dots,\nu^0_s$.
\Statex
\State\label{algoline:lambdamu0} $\lambda^0 \gets {\tilde\nu}^0_1 = (\nu^0_1,0,\dots,0)$, \; $\mu^0 \gets (\nu^0_2,\dots,\nu^0_s)$
\State\label{algoline:lambdamu} $\lambda \gets {\tilde\nu}_1 = (\nu_1,0,\dots,0)$, \; $\mu \gets (\nu_2,\dots,\nu_s)$
\Statex
\State\label{algoline:N} $\CN \gets$ normal vectors $X$, with integer coefficients, of the admissible hyperplanes $H\in\CA(\Psi)$
\State\label{algoline:epsdelta}$(\varepsilon,\delta) \gets $ a point in the interior of $C_{G,K}^\Sigma \setminus \bigcup_{F \in \CF_\Sigma} F$, rescaled such that $\lvert \langle \overline{w(\varepsilon)},  X \rangle - \langle \delta, X \rangle \rvert < \frac12$ for all $X\in\CN$, $w\in S_M$
\State $(\lambda^1, \mu^1) \gets (\lambda^0, \mu^0) + (\varepsilon, \delta)$
\Statex
\State\label{algoline:g0} $g \gets 0$
\State\label{algoline:q} $q \gets$ index of $\Psi\setminus\Delta_\k^+$ in $\Lambda_K$
\State\label{algoline:Y} $Y \gets (Y_2,\dots,Y_s)$, where $Y_j = \left((n_j-1),(n_j-3),\dots,1-n_j\right) \prod_{i=2}^j n_j$
\ForAll{$[w] \in S_M / S_{\{n_1+1,\dots,M\}}$}
  \State\label{algoline:polari} $\Psi_{w,\uu} \gets \{\hphantom{-}\overline{w(\alpha)}, \ \alpha \in \Delta_\uu, \ \la\overline{w(\alpha)},Y\ra >0\}$
  \Statex \qquad\qquad $\cup\; \{-\overline{w(\alpha)}, \ \alpha \in \Delta_\uu, \ \la\overline{w(\alpha)},Y\ra <0\}$
  \ForAll{$\gamma \in \Gamma_K/q\Gamma_K$}
    \State\label{algoline:Psigamma} $\Psi_{w,\gamma,\uu} \gets \{ \psi \in \Psi_{w,\uu} \setminus \Delta^+_\k, e^{\frac{2i\pi}q\la\psi,\gamma\ra}=1 \}$
    \ForAll{$\overrightarrow\sigma \in \mathcal{OS}(\Psi_{w,\gamma,\uu}, \a(\overline{w(\lambda^1)} - \mu^1))$}\label{algoline:OS}
      \State\label{algoline:S} $S^{\Sigma, w}_{\lambda,\mu}(z) \gets \Bigl( \textstyle\prod_{\beta\in \Delta_\k^+}(1-e^{-\la \beta,z\ra}) \Bigr) \frac{e^{\la{\overline{w(\lambda)}-\mu,z\ra}}}{\prod_{\alpha \in \Delta_{\uu}}(1-e^{-{\la\overline{w(\alpha)},z\ra}})}$
      \State\label{algoline:res} $g \gets g + \Res_{\overrightarrow\sigma} S^{\Sigma,w}_{\lambda,\mu}(z+\frac{2i\pi\gamma}{q})$
    \EndFor
  \EndFor
\EndFor
\State\label{algoline:return g} \Return $g$
\end{algorithmic}

\medskip

% \begin{remark}\label{rem:algo notation}
  We now recall the notation used above. We give references to the main text and comment in more detail on how we compute some of the mathematical objects involved.
  As we go along, we will also explain the correctness of our algorithm.

  Throughout, we use the conventions and identifications fixed at the beginning of Section~\ref{sec:setup}.
  Let $G=U(M)$ and $K=SU(n_2)\times\dots\times SU(n_s)$, with systems of positive roots $\Delta_\g^+$ and $\Delta_\k^+$.
  Moreover, we order the list $\Psi$ of restricted positive roots as in Remark~\ref{ex:kron restri roots}.

  In lines~\ref{algoline:lambdamu0}--\ref{algoline:lambdamu}, we think of $\lambda^0,\lambda$ as highest weights in $P\Lambda^\Sigma_{G,\geq0}$ and $\mu^0,\mu$ as highest weights in $P\Lambda_{K,\geq0}$.
  The objects $\lambda^0,\mu^0$ have integer coefficients, while $\lambda,\mu$ are expressed in terms of symbolic variables $\nu_1,\dots,\nu_s$.
  As discussed in the introduction, see Eq.~\eqref{eq:kron via branching}, the Kronecker coefficient $g(\nu^0_1,\dots,\nu^0_s)$ is equal to the branching multiplicity $m_{G,K}(\lambda^0,\mu^0)$.

  If $n_1<M-1$, then the highest weights $\lambda^0$ and $\lambda$ are singular for $G=U(M)$.
  Let $\Sigma=\{\alpha_{n_1+1},\dots,\alpha_{M-1}\}$, where $\alpha_1,\dots,\alpha_{M-1}$ denote the simple roots of $G$.
  % Then $(\lambda^0,H_\alpha)=(\lambda,H_\alpha)=0$ for the coroots $H_\alpha$ with $\alpha\in\Sigma$.
  Then $(\lambda^0,\mu^0)$ and $(\lambda,\mu)$ are in $\Lambda_{G,K,\geq 0}^\Sigma$.
  Recall that $\Delta_\uu = \Delta_\g^+ \setminus \Delta_\lp^+$, where $\Delta_\lp^+$ denotes the system of positive roots corresponding to the simple roots in $\Sigma$.
  We can identify $W_\g$ with the permutation group $S_M$ and $W_\lp$ with its subgroup $S_{\{n_1+1,\dots,M\}}$.
  Our assumptions imply that the branching cone $C_{G,K}^\Sigma$ is solid (see Example~\ref{ex:kron solid}).
  % (e.g., we identify weights with integer vectors, the simple roots of $U(n)$ are $\alpha_i = [0,\dots,0,1,-1,0,\dots,0]$, with the one in the $i$-th entry, etc.).

  In lines~\ref{algoline:N}--\ref{algoline:epsdelta}, the sets of hyperplanes $\CA(\Psi)$ and $\CF_\Sigma$ are defined in Definitions~\ref{def:admissible etc} and \ref{def:tope for F Sigma}, respectively.
  The element $(\varepsilon,\delta)$ can be easily obtained from the inequalities of the Kirwan cone $C^\Sigma_{G,K}$, which we know in the cases that we are interested in.
  (Alternatively, it can be obtained with probability one by mapping a random unit vector in $\CH=\C^{n_1}\otimes\dots\otimes\C^{n_s}$ under the moment map, rotating it into the positive Weyl chamber, and rescaling it appropriately; cf.\ Eq.~\eqref{eq:kirwan cone}.)
  In our implementation, we precompute $(\varepsilon,\delta)$ once and reuse it in later invocations of the algorithm with the same $n_1,\dots,n_s$.

  Since each $\la \overline{w(\lambda^0)}, X \ra - \la \mu^0, X \ra$ is an integer, it is clear that $(\lambda^0,\mu^0) + t (\varepsilon,\delta)$ stays in the same tope $\tau_\Sigma$ for all $0<t\leq1$.
  In particular, $(\lambda^0,\mu^0)\in\overline \tau_\Sigma$, and if $(\lambda^0,\mu^0)\in C_{G,K}^\Sigma$ then $(\lambda^1,\mu^1)\in C_{G,K}^\Sigma$.
  (If $(\lambda^0,\mu^0)$ is already regular then the deformation is not necessary, but harmless, as it does not change the tope.)
  We may thus use Theorem~\ref{theo:branch} to compute a quasi-polynomial formula for the branching multiplicity $m_{G,K}(\lambda,\mu)$ on the closure of tope $\tau_\Sigma$.

  In lines~\ref{algoline:g0}--\ref{algoline:return g}, we implement the optimized method discussed in Remark~\ref{rem:opti}.
  We recall some of the notation involved.
  Throughout, $\overline{\psi}$ denotes the restriction of some $\psi\in i\t_\g^*$ to $i\t_\k$, which in the present case is given by the transpose of the embedding~\eqref{eq:kron embedding}.
  In lines~\ref{algoline:q} and \ref{algoline:Psigamma}, we write $\setminus$ for the difference of lists (observing multiplicities).
  The index $q$ on line~\ref{algoline:q} only depends on $n_1,\dots,n_s$.
  In our implementation, we compute it once by brute force and reuse the result for future computations.
  The element $Y$ defined in line~\ref{algoline:Y} is a regular element compatible with $\Delta_\g^+$ and $\Delta_\k^+$ (see Remark~\ref{ex:kron restri roots}).
  Thus $\Psi_{w,\u}$ in line~\ref{algoline:polari} is the set of polarized $\overline{w(\Delta_u)}$, as in the proof of Theorem~\ref{theo:branch}, and $\Psi_{w,\gamma,\u}$ the optimized subset defined in Remark~\ref{rem:opti}.
  To compute the OS bases adapted to the $\Psi_{w,\gamma,\u}$-regular element $\overline{w(\lambda^1)}-\mu^1$ on line~\ref{algoline:OS}, we use the recursive algorithm described in~\cite[Section 4.9.6]{BV2015}.
  Finally, we note that $S_{\lambda,\mu}^{\Sigma,w}$ in line~\ref{algoline:S} is a symbolic function in the variables $z$ and $\nu_1,\dots,\nu_s$ (through $\lambda,\mu$).
  Thus the residue in line~\ref{algoline:res}, and therefore the result returned by the algorithm on line~\ref{algoline:return g}, is a symbolic function in the variables $\nu_1,\dots,\nu_s$.
% \end{remark}

In view of the preceding considerations, we obtain the following theorem:

\begin{theorem}\label{theo:correct}
  The algorithm described above computes a quasi-po\-ly\-no\-mi\-al formula in the variables $\nu_1,\dots,\nu_s$ that agrees with the Kronecker coefficient $g(\nu_1,\dots,\nu_s)$ on a closed polyhedral cone which contains the point $\nu^0_1,\dots,\nu^0_s$ (namely, on the cone defined by $(\lambda,\mu)\in\overline{\tau_\Sigma}$, where $\tau_\Sigma$ is the tope described above).
\end{theorem}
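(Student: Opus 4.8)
The plan is to verify that each block of the algorithm realizes, step by step, the constructions of Section~\ref{sec:branch}, so that the returned symbolic function is exactly the quasi-polynomial $p^\Sigma_\tau(\lambda,\mu)$ of Definition~\ref{def:branchfunct}, and then to invoke Theorem~\ref{theo:branch}. First I would fix $G=U(M)$, $K=SU(n_2)\times\dots\times SU(n_s)$ with the compatible positive systems and the ordered list $\Psi$ of restricted positive roots of Example~\ref{ex:kron restri roots}, and record that under the stated assumptions the standing hypotheses of the theory hold: $s\ge3$ with all $n_j\ge2$ guarantees that no non-zero ideal of $\k$ is an ideal of $\g$; all restricted roots in $\Psi$ are non-zero (Example~\ref{ex:kron restri roots}), so we are in the ``regular'' case of Section~\ref{regular}; and the cone $C^\Sigma_{G,K}$ is solid, by Example~\ref{ex:kron solid}, because the assumption $\nu^0_{1,2}>0$ forces $\lambda^0=\tilde\nu^0_1$ to have at least two non-zero coordinates, while $\Sigma=\{\alpha_{n_1+1},\dots,\alpha_{M-1}\}$ only imposes vanishing of the trailing coordinates. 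By Eq.~\eqref{eq:kron via branching}, using $\lvert\nu^0_1\rvert=\dots=\lvert\nu^0_s\rvert$, together with Lemma~\ref{lem:Cauchy reduction}, we have $g(\nu^0_1,\dots,\nu^0_s)=m_{G,K}(\lambda^0,\mu^0)$, and likewise $g(\nu_1,\dots,\nu_s)=m_{G,K}(\lambda,\mu)$ as an identity of symbolic functions in $\nu_1,\dots,\nu_s$ under the linear parametrization $\lambda=\tilde\nu_1$, $\mu=(\nu_2,\dots,\nu_s)$.

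Next I would analyze the deformation on lines~\ref{algoline:N}--\ref{algoline:epsdelta}. For every $X\in\CN$ and every $w\in S_M$, the quantity $\langle\overline{w(\lambda^0)},X\rangle-\langle\mu^0,X\rangle$ is an integer, whereas the rescaling guarantees $\lvert\langle\overline{w(\varepsilon)},X\rangle-\langle\delta,X\rangle\rvert<\tfrac12$; hence for $0<t\le1$ the point $(\lambda^0,\mu^0)+t(\varepsilon,\delta)$ never lies on a hyperplane of $\CF_\Sigma$, so the whole segment — and in particular the regular point $(\lambda^1,\mu^1)$ — lies in a single tope $\tau_\Sigma$ for $\CF_\Sigma$, with $(\lambda^0,\mu^0)\in\overline{\tau_\Sigma}$. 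Since $(\varepsilon,\delta)$ lies in the interior of the convex cone $C^\Sigma_{G,K}$: if $(\lambda^0,\mu^0)\in C^\Sigma_{G,K}$ then $(\lambda^1,\mu^1)$ lies in its interior, so $\tau_\Sigma$ meets $C^\Sigma_{G,K}$ and hence $\overline{\tau_\Sigma}\cap i\t^*_{\g,\k,\Sigma,\ge0}\subseteq C^\Sigma_{G,K}$, and case~(ii) of Theorem~\ref{theo:branch} gives $m_{G,K}(\lambda,\mu)=p^\Sigma_\tau(\lambda,\mu)$ throughout; if $(\lambda^0,\mu^0)\notin C^\Sigma_{G,K}$ then every point of $\overline{\tau_\Sigma}\cap\Lambda^\Sigma_{G,K,\ge0}$ for which the conclusion is needed lies outside $C^\Sigma_{G,K}$ as well, and case~(i) gives $m_{G,K}(\lambda,\mu)=p^\Sigma_\tau(\lambda,\mu)=0$. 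In both situations the agreement holds on $\overline{\tau_\Sigma}\cap\Lambda^\Sigma_{G,K,\ge0}$, where $\tau$ is the unique tope for $\CF$ containing $\tau_\Sigma$; this is precisely the cone asserted in the statement.

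It then remains to check that the triple loop on lines~\ref{algoline:g0}--\ref{algoline:return g} outputs $p^\Sigma_\tau(\lambda,\mu)$ in the optimized form of Remark~\ref{rem:opti}. Here I would note that $\CW_\g/\CW_\lp$ is represented by $S_M/S_{\{n_1+1,\dots,M\}}$ under the identifications of Section~\ref{sec:setup}; that the choice of $q$ on line~\ref{algoline:q} as the index of $\Psi\setminus\Delta_\k^+$ is legitimate by the last paragraph of Remark~\ref{rem:opti}; that the element $Y$ on line~\ref{algoline:Y} is a regular element compatible with $\Delta_\g^+$ and $\Delta_\k^+$ (Example~\ref{ex:kron restri roots}), so that line~\ref{algoline:polari} produces exactly the polarized system $\Psi_{w,\uu}$ from the proof of Theorem~\ref{theo:branch} and line~\ref{algoline:Psigamma} the sublist $\Psi_{w,\gamma,\uu}$ of Remark~\ref{rem:opti}; and that $\a^\tau_w=\a(\overline{w(\lambda^1)}-\mu^1)$ by Definition~\ref{def:induced tope}, since $(\lambda^1,\mu^1)\in\tau_\Sigma\subseteq\tau$. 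Thus the OS bases enumerated on line~\ref{algoline:OS} are exactly $\mathcal{OS}(\Psi_{w,\gamma,\uu},\a^\tau_w)$ (the tope $\a^\tau_w$ being read, per the list in question, as the $\Psi_{w,\gamma,\uu}$-tope containing the $\Psi$-tope $\a^\tau_w$), and summing $\Res_{\overrightarrow\sigma}S^{\Sigma,w}_{\lambda,\mu}(z+\tfrac{2i\pi\gamma}{q})$ over them and over $[w]$ and $\gamma$, Lemma~\ref{lem:OS bases} identifies the result with $p^\Sigma_\tau(\lambda,\mu)$ of Definition~\ref{def:branchfunct}, since near $z=0$ the only poles of $z\mapsto S^{\Sigma,w}_{\lambda,\mu}(z+\tfrac{2i\pi\gamma}{q})$ come from $\Psi_{w,\gamma,\uu}\setminus\Delta_\k^+$. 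Chaining this with the previous two paragraphs yields the theorem. There is no single deep step; the part of the argument I expect to require the most care is this last one — reassembling Remark~\ref{rem:opti} and Lemma~\ref{lem:OS bases} into a clean statement that the printed loops compute $p^\Sigma_\tau$, keeping precise track of which list a tope and its adapted OS bases refer to — while the deformation argument and the translation $g\leftrightarrow m_{G,K}$ are comparatively routine.
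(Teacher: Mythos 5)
Your proof is correct and follows the same route as the paper's own justification, which is given not as a formal proof environment but as the line-by-line commentary preceding the theorem statement in Appendix~A: identify $G=U(M)$, $K=SU(n_2)\times\dots\times SU(n_s)$ and $\Sigma$, check via Example~\ref{ex:kron solid} that $C^\Sigma_{G,K}$ is solid, observe that the $<\tfrac12$ deformation against integer pairings keeps $(\lambda^0,\mu^0)+t(\varepsilon,\delta)$ in a single tope $\tau_\Sigma$ with $(\lambda^0,\mu^0)\in\overline{\tau_\Sigma}$, and then match the three nested loops against $p^\Sigma_\tau$ of Definition~\ref{def:branchfunct} using the polarization step from the proof of Theorem~\ref{theo:branch} together with the optimizations of Remark~\ref{rem:opti} and Lemma~\ref{lem:OS bases}. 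Your account is, if anything, slightly more careful than the paper's on the case $(\lambda^0,\mu^0)\notin C^\Sigma_{G,K}$ and on verifying the ``regular case'' and ``no common ideal'' hypotheses explicitly.
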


\begin{remark}\label{rem:algo assumptions}
We note that the assumptions of our algorithm are without loss of generality.
The Kronecker coefficient for less than three factors is trivial to compute (e.g., $g(\nu_1,\nu_2)=\delta_{\nu_1,\nu_2}$), so we may assume that $s\geq3$.
We have $g(\nu_1,\dots,\nu_s)=0$ unless $\lvert\nu_1\rvert=\dots=\lvert\nu_s\rvert$.
Moreover, we can always remove Young diagrams with a single row, hence assume that $n_1,\dots,n_s\geq2$ and that $\nu^0_{1,2}>0$.
Lastly, we may use Lemma~\ref{lem:Cauchy reduction} to reduce to the case that $M\geq n_1\geq n_2\geq\dots\geq n_s$.
\end{remark}

\begin{remark}\label{rem:algo variants}
  There are various useful variants of our algorithm.
  If instead of using general symbolic variables $\nu_j$ we choose $\nu_j = k \nu_j^0$, where $k$ is a variable, then our algorithm computes the dilated Kronecker coefficient $k\mapsto g(k\nu_1^0,\dots,k\nu_s^0)$ as a quasi-polynomial in $k$.
  If we choose $\nu_j = \nu_j^0$ then our algorithm computes the numerical value of the Kronecker coefficient $g(\nu_1^0,\dots,\nu_s^0)$.
  As in~\cite{C-D-W}, for fixed $n_1,\dots,n_s$ both algorithms run in polynomial time (where the integers $\nu_{1,1},\dots,\nu_{s,n_s}$ are encoded in binary).

  If $\nu_1^0$ is rectangular, then the corresponding highest weight $\lambda=\tilde\nu_1^0$ is even more singular.
  Thus we may work with a larger set $\Sigma$.
  This is highly useful in practice, as it reduces the number of roots in $\Delta_\uu$ as well as the cosets of permutations $[w]$ that we need to sum over.
  Indeed, in the rectangular case, $\Sigma$ consists of all the simple roots except for one.
  When all $\nu_i$ are rectangular tableaux, this corresponds to the case of Hilbert series, and we have used this optimization for computing the results presented in Section~\ref{subsec:HS}.
  More generally, we can choose $\Sigma$ according to the highest weights that we are interested in, but care is required since the branching cone $C^\Sigma_{G,K}$ needs to be solid.
\end{remark}

A \textsc{Maple} implementation of our algorithm is available at~\cite{Kroneckerwebpage}.
All examples presented in Section~\ref{sec:examples} were computed using our algorithm on a MacBook~Pro (Intel Core~i7 processor). The running time is no more than $20$ minutes for the most difficult cases.
One exception is the example of measures of entanglement for $4$ qubits (see Example~\ref{Wallach.ex}).
This was the most challenging one to compute in terms of running time (but, on the other hand, we did not try to optimize the program for this particular case).

We have also verified our algorithm against computations made by different authors with various theoretical or computational aims (Hilbert series, stability, representations of the symmetric group, etc.).
Here is a list that is likely far from being complete:
\cite{BOR1,C-D-W,Kac,King,LUTHI,MAN1,MAN,Stem,Val,Nolan}.
In contrast to our method, some of these computations use directly the representation theory of the symmetric group (e.g., \cite{King,Stem}).
Thus they work for larger number of rows, provided that the content $\lvert\nu^0_i\rvert$ is small.
In contrast, our algorithm works best in the regime where the number of rows is fixed and not too large, but it is largely insensitive to the content and provides symbolic results.
\end{appendix}

\end{document}